\begin{document}

 \title{J\o rgensen's Inequality and Purely Loxodromic 2--Generator Free Kleinian Groups}

\author{\.Ilker S. Y\"{u}ce}

\newcommand{\noi}{\noindent}
\newcommand{\tb}{\textbf}
\newcommand{\tr}{\textrm}
\newcommand{\pr}{\partial}
\newcommand{\fr}{\frac}
\newcommand{\ra}{\rightarrow}
\newcommand{\Ra}{\Rightarrow}
\newcommand{\lag}{\langle}
\newcommand{\rag}{\rangle}
\newcommand{\vs}{\vspace{.05in}}
\newcommand{\vvs}{\vspace{.05in}}
\newcommand{\ep}{\epsilon}
\newcommand{\de}{\delta}
\newcommand{\dis}{\displaystyle}
\newcommand{\co}{:}
\newcommand{\tnr}{\textnormal}
\newcommand{\C}{\mathbb{C}}
\newcommand{\xs}{\textbf{x}^*}
\newcommand{\R}{\mathbb{R}}
\newcommand{\ys}{\textbf{y}^*}

\numberwithin{equation}{section}
\newtheorem*{Definition}{Definition}
\newtheorem*{Log 3 Theorem}{Log 3 Theorem}
\newtheorem*{thmyuce}{Theorem 2.1}
\newtheorem*{thmyuce2}{Theorem 2}
\newtheorem*{thmyuce3}{Theorem 3}
\newtheorem*{thm}{Theorem}
\newtheorem{Conjecture}{Conjecture}
\newcommand{\hyp}{\mathbb{H}^3}
\newcommand{\alphas}{\alpha_*}
\newcommand{\dseven}{\Delta^7}
\newcommand{\dgammaiprm}{\textrm{dist}(z_0,\ \gamma_i'\cdot z_0)}
\newcommand{\dxiprm}{\textrm{dist}(z_0,\ \xi'\cdot z_0)}
\newcommand{\detaprm}{\textrm{dist}(z_0,\ \eta'\cdot z_0)}
\newcommand{\dxietaprm}{\textrm{dist}(z_0,\ \xi'\eta'\cdot z_0)}
\newcommand{\dxi}{\textrm{dist}(z_0,\ \xi\cdot z_0)}
\newcommand{\deta}{\textrm{dist}(z_0,\ \eta\cdot z_0)}
\newcommand{\dxieta}{\textrm{dist}(z_0,\ \xi\eta\cdot z_0)}
\newcommand{\dxiinveta}{\textrm{dist}(z_0,\ \xi^{-1}\eta\cdot z_0)}
\newcommand{\detaxiinv}{\textrm{dist}(z_0,\ \eta\xi^{-1}\cdot z_0)}
\newcommand{\dxiinvetainv}{\textrm{dist}(z_0,\ \xi^{-1}\eta^{-1}\cdot z_0)}
\newcommand{\dgamma}{\textrm{dist}(z_0,\ \gamma\cdot z_0)}
\newcommand{\dgammaprm}{\textrm{dist}(z_0,\ \gamma'\cdot z_0)}
\newcommand{\dgammainv}{\textrm{dist}(z_0,\ \gamma^{-1}\cdot z_0)}
\newcommand{\dxietainv}{\textrm{dist}(\xi\cdot z_0,\ \eta^{-1}\cdot z_0)}
\newcommand{\detainvxiinv}{\textrm{dist}(\eta^{-1}\cdot z_0,\ \xi^{-1}\cdot z_0)}
\newcommand{\dxiinvxi}{\textrm{dist}(\xi^{-1}z_0,\ \xi\cdot z_0)}
\newcommand{\detainveta}{\textrm{dist}(\eta^{-1}\cdot z_0,\ \eta\cdot z_0)}
\newcommand{\dxisqr}{\textrm{dist}(z_0,\ \xi^{2}\cdot z_0)}
\newcommand{\detasqr}{\textrm{dist}(z_0,\ \eta^{2}\cdot z_0)}
\newcommand{\ddxiinveta}{\textrm{dist}(\xi^{-1}\cdot z_0,\ \eta\cdot z_0)}
\newcommand{\ddxieta}{\textrm{dist}(\xi\cdot z_0,\ \eta\cdot z_0)}
\newcommand{\ddetainvxiinv}{\textrm{dist}(\eta^{-1}\cdot z_0,\ \xi^{-1}\cdot z_0)}
\newcommand{\jrg}{\textnormal{\o}}
\newcommand{\dxietaxiinv}{\textrm{dist}(z_0,\ \xi\eta\xi^{-1}\cdot z_0)}
\newcommand{\detaxietainv}{\textrm{dist}(z_0,\ \eta\xi\eta^{-1}\cdot z_0)}
\newcommand{\dgammaa}{\textrm{dist}(z_1,\ \gamma\cdot z_1)}                
\newcommand{\dgammaz}{\textrm{dist}(z,\ \gamma\cdot z)} 
\newcommand{\dgammatz}{\textrm{dist}(z,\ \widehat{\gamma}\cdot z)}
\newcommand{\dgammatzZ}{\textrm{dist}(z_0,\ \widehat{\gamma}\cdot z_0)}
\newcommand{\dgammaj}{\textrm{dist}(z_0,\ \gamma_j\cdot z_0)}              
\newcommand{\dzetak}{\textrm{dist}(z_0,\ \zeta_k\cdot z_0)}                
\newcommand{\ddeltar}{\textrm{dist}(z_0,\ \delta_r\cdot z_0)}              
\newcommand{\dxizone}{\textrm{dist}(z_1,\ \xi\cdot z_1)}                   %
\newcommand{\dpsi}{\textrm{dist}(z,\ \psi\cdot z)}                         
\newcommand{\dpsiz}{\textrm{dist}(z_0,\ \psi\cdot z_0)}                      
\newcommand{\dpsizinv}{\textrm{dist}(z_0,\ \psi_0^{-1}\cdot z_0)}              
\newcommand{\dphi}{\textrm{dist}(z,\ \phi\cdot z)}                         
\newcommand{\dxiinv}{\textrm{dist}(z_0,\ \xi^{-1}\cdot z_0)}               
\newcommand{\detazone}{\textrm{dist}(z_1,\ \eta\cdot z_1)}                 %
\newcommand{\detainv}{\textrm{dist}(z_0,\ \eta^{-1}\cdot z_0)}             
\newcommand{\dpsiphi}{\textrm{dist}(z_0,\ \psi\phi\cdot z)}                
\newcommand{\detaxi}{\textrm{dist}(z_0,\ \eta^{-1}\xi^{-1}\cdot z_0)}      
\newcommand{\detantwo}{\textrm{dist}(z_0,\ \eta^{-2}\cdot z_0)}            
\newcommand{\detanonexi}{\textrm{dist}(z_0,\ \eta^{-1}\xi\cdot z)}         
\newcommand{\dxitwo}{\textrm{dist}(z_0,\ \xi^{2}\cdot z_0)}                
\newcommand{\dxintwo}{\textrm{dist}(z_0,\ \xi^{-2}\cdot z_0)}              
\newcommand{\dxietanone}{\textrm{dist}(z_0,\ \xi\eta^{-1}\cdot z_0)}       
\newcommand{\detaxinone}{\textrm{dist}(z_0,\ \eta\xi^{-1}\cdot z_0)}       
\newcommand{\disetaxi}{\textrm{dist}(z_0,\ \eta\xi\cdot z_0)}              
\newcommand{\disetatwo}{\textrm{dist}(z_0,\ \eta^2\cdot z_0)}              
\newcommand{\disxiinvetainv}{\textrm{dist}(z_0,\ \xi^{-1}\eta^{-1}\cdot z_0)}  
\newcommand{\disxiinveta}{\textrm{dist}(z_0,\ \xi^{-1}\eta\cdot z_0)}          
\newcommand{\disbetaij}{\textrm{dist}(z_0,\ \beta_{i,j}\cdot z_0)}             
\newcommand{\disgammaij}{\textrm{dist}(z_0,\ \gamma_{i,j}\cdot z_0)}           
\newcommand{\diszetaij}{\textrm{dist}(z_0,\ \zeta_{i,k}\cdot z_0)}             
\newcommand{\disupsilonij}{\textrm{dist}(z_0,\ \upsilon_{i,j}\cdot z_0)}       
\newcommand{\hype}{\overline{\mathbb{H}}^3}

\maketitle

\begin{abstract}    
Let $\xi$ and $\eta$ be two non--commuting isometries of the hyperbolic $3$--space $\mathbb{H}^3$ so that  $\Gamma=\langle\xi,\eta\rangle$ is a purely loxodromic free Kleinian group. For $\gamma\in\Gamma$ and $z\in\hyp$, let $d_{\gamma}z$ denote the distance between $z$ and $\gamma\cdot z$. Let $z_1$ and $z_2$ be the mid-points of the shortest geodesic segments connecting the axes of $\xi$, $\eta\xi\eta^{-1}$ and $\eta^{-1}\xi\eta$, respectively. In this manuscript it is proved that if $d_{\gamma}z_2<1.6068...$ for every $\gamma\in\{\eta, \xi^{-1}\eta\xi, \xi\eta\xi^{-1}\}$ and $d_{\eta\xi\eta^{-1}}z_2\leq d_{\eta\xi\eta^{-1}}z_1$, then
$$
|\textnormal{trace}^2(\xi)-4|+|\textnormal{trace}(\xi\eta\xi^{-1}\eta^{-1})-2|\geq 2\sinh^2\left(\tfrac{1}{4}\log\alpha\right) = 1.5937....
$$ 
Above $\alpha=24.8692...$ is the unique real root of the polynomial 
$21 x^4 - 496 x^3 - 654 x^2 + 24 x + 81$ that is greater than $9$. Also generalisations of this inequality for finitely generated purely loxodromic free Kleinian groups are conjectured.
\end{abstract}

\maketitle


\section{Introduction}

Let $\xi$ and $\eta$ be two non-commuting isometries of $\hyp$ represented by $A$ and $B$ in PSL($2,\mathbb{C}$), respectively. Since $A$ and $B$  are determined up to a factor of $-1$, the product $ABA^{-1}B^{-1}$ is uniquely determined by these two isometries. Therefore, in the rest of this text we will write $\tnr{trace}^2(\xi)$ and $\tnr{trace}(\xi\eta\xi^{-1}\eta^{-1})$ in the places of  $\tnr{trace}^2(A)$ and $\tnr{trace}(ABA^{-1}B^{-1})$, respectively, without any confusion.  

In his well-known result, called the J{\o}rgensen's inequality, J{\o}rgensen \cite{JT} proved the statement below:

\begin{theorem*} 
If $\langle\xi,\eta\rangle$ is a Kleinian group then, the lower bound being the best possible, 
\begin{equation}\label{eqn:1:1}
|\textnormal{trace}^2(\xi)-4|+|\textnormal{trace}(\xi\eta\xi^{-1}\eta^{-1})-2|\geq 1.
\end{equation}
\end{theorem*}

\noindent An immediate application of this result on hyperbolic displacements was given by Beardon in \cite[Theorem 5.4.5]{B}. The work in this paper is mainly motivated by this theorem:

\begin{theorem*}
Suppose that $\langle \xi,\eta\rangle$ is a Kleinian group. If $\xi$ is elliptic or strictly loxodromic so that  $|\tnr{trace}^2(\xi)-4|<\tfrac{1}{4}$, then for any $z$ in $\hyp$ we have $$\max\{\sinh(\tfrac{1}{2}d_{\xi}z),\ \sinh(\tfrac{1}{2}d_{\eta\xi\eta^{-1}}z)\}\geq\tfrac{1}{4}.$$
\end{theorem*}
 
Due to an extension introduced in  \cite{Y1} and \cite{Y2} by the author,  the machinery developed by Culler and Shalen in \cite{CSParadox} allows one to compute a lower bound for the maximum of hyperbolic displacements under any finite set of isometries in  a purely loxodromic finitely generated free Kleinian group $\Gamma$. In particular in the case of 2-generator, eg if $\Gamma=\langle\xi,\eta\rangle$, it is possible to compute a lower bound for the maximum of the hyperbolic displacements given by the set $\Gamma_{\jrg}$ of isometries
\begin{equation}\label{conj}
\Phi_1\cup\{\xi\eta\xi^{-1},\xi^{-1}\eta\xi,\eta\xi\eta^{-1},\eta^{-1}\xi\eta,\xi\eta^{-1}\xi^{-1},\xi^{-1}\eta^{-1}\xi,\eta\xi^{-1}\eta^{-1},\eta^{-1}\xi^{-1}\eta\},
\end{equation}
where $\Phi_1=  \{\xi,\eta,\eta^{-1},\xi^{-1}\}$. 
Explicitly we shall first establish the following statement: 

\medskip
\noindent {\bf \fullref{thm:4:1}}\quad
{\sl 
Suppose that $\Gamma=\langle\xi,\eta\rangle$ is a purely loxodromic free Kleinian group. 
Then, for $\Gamma_{\jrg}$ in (\ref{conj}), we have $\max\nolimits_{\gamma\in\Gamma_{\jrg}}\left\{d_{\gamma}z\right\}\geq
1.6068...$ for any $z\in\hyp$.
}
\medskip

\noindent Let $z_1$ and $z_2$ be the mid-points of the shortest geodesic segments connecting the axes of $\xi$, $\eta\xi\eta^{-1}$ and $\eta^{-1}\xi\eta$, respectively.
Then we will show that the theorem above implies that

\medskip
\noindent {\bf \fullref{thm:4:2}}\quad
{\sl If $d_{\gamma}z_2<1.6068...$ for $\gamma\in\Phi_2=\{\eta,  \xi^{-1}\eta\xi, \xi\eta\xi^{-1}\}$ and $d_{\eta\xi\eta^{-1}}z_2\leq d_{\eta\xi\eta^{-1}}z_1$, then we have $|\textnormal{trace}^2(\xi)-4|+|\textnormal{trace}(\xi\eta\xi^{-1}\eta^{-1})-2| \geq 1.5937....$}
\medskip

The proof of \fullref{thm:4:2} will involve the computations given in the proof of Theorem 5.4.5 in \cite{B} which uses  the geometry of the action of loxodromic isometries together with some elementary inequalities involving hyperbolic trigonometric functions. But most of the work in this paper will be required to prove \fullref{thm:4:1}. We start by reviewing briefly the necessary ingredients used in the proof of \fullref{thm:4:1} including a summary of the Culler--Shalen machinery introduced in \cite{CSParadox}. 

Let us define $\Psi$ as the set of isometries in $\Gamma=\langle\xi,\eta\rangle$ whose elements are listed and enumerated below:
\begin{equation}\label{list:1:4}
\scalebox{1}{$
\begin{array}{llllllll}
\xi\eta^{-1}\xi^{-1} & \mapsto  1,  & \eta^{-1}\xi^{-1}\eta^{-1} & \mapsto  8,   & \eta\xi^{-1}\eta^{-1}                         & \mapsto  15,     & \xi^{-1}\eta^{-1}\xi^{-1}  &\mapsto 22,\\
\xi\eta^{-1}\xi         & \mapsto 2,  & \eta^{-1}\xi^{-1}\eta         & \mapsto  9,   & \eta\xi^{-1}\eta & \mapsto  16,     & \xi^{-1}\eta^{-1}\xi &\mapsto  23,\\
\xi\eta^{-2}             & \mapsto  3,  & \eta^{-1}\xi^{-2}               & \mapsto 10,  & \eta\xi^{-2}           & \mapsto  17,     & \xi^{-1}\eta^{-2}  &\mapsto  24,\\
\xi\eta^{2}              & \mapsto  4,   & \eta^{-1}\xi^{2}                & \mapsto 11,   & \eta\xi^2                   & \mapsto  18,     & \xi^{-1}\eta^2  &\mapsto  25,\\
\xi\eta\xi^{-1}        &\mapsto  5,     & \eta^{-1}\xi\eta^{-1}        & \mapsto 12,   & \eta\xi\eta^{-1}                  & \mapsto  19,     & \xi^{-1}\eta\xi^{-1}  &\mapsto  26,\\
\xi\eta\xi                &\mapsto  6,     & \eta^{-1}\xi\eta                & \mapsto 13,   & \eta\xi\eta         & \mapsto  20,     & \xi^{-1}\eta\xi &\mapsto 27,\\
\xi^2                      & \mapsto  7,    & \eta^{-2}                         & \mapsto  14,   & \eta^2                  & \mapsto  21,     & \xi^{-2}           &  \mapsto  28.
\end{array}$}
\end{equation}
We shall denote this enumeration by $p\co\Psi\to\{1,\dots,28\}$.
Let $\Psi_r=\Phi_1=\{\xi,\eta^{-1},\eta,\xi^{-1}\}$. Since it is assumed that $\Gamma=\langle\xi,\eta\rangle$ is free, it can be decomposed as follows:
\begin{equation}\label{dJ}
\Gamma=\{1\}\cup\Psi_r\cup\bigcup_{\psi\in\Psi}J_{\psi},
\end{equation}
where $J_{\psi}$ denotes the set of all words starting with the word $\psi\in\Psi$. We will name this decomposition $\Gamma_{\mathcal{D}}$. Let us define $J_{\Phi}=\cup_{\psi\in\Phi}J_{\psi}$ for $\Phi\subseteq\Psi$. A group--theoretical relation for a given decomposition of $\Gamma=\langle\xi,\eta\rangle$ is a relation among the sets $J_{\psi}$. As an example,  
\begin{equation}\label{example}
\xi\eta\xi^{-1}J_{\xi\eta^{-1}\xi^{-1}}=\Gamma-\left(\{\xi\}
\cup J_{\{\xi^2,\xi\eta^{-1}\xi^{-1},\xi\eta^{-1}\xi,\xi\eta^{-2},\xi\eta^2,\xi\eta\xi^{-1},\xi\eta\xi\}}\right)
\end{equation}
is a group--theoretical relation of the decomposition in (\ref{dJ}) which indicates that when multiplied on the left by $\xi\eta\xi^{-1}$ the set of words in $\Gamma=\langle\xi,\eta\rangle$ starting with $\xi\eta^{-1}\xi^{-1}$ translates into the set of words starting with the words whose initial letters are different than $\xi$. Isometries in $\Psi_r$ which appear in the relations have no effect in the upcoming computations. Therefore, we shall denote a generic group--theoretical relation of $\Gamma_{\mathcal{D}}$ by $(\gamma, s(\gamma), S(\gamma))$, where $\gamma\in\Gamma_{\o}$, $s(\gamma)\in\Psi$ and $S(\gamma)\subset\Psi$. In (\ref{example}) we have $$\gamma=\xi\eta\xi^{-1},\ s(\gamma)=\xi\eta^{-1}\xi^{-1},\  S(\gamma)=\{\xi^{2},\xi\eta^{-1}\xi^{-1},\xi\eta^{-1}\xi,\xi\eta^{-2},\xi\eta^2,\xi\eta\xi^{-1},\xi\eta\xi\}.$$ 
There are $128$ group--theoretical relations for $\Gamma_{\mathcal{D}}$ in total. But we will be interested in $60$ of them listed in \fullref{lem:2:1} (see \fullref{Table1}, \fullref{Table2}, \fullref{Table3} and \fullref{Table4}) for which $\gamma\in\Gamma_{\jrg}\subset\Psi_r\cup\Psi$ defined in (\ref{conj}). Then we consider the cases:
\begin{enumerate}[label=\roman*]
    \renewcommand{\labelenumi}{\roman{enumi}}
       \item\hspace{-.1cm}.\label{I}  when $\Gamma=\langle\xi,\eta\rangle$ is geometrically infinite; that is, $\Lambda_{\Gamma\cdot z}=S_{\infty}$ for every $z\in\hyp$,
       \item\hspace{-.1cm}.\label{II} when $\Gamma=\langle\xi,\eta\rangle$ is geometrically finite.
\end{enumerate}
Above the expression $S_{\infty}$ denotes the boundary of the canonical compactification $\overline{\hyp}$ of $\hyp$. Note that $S_{\infty}\cong S^2$. The notation $\Lambda_{\Gamma\cdot z}$ means the limit set of $\Gamma$--orbit of $z\in\hyp$ on $S_{\infty}$. In the case (\ref{I}) we first prove the statement below:

\medskip
\noindent {\bf \fullref{thm:2:1}}\quad
{\sl Let $\Gamma=\langle\xi,\eta\rangle$ be a purely loxodromic, free, geometrically infinite Kleinian group and $\Gamma_{\mathcal{D}}$ be the decomposition of $\Gamma$ in (\ref{dJ}). If $z$ denotes a point in $\hyp$, then there is a family of Borel measures $\{\nu_{\psi}\}_{\psi\in\Psi}$ defined on $S_{\infty}$ such that we have $(i)\ \ A_{z}=\sum_{\psi\in\Psi}\nu_{\psi}$;  $(ii)\ \ A_{z}(S_{\infty})=1$; and for $\gamma\in\Gamma_{\jrg}$
$$(iii)\quad\dis{\int_{S_{\infty}}\left(\lambda_{\gamma,z}\right)^2d\nu_{s(\gamma)}=1-\sum_{\psi\in S(\gamma)}\int_{S_{\infty}} d\nu_{\psi}}$$ for all group--theoretical relations $(\gamma, s(\gamma), S(\gamma))$ of $\Gamma_{\mathcal{D}}$, where $A_{z}$ is the area measure on $S_{\infty}$ based at $z$. }
\medskip


This theorem basically states that the normalised area measure $A_{z}$ on the sphere at infinity can be decomposed as a sum of Borel measures $\nu_{\psi}$ indexed by $\psi\in\Psi$ so that each group--theoretical relation of $\Gamma_{\mathcal{D}}$ translates into a measure--theoretical relation among the Borel measures $\{\nu_{\psi}\}_{\psi\in\Psi}$ as described in part ($iii$) of the theorem. In particular, each measure $\nu_{\psi}$ is transformed to the complement of certain measures in the set $\{\nu_{\gamma}\co\gamma\in\Psi-\{\psi\}\}$. For example, \fullref{thm:2:1} ($iii$) and the group--theoretical relation given in (\ref{example}) imply that 
\begin{equation}\label{ex:1:2}
\int_{S_{\infty}}\lambda_{\xi\eta\xi^{-1},z}^2\ d\nu_{\xi\eta^{-1}\xi^{-1}}=1-\sum\nolimits_{\psi\in\{\xi^2,\xi\eta^{-1}\xi^{-1},\xi\eta^{-1}\xi,\xi\eta^{-2},\xi\eta^2,\xi\eta\xi^{-1},\xi\eta\xi\}}\nu_{\psi}(S_{\infty}).
\end{equation}
By a formula proved in \cite{CSParadox} and improved in  \cite{CSMargulis} by Culler and Shalen, each hyperbolic displacement $d_{\gamma}z$ for $\gamma\in\Gamma_{\jrg}$ has a lower bound involving the Borel measures in $\{\nu_{\psi}\}_{\psi\in\Psi}$. This formula is given as follows:
\begin{lemma}\label{lem1.2}(\cite[Lemma 5.5]{CSParadox}; \cite[Lemma 2.1]{CSMargulis}) 
Let $a$ and $b$ be numbers in $[0,1]$ which are not both equal to $0$ and are not both equal to $1$. Let $\gamma$ be a loxodromic isometry of $\hyp$ and let $z$
be a point in $\hyp$. Suppose that $\nu$ is a measure on $S_{\infty}$ such that
(i) $\nu\leq A_{z}$,  (ii) $\nu\left(S_{\infty}\right)\leq a$, (iii) $\int_{S_{\infty}}(\lambda_{\gamma,z})^2d\nu\geq b$.
Then $a>0$, $b<1$, and $$d_{\gamma}z\geq \tfrac{1}{2}\log\frac{\sigma(a)}{\sigma(b)},$$
where $\sigma(x)=1/x-1$ for $x\in(0,1)$. 
\end{lemma}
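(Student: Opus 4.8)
The plan is to reduce the statement to a single sharp real--variable inequality about the conformal density of $\gamma$ on $S_{\infty}$, and then feed in the three hypotheses by monotonicity. First I would normalise by passing to the ball model of $\hyp$ with $z$ placed at the centre, so that $A_z$ becomes the normalised round measure on $S_{\infty}\cong S^2$. Writing $w=\gamma^{-1}\!\cdot z$ (so that $\textrm{dist}(z,w)=d_{\gamma}z=:D$), I would take $g:=\lambda_{\gamma,z}^2=\tfrac{dA_{w}}{dA_z}$ to be the visual (Poisson--type) density, which is positive, has total mass $\int_{S_{\infty}}g\,dA_z=A_{w}(S_{\infty})=1$, and whose values range in $[e^{-2D},e^{2D}]$. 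Setting $f:=\tfrac{d\nu}{dA_z}\in[0,1]$ (which exists by hypothesis (i)), the hypotheses (ii) and (iii) read $\theta:=\int f\,dA_z\le a$ and $\phi:=\int gf\,dA_z\ge b$. Since $\int g\,dA_z=1$ we also have $\int g(1-f)\,dA_z=1-\phi$, so everything is reduced to proving the clean bound $\tfrac{\phi(1-\theta)}{\theta(1-\phi)}\le e^{2D}$.

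The crux is this last inequality, and the main obstacle is that the bounds $g\le e^{2D}$ together with $\int g\,dA_z=1$ are by themselves too weak: a two--valued extremal weight would only yield $e^{4D}$, i.e. the inferior estimate $d_{\gamma}z\ge\tfrac14\log\tfrac{\sigma(a)}{\sigma(b)}$. To recover the sharp exponent one must use the precise form of the visual density. Concretely, each superlevel set $E_c=\{\,g\ge c\,\}$ is a spherical cap, and I would prove the exact identity
$$
\frac{\big(\int_{E_c}g\,dA_z\big)\big(1-A_z(E_c)\big)}{A_z(E_c)\big(1-\int_{E_c}g\,dA_z\big)}=e^{2D}\qquad\textrm{for every cap } E_c,
$$
by writing $g$ as a function of $u=\cos\psi$ (the cosine of the angle to the axis through $z$ and $w$) and evaluating the two elementary integrals; the dependence on the threshold cancels and leaves exactly $\big(\tfrac{1+\tanh(D/2)}{1-\tanh(D/2)}\big)^2=e^{2D}$. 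This constancy over all caps is the special feature of the visual density that produces the factor $\tfrac12$ rather than $\tfrac14$.

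With the identity in hand, the general case follows by a bang--bang (rearrangement) argument: for each fixed mass $\theta$ the quantity $\tfrac{\phi(1-\theta)}{\theta(1-\phi)}$ is increasing in $\phi$, and among all $f\in[0,1]$ with $\int f\,dA_z=\theta$ the integral $\phi=\int gf\,dA_z$ is maximised by the cap indicator of measure $\theta$; hence $\tfrac{\phi(1-\theta)}{\theta(1-\phi)}\le e^{2D}$ for every admissible $f$, with equality on caps, which also yields the sharpness asserted in the statement. Finally I would insert the hypotheses: since $x\mapsto\tfrac{x}{1-x}$ is increasing and $x\mapsto\tfrac{1-x}{x}$ is decreasing, the bounds $\phi\ge b$ and $\theta\le a$ give $\tfrac{\phi(1-\theta)}{\theta(1-\phi)}\ge\tfrac{b(1-a)}{a(1-b)}=\tfrac{\sigma(a)}{\sigma(b)}$, so that $e^{2D}\ge\tfrac{\sigma(a)}{\sigma(b)}$ and $d_{\gamma}z\ge\tfrac12\log\tfrac{\sigma(a)}{\sigma(b)}$. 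The assertions $a>0$ and $b<1$ drop out separately from feasibility: $a=0$ forces $\nu=0$ and hence $b=0$, while $b=1$ forces $\phi=1=\int g\,dA_z$, hence $\nu=A_z$ and $a=1$; each case is excluded by the hypothesis that $a$ and $b$ are not both $0$ and not both $1$.
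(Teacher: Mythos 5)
Your proposal is correct, and the verification checks out: with $z$ at the centre of the ball model and $r=\tanh(D/2)=|\gamma^{-1}\cdot z|$, the density is $g(\zeta)=(1-r^2)^2/|\zeta-w|^4$, its superlevel sets are round caps centred at $w/|w|$, and for the cap of normalised measure $\theta$ one computes $\phi=(1+r)^2\theta/\bigl((1-r)^2+4r\theta\bigr)$, whence $\phi(1-\theta)/\bigl(\theta(1-\phi)\bigr)=\bigl((1+r)/(1-r)\bigr)^2=e^{2D}$ independently of the threshold, exactly as you claim; the bathtub/rearrangement step and the final monotonicity in $\theta\leq a$, $\phi\geq b$ then deliver the stated bound, and your treatment of the degenerate cases $a>0$, $b<1$ is also sound (your side remark that the crude bounds $e^{-2D}\leq g\leq e^{2D}$, $\int g\,dA_z=1$ alone would only give the exponent $e^{4D}$ is likewise accurate). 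Note, however, that this paper contains no proof of the lemma at all: it is quoted verbatim from Culler--Shalen (\cite[Lemma 5.5]{CSParadox}, improved in \cite[Lemma 2.1]{CSMargulis}), so the comparison must be with the cited source rather than with the paper, and there your argument is essentially a faithful reconstruction of the Culler--Shalen proof --- identifying $\lambda_{\gamma,z}^2$ with the Radon--Nikodym derivative $dA_{\gamma^{-1}\cdot z}/dA_z$, reducing the extremal problem to cap indicators, and exploiting the exact, threshold-independent cap identity to obtain the sharp factor $\tfrac{1}{2}$ --- rather than a genuinely different route.
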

Provided that $0<\nu_{s(\gamma)}(S_{\infty})<1$ for every group--theoretical relation $(\gamma,s(\gamma),S(\gamma))$ of $\Gamma_{\mathcal{D}}$, when we let $\nu=\nu_{s(\gamma)}$, $a=\nu_{s(\gamma)}(S_{\infty})$ and $b=\int_{S_{\infty}}(\lambda_{\gamma,z_0})^2d\nu_{s(\gamma)}$, \fullref{thm:2:1} and \fullref{lem1.2}  produce a set $\mathcal{G}=\{f_l\}_{l=1}^{60}$ of real--valued functions on $\Delta^{27}$ such that 
\begin{equation}\label{eqn:1:3}
e^{2d_{\gamma}z}\geq f_l(\tb{m})=\sigma\left(\dis{\sum_{\psi\in S(\gamma)}\int_{S_{\infty}} d\nu_{\psi}}\right)\sigma\left(\dis{\int_{S_{\infty}}
d\nu_{s(\gamma)}}\right)
\end{equation}
 for every $\gamma\in\Gamma_{\jrg}$ for some $l=1,\dots,60$. This is established in \fullref{dispfunc} in which formulas of the functions in $\mathcal{G}$ are explicitly stated. In the equation in (\ref{eqn:1:3}) above  $\tb{m}=(\nu_{\xi\eta^{-1}\xi^{-1}}(S_{\infty}),\dots,\nu_{\xi^{-2}}(S_{\infty}))$ is a point of the set
 \[
 \Delta^{27}=\left\{\tb{x}=(x_1,x_2,\dots,x_{28})\in\mathbb{R}^{28}_+\co\sum_{l=1}^{28}x_i=1\right\},
 \]
whose entries ordered by $p$ in (\ref{list:1:4}).  As a particular example, by the group--theoretical relation in (\ref{example}), the equality in (\ref{ex:1:2}), \fullref{lem1.2} and \fullref{dispfunc}, for $z\in\mathbb{H}^3$ we have $d_{\xi\eta\xi^{-1}}z\geq\tfrac{1}{2}\log f_1(\tb{m})$, where
\[
f_1(\tb{x})=\frac{1-x_1-x_2-x_3-x_4-x_5-x_6-x_7}{x_1+x_2+x_3+x_4+x_5+x_6+x_7}\cdot\frac{1-x_1}{x_1}.
\]
As a consequence of \fullref{thm:2:1}, \fullref{lem1.2} and \fullref{dispfunc}, in the case (\ref{I})  \fullref{thm:4:1} follows from the statement below and the inequality following;

\medskip
\noindent {\bf \fullref{thm:3:2}}\quad
{\sl If $G\co\Delta^{27}\to\mathbb{R}$ is the function defined by $\tb{x}\mapsto\max\{f(\tb{x}): f\in\mathcal{G}\}$, then we have $\inf_{\tb{x}\in\Delta^{27}}G(\tb{x})=24.8692...,$}
\medskip
\begin{equation}\label{inequality}
\max_{\gamma\in\Gamma_{\jrg}}\left\{d_{\gamma}z\right\} \geq  \tfrac{1}{2}\log G(\tb{m})
                         \geq  \tfrac{1}{2}\log\left(\inf_{\tb{x}\in\Delta^{27}} G(\tb{x})\right).
\end{equation}
\medskip

Let $\mathfrak{X}$ denote the character variety $PSL(2,\C)\times PSL(2,\C)$ and $\mathfrak{GF}$ be the set of pairs of isometries $(\xi,\eta)\in\mathfrak{X}$ such that $\langle\xi,\eta\rangle$ is free, geometrically finite and without any parabolic. In the case (\ref{II}), when $\Gamma=\langle\xi,\eta\rangle$ is geometrically finite, for a fixed $z\in\hyp$ we define the function $f_{z}\co\mathfrak{X}\to\R$ for $\Gamma_{\jrg}$, described in (\ref{conj}), with the formula
\begin{displaymath}
f_{z}(\xi,\eta)=\max_{\psi\in\Gamma_{\jrg}}\{\dpsi\}.
\end{displaymath}
This function is continuous and proper. Moreover by similar arguments given in \cite[Theorem 9.1]{CSParadox}, \cite[Theorem 5.1]{Y1} and  \cite[Theorem 4.1]{Y2}  it can be shown that it takes its minimum value in $\overline{\mathfrak{GF}}-\mathfrak{GF}$ on the open set $\mathfrak{GF}$. It is known by \cite[Propositions 9.3 and 8.2]{CSParadox}, \cite[Main Theorem]{CSH} and \cite{CCHS} that the set of $(\xi,\eta)$ such that $\langle\xi,\eta\rangle$ is free, geometrically infinite and without any parabolic is dense in $\overline{\mathfrak{GF}}-\mathfrak{GF}$ and, every $(\xi,\eta)\in\mathfrak{X}$ with  $\langle\xi,\eta\rangle$ is free and without any parabolic is in $\overline{\mathfrak{GF}}$. This reduces geometrically finite case to geometrically infinite case completing the proof of \fullref{thm:4:1}.

We shall use the geometry of the action of the loxodromic elements of $\tnr{Isom}^+(\hyp)$ to prove \fullref{thm:4:2}. Let $\xi$ and $\eta$ be two non-commuting loxodromic isometries of $\hyp$ and $z\in\hyp$. Then the displacement $d_{\xi}z$ given by $\xi$ can be expressed as
\[
\sinh^2\tfrac{1}{2}d_{\xi}z =  \sinh^2(\tfrac{1}{2}T_{\xi})\cosh^2d_z\mathcal{A}+\sin^2\theta\sinh^2d_z\mathcal{A},
\]
where $T_{\xi}$, $\theta$ and $\mathcal{A}$ are the translation length, rotational angle and axis of $\xi$, respectively. Above $d_z{\mathcal{A}}$ denotes the distance between $z$ and $\mathcal{A}$. Let $\mathcal{B}$ be the axis of $\eta\xi\eta^{-1}$. Similarly $d_{\eta\xi\eta^{-1}}z$ can be expressed as 
\[
\sinh^2\tfrac{1}{2}d_{\eta\xi\eta^{-1}}z  =  \sinh^2(\tfrac{1}{2}T_{\xi})\cosh^2d_z\mathcal{B}+\sin^2\theta\sinh^2d_z\mathcal{B}.
\]
Because $d_{\xi}z_1=d_{\eta\xi\eta^{-1}}z_1$, by reversing the inequalities used to prove \cite[Theorem 5.4.5]{B} it is possible to show that 
$$|\textnormal{trace}^2(\xi)-4|+|\textnormal{trace}(\xi\eta\xi^{-1}\eta^{-1})-2|\geq 2\sinh^2\tfrac{1}{2}d_{\xi}z_1$$
for the mid--point $z_1$ of the shortest geodesic segment joining $\mathcal{A}$ and $\mathcal{B}$. Then the main result of this paper \fullref{thm:4:2} follows from the inequality above and \fullref{thm:4:1}.  

To prove \fullref{thm:3:2}, we shall show that there exists a subset $\mathcal{F}=\{f_1,\dots,f_{28}\}$ of $\mathcal{G}$ such that $\inf_{\tb{x}\in\Delta^{27}} G(\tb{x})=\inf_{\tb{x}\in\Delta^{27}} F(\tb{x})$, where $F(\tb{x})=\max\{f(\tb{x}): f\in\mathcal{F}\}$. We will compute $\inf_{\tb{x}\in\Delta^{27}} F(\tb{x})$ by using the following properties of $F$:
\begin{enumerate}[label=\arabic*]
\renewcommand{\labelenumi}{\alph{enumi}}
\item\hspace{-.1cm}.\label{a} $\inf_{\tb{x}\in\Delta^{27}} F(\tb{x})=\min_{\tb{x}\in\Delta^{27}} F(\tb{x})=\alpha_*$ at some $\xs\in\Delta^{27}$,
\item\hspace{-.1cm}.\label{b} $\xs$ is unique and $\xs\in\Delta_{27}=\{\tb{x}\in\Delta^{27}\co f_i(\tb{x})=f_j(\tb{x})\tnr{ for every }f_i,f_j\in\mathcal{F}\}$.
\end{enumerate}
The property in (\ref{a}) is proved in \fullref{lemtwo} which exploits the fact that on any sequence $\{\tb{x}_n\}\subset\Delta^{27}$ that limits on the boundary of the simplex $\Delta^{27}$ some of the displacement functions $f_i\in\mathcal{F}$ approach to infinity. 


Each statement in the property in (\ref{b}) is proved in \fullref{prop:3:1} and \fullref{prop:3:2}, respectively. 
We shall first prove \fullref{prop:3:1}. We will see that  the functions in $\mathcal{F}'=\{f_1,f_5,f_9,f_{13},f_{15},f_{19},f_{23},f_{27}\}$ in $\mathcal{F}$ play a more important role in computing $\alpha_*$. 
At least one of the functions in $\mathcal{F}'$ takes the value $\alpha_*$. This is showed in \fullref{firstfive}. Each function $f_l$ in $\mathcal{F}'$ is a strictly convex function on an open convex subset $C_{f_l}$, defined in (\ref{C1}), of $\Delta^{27}$ for $l\in J=\{1,5,9,13,15,19,23,27\}$.  Moreover by \fullref{unique5} and \fullref{unique6} we shall show that $\xs\in C=\bigcap_{l\in J}C_{f_l}$ which is itself convex. The minimum of the maximum of the functions in $\mathcal{F}'$ on $C$ is calculated as $\alpha_*$ in \fullref{unique7}. Then by standard facts from convex analysis, \fullref{prop:3:1} will follow.

\fullref{prop:3:1} reduces the computation of $\alpha_*$ to the comparison of only four values $f_1(\xs)=\alpha_*$, $f_2(\xs)\leq\alpha_*$, $f_3(\xs)\leq\alpha_*$ and $f_7(\xs)\leq\alpha_*$, which is proved in \fullref{lem:3:3}. Considering $\Delta^{27}$ as a submanifold of $\R^{28}$, if $f_l(\xs)<\alpha_*$ for some $l\in\{2,3,7\}$, the fact that there are directions in the tangent space $T_{\xs}\Delta^{27}$ of $\Delta^{27}$ at $\xs$ so that all of the displacement functions in $\mathcal{F}$ take values strictly less than $\alpha_*$ on the line segments extending in these directions will prove \fullref{prop:3:2}.  Existence of these directions will be showed either by a direct calculation or by \fullref{flipside}.

Since the coordinate sum of $\xs$ is $1$, \fullref{prop:3:1} and \fullref{prop:3:2} together give a method to calculate the coordinates of $\xs$ explicitly. By evaluating any of the displacement functions in $\mathcal{F}$ at $\xs$ we find the value of $\alpha_*$. Details of this method will be given in \fullref{thm:3:1}. Finally we will show that $f(\xs)<\alpha_*$  for every $f\in\mathcal{G}-\mathcal{F}$ which implies that  $\alpha_*=\inf_{\tb{x}\in\Delta^{27}}G(\tb{x})$ completing the proof of \fullref{thm:3:2}. 

All of the computations summarised above to prove \fullref{thm:4:1} and \fullref{thm:4:2}  for purely loxodromic $2$-generator free Kleinian groups can be generalised to prove analogous results for purely loxodromic finitely generated free Kleinian groups. We will finish this paper by phrasing these generalisations in \fullref{conj:4:1} and \fullref{conj:4:2} and by presenting their proof sketches.



\section{Displacement functions for the isometries in $\Gamma_{\jrg}$}\label{S2}

In this section we shall determine the displacement functions for the hyperbolic displacements given by the isometries in $\Gamma_{\jrg}$. We introduce the following subsets of $\Psi$ defined in (\ref{list:1:4}): Let $\Phi_1  =  \{\xi,\eta^{-1},\eta,\xi^{-1}\}$ and $\Psi=\{\xi^2,\eta^{-2},\eta^2,\xi^{-2}\}\cup\bigcup_{l=1}^8\Psi_l$, where
\[
\begin{array}{ll}
\Psi_1 =   \{\xi\eta^{-1}\xi^{-1},\xi\eta^{-1}\xi,\xi\eta^{-2}\}, & \Psi_2 =\{\xi\eta^2,\xi\eta\xi^{-1},\xi\eta\xi\},\\
\Psi_3  =  \{\eta^{-1}\xi^{-1}\eta^{-1},\eta^{-1}\xi^{-1}\eta,\eta^{-1}\xi^{-2}\}, & \Psi_4 = \{\eta^{-1}\xi^2,\eta^{-1}\xi\eta^{-1},\eta^{-1}\xi\eta\} , \\
\Psi_5  =  \{\eta\xi^{-1}\eta^{-1},\eta\xi^{-1}\eta,\eta\xi^{-2}\}, & \Psi_6 = \{\eta\xi^2,\eta\xi\eta^{-1},\eta\xi\eta\},  \\
\Psi_7  = \{\xi^{-1}\eta^{-1}\xi^{-1},\xi^{-1}\eta^{-1}\xi,\xi^{-1}\eta^{-2}\}, & \Psi_8  = \{\xi^{-1}\eta^2,\xi^{-1}\eta\xi^{-1},\xi^{-1}\eta\xi\}.
\end{array}
\]
First we prove the statement below which gives the relevant group-theoretical relations of the decomposition $\Gamma_{\mathcal{D}}$ for the isometries in $\Gamma_{\jrg}$: 


\begin{lemma}\label{lem:2:1}
Let $\Gamma=\langle\xi,\eta\rangle$ be a $2$--generator free group and $\Gamma_{\mathcal{D}}$ be the decomposition of $\Gamma$ in (\ref{dJ}). Then there are $60$ group--theoretical relations ($\gamma,s(\gamma),S(\gamma)$) for $\gamma\in\Gamma_{\jrg}$. 
\end{lemma}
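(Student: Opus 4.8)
# Proof Proposal for Lemma 2.1

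The plan is to exploit the free group structure of $\Gamma=\langle\xi,\eta\rangle$ together with the explicit enumeration $p\co\Psi\to\{1,\dots,28\}$ to manufacture, for each $\gamma\in\Gamma_{\jrg}$, a group--theoretical relation of the form $(\gamma,s(\gamma),S(\gamma))$ and then verify by a counting argument that exactly $60$ such relations arise. First I would recall the mechanism behind the example relation (\ref{example}): given $\gamma\in\Gamma_{\jrg}$ and a word $\psi=s(\gamma)\in\Psi$, the left translate $\gamma J_{\psi}$ is again a union of sets of the decomposition $\Gamma_{\mathcal{D}}$. Concretely, every reduced word $w$ beginning with $\psi$ has the property that, after left--multiplication by $\gamma$ and free reduction, the resulting word $\gamma w$ either lands in $\{1\}\cup\Psi_r$ or begins with one of the length--three (or length--two, for the squares) prefixes enumerated in (\ref{list:1:4}). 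The key observation is that because $\Gamma$ is free, the cancellation pattern when forming $\gamma\psi$ is completely determined by how the terminal letters of $\gamma$ interact with the initial letters of $\psi$; hence the image $\gamma J_{\psi}$ is a disjoint union $J_{\Phi}$ for a well-defined subset $\Phi=\Psi-S(\gamma)$ of $\Psi$ (up to the finitely many short words absorbed into $\{1\}\cup\Psi_r$).

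The core of the argument is therefore bookkeeping. For each of the $16$ isometries $\gamma\in\Gamma_{\jrg}$ listed in (\ref{conj}) I would choose the appropriate $s(\gamma)\in\Psi$ so that the product $\gamma\cdot s(\gamma)$ cancels down to a short word; this is exactly the situation in which the relation $\gamma J_{s(\gamma)}=\Gamma-(\{\text{short words}\}\cup J_{S(\gamma)})$ becomes available, with $S(\gamma)$ the explicit list of prefixes that $\gamma$ fails to reach. I expect each such $\gamma$ to admit several admissible choices of $s(\gamma)$: an isometry of word length three (such as $\xi\eta\xi^{-1}$) will typically produce relations for each of the ``opposite'' prefixes with which it can cancel. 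Carrying out this enumeration for all $16$ elements of $\Gamma_{\jrg}$, and recording which pairs $(\gamma,s(\gamma))$ yield a genuine group--theoretical relation, should produce the claimed total; the natural expectation, given the $8$-fold symmetry visible in the partition $\Psi=\{\xi^2,\eta^{-2},\eta^2,\xi^{-2}\}\cup\bigcup_{l=1}^8\Psi_l$, is that each of a suitable subset of the generators contributes an equal share, summing to $60$. The symmetry under the obvious automorphisms of $\Gamma$ permuting $\xi,\eta,\xi^{-1},\eta^{-1}$ will let me reduce the casework: it suffices to establish the relations for one representative of each orbit and then transport them.

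The main obstacle I anticipate is the careful and \emph{exhaustive} verification that the sets $S(\gamma)$ are correct and that no relation is double-counted or omitted. Because the decomposition (\ref{dJ}) classifies words by their initial $\Psi$-prefix, I must check that left--multiplication by $\gamma$ really does send $J_{s(\gamma)}$ onto a union of full sets $J_\psi$ (rather than splitting some $J_\psi$ across the partition), which relies on the fact that the defining prefixes in (\ref{list:1:4}) are chosen to be mutually non-comparable in the prefix order on reduced words. Verifying this non-overlap, and confirming that the ``leftover'' short words always lie in $\{1\}\cup\Psi_r$ and hence contribute nothing to the measure-theoretic relations later, is the delicate step. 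I would organize the bookkeeping into the four tables referenced in the statement (\fullref{Table1} through \fullref{Table4}), each handling one symmetry class, and close the argument by tallying $60$ rows across the tables. The final count then follows by inspection, and the lemma is proved.
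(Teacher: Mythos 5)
Your proposal is essentially the paper's own proof: the paper does exactly the exhaustive cancellation bookkeeping you describe, simply listing every relation $(\gamma,s(\gamma),S(\gamma))$ in tables (five of them, \fullref{Table1}--\fullref{Table5}, not four) and counting $60$ rows, namely $7$ relations for each of the $4$ generators in $\Phi_1$ and $4$ relations for each of the $8$ three--letter conjugates, so that $4\cdot 7+8\cdot 4=60$. One slip to correct: $\Gamma_{\jrg}$ in (\ref{conj}) has $12$ elements, not $16$ (the set $\Phi_1$ contributes $4$ and the conjugate list $8$), though this does not affect your method, since the enumeration runs over the explicitly given list in any case.
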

\begin{proof}
We list all of the group--theoretical relations of $\Gamma_{\mathcal{D}}$ for $\gamma\in\Gamma_{\jrg}$ defined in (\ref{conj}):
\begin{table}[H]
\begin{center}
\scalebox{1}{
\begin{tabular}{|c|c|c|c|c|c|c|c|}
  \hline
          & $\gamma$ & $s(\gamma)$ & $S(\gamma)$ &  & $\gamma$ & $s(\gamma)$ & $S(\gamma)$\\
  \hline
  $1$ & $\xi\eta\xi^{-1}$  &$\xi\eta^{-1}\xi^{-1}$ & $\{\xi^2\}\cup\Psi_{1}\cup\Psi_{2}$ &  $5$  & $\eta\xi\eta^{-1}$  & $\eta\xi^{-1}\eta^{-1}$ & $\{\eta^2\}\cup\Psi_{5}\cup\Psi_{6}$\\ 
  \hline
  $2$ &  $\xi\eta^{-1}\xi^{-1}$  & $\xi\eta\xi^{-1}$ & $\{\xi^{2}\}\cup\Psi_{1}\cup\Psi_{2}$ &  $6$  & $\eta\xi^{-1}\eta^{-1}$  &$\eta\xi\eta^{-1}$ & $\{\eta^{2}\}\cup\Psi_{5}\cup\Psi_{6}$\\
  \hline
 $3$ &  $\eta^{-1}\xi\eta$  & $\eta^{-1}\xi^{-1}\eta$ &  $\{\eta^{-2}\}\cup\Psi_{3}\cup\Psi_{4}$ &  $7$ & $\xi^{-1}\eta\xi$  &$\xi^{-1}\eta^{-1}\xi$ & $\{\xi^{-2}\}\cup\Psi_{7}\cup\Psi_{8}$  \\ 
  \hline
 $4$ &  $\eta^{-1}\xi^{-1}\eta$  & $\eta^{-1}\xi\eta$  &  $\{\eta^{-2}\}\cup\Psi_{3}\cup\Psi_{4}$ &   $8$ &  $\xi^{-1}\eta^{-1}\xi$  &$\xi^{-1}\eta\xi$ & $\{\xi^{-2}\}\cup\Psi_{7}\cup\Psi_{8}$\\ 
  \hline
\end{tabular}}
\caption{Group--theoretical relations of  $ \Gamma_{\mathcal{D}}$ with $3$--cancellation.}\label{Table1}
\end{center}
\end{table}
\begin{table}[H]
\begin{center}
\scalebox{1}{
\begin{tabular}{|c|c|c|c|c|c|c|c|}
  \hline
  & $\gamma$ & $s(\gamma)$ & $S(\gamma)$ & & $\gamma$ & $s(\gamma)$ & $S(\gamma)$\\
  \hline
 $1$ &  $\xi\eta^{-1}\xi^{-1}$  &$\xi\eta^{2}$ & $\Psi-\Psi_2$ &   $5$ & $\eta\xi^{-1}\eta^{-1}$  & $\eta\xi^{2}$ &  $\Psi-\Psi_6$\\ 
  \hline
 $2$ &   $\xi\eta\xi^{-1}$  & $\xi\eta^{-2}$ &  $\Psi-\Psi_1$ &  $6$ & $\eta\xi\eta^{-1}$  & $\eta\xi^{-2}$ &  $\Psi-\Psi_5$\\
  \hline
 $3$ & $\eta^{-1}\xi^{-1}\eta$ &  $\eta^{-1}\xi^2$  &   $\Psi-\Psi_4$ &  $7$ & $\xi^{-1}\eta^{-1}\xi$  & $\xi^{-1}\eta^2$ &  $\Psi-\Psi_8$  \\ 
  \hline
 $4$ &  $\eta^{-1}\xi\eta$  & $\eta^{-1}\xi^{-2}$ &   $\Psi-\Psi_3$ &  $8$ &  $\xi^{-1}\eta\xi$  & $\xi^{-1}\eta^{-2}$ &  $\Psi-\Psi_7$\\ 
  \hline
\end{tabular}}
\caption{Group--theoretical relations of  $\Gamma_{\mathcal{D}}$ with $2$--cancellation.}\label{Table2}
\end{center}
\end{table}
\begin{table}[H]
\begin{center}
\scalebox{1}{
\begin{tabular}{|c|c|c|c|c|c|c|c|}
  \hline
 & $\gamma$ & $s(\gamma)$ & $S(\gamma)$ &  & $\gamma$ & $s(\gamma)$ & $S(\gamma)$\\
  \hline
   $1$ & $\xi^{-1}$  & $\xi\eta^{-1}\xi^{-1}$ & $ \Psi-\Psi_3$ &  $15$ & $\eta^{-1}$  &$\eta\xi^{-1}\eta^{-1}$ &  $ \Psi-\Psi_7$\\ 
  \hline
  $2$ &  $\xi^{-1}$  & $\xi\eta^{-1}\xi$ & $ \Psi-\Psi_4$  &  $16$ &$\eta^{-1}$  &$\eta\xi^{-1}\eta$ &  $ \Psi-\Psi_8$\\
  \hline
 $3$ &  $\xi^{-1}$  & $\xi\eta^{-2}$ &  $ \Psi-\{\eta^{-2}\}$  &  $17$ &$\eta^{-1}$  & $\eta\xi^{-2}$ &  $ \Psi-\{\xi^{-2}\}$  \\ 
  \hline
 $4$ &  $\xi^{-1}$  & $\xi\eta^2$ &   $ \Psi-\{\eta^{2}\}$ &   $18$ &$\eta^{-1}$  & $\eta\xi^{2}$ &  $ \Psi-\{\xi^{2}\}$\\ 
 \hline
  $5$ &$\xi^{-1}$  & $\xi\eta\xi^{-1}$ &  $ \Psi-\Psi_5$   &   $19$ &$\eta^{-1}$  & $\eta\xi\eta^{-1}$ &  $ \Psi-\Psi_1$\\ 
 \hline
 $6$ & $\xi^{-1}$  & $\xi\eta\xi$ & $ \Psi-\Psi_6$   &  $20$ & $\eta^{-1}$  & $\eta\xi\eta$ &  $ \Psi-\Psi_2$\\ 
 \hline
 $7$ & $\xi^{-1}$  & $\xi^{2}$ &   $ \Psi-\{\xi^{2}\}\cup\Psi_1\cup\Psi_2$  &    $21$ &$\eta^{-1}$  &$\eta^{2}$ &  $ \Psi-\{\eta^2\}\cup\Psi_5\cup\Psi_6$\\ 
 \hline
  $8$ & $\eta$  & $\eta^{-1}\xi^{-1}\eta^{-1}$ &  $ \Psi-\Psi_7$ &  $22$ & $\xi$  & $\xi^{-1}\eta^{-1}\xi^{-1}$ & $ \Psi-\Psi_3$ \\ 
  \hline
  $9$ & $\eta$  & $\eta^{-1}\xi^{-1}\eta$ &  $ \Psi-\Psi_8$ &   $23$ & $\xi$  & $\xi^{-1}\eta^{-1}\xi$ & $ \Psi-\Psi_4$ \\
  \hline
 $10$ &   $\eta$  & $\eta^{-1}\xi^{-2}$ & $ \Psi-\{\xi^{-2}\}$  & $24$ & $\xi$  & $\xi^{-1}\eta^{-2}$ &   $ \Psi-\{\eta^{2}\}$ \\ 
  \hline
   $11$ & $\eta$  &$\eta^{-1}\xi^{2}$ &  $ \Psi-\{\xi^{2}\}$ &   $25$ & $\xi$  & $\xi^{-1}\eta^2$ &  $ \Psi-\{\eta^{-2}\}$ \\ 
 \hline
  $12$ &  $\eta$  &$\eta^{-1}\xi\eta^{-1}$ &  $ \Psi-\Psi_1$ &  $26$ & $\xi$  & $\xi^{-1}\eta\xi^{-1}$ &  $ \Psi-\Psi_5$ \\ 
 \hline
 $13$ &  $\eta$  &$\eta^{-1}\xi\eta$ &  $ \Psi-\Psi_2$ &  $27$ & $\xi$  & $\xi^{-1}\eta\xi$ &   $ \Psi-\Psi_6$ \\ 
 \hline
 $14$ &  $\eta$  &$\eta^{-2}$ & $ \Psi-\{\eta^{-2}\}\cup\Psi_3\cup\Psi_4$ &  $28$ & $\xi$  & $\xi^{-2}$ &  $ \Psi-\{\eta^{-2}\}\cup\Psi_7\cup\Psi_8$    \\ 
 \hline
\end{tabular}}
\caption{Group--theoretical relations of  $ \Gamma_{\mathcal{D}}$ with $1$--cancellation.}\label{Table3}
\end{center}
\end{table}
\begin{table}[H]
\begin{center}
\scalebox{1}{
\begin{tabular}{|c|c|c|c|c|c|c|c|}
  \hline
  & $\gamma$ & $s(\gamma)$ & $S(\gamma)$ & & $\gamma$ & $s(\gamma)$ & $S(\gamma)$\\
  \hline
 $1$ &  $\xi\eta^{-1}\xi^{-1}$  & $\xi\eta\xi$ & $ \Psi-\{\xi^{2}\}$ &   $5$ & $\eta\xi^{-1}\eta^{-1}$ &  $\eta\xi\eta$ &  $ \Psi-\{\eta^{2}\}$\\ 
  \hline
 $2$ &  $\xi\eta\xi^{-1}$   & $\xi\eta^{-1}\xi$ &  $ \Psi-\{\xi^{2}\}$ &  $6$ & $\eta\xi\eta^{-1}$  & $\eta\xi^{-1}\eta$ &  $ \Psi-\{\eta^{2}\}$\\
  \hline
 $3$ &  $\eta^{-1}\xi^{-1}\eta$  & $\eta^{-1}\xi\eta^{-1}$ &   $ \Psi-\{\eta^{-2}\}$ &  $7$ & $\xi^{-1}\eta^{-1}\xi$  &$\xi^{-1}\eta\xi^{-1}$ &  $ \Psi-\{\xi^{-2}\}$  \\ 
  \hline
 $4$ &  $\eta^{-1}\xi\eta$  & $\eta^{-1}\xi^{-1}\eta^{-1}$ &   $ \Psi-\{\eta^{-2}\}$ &  $8$ & $\xi^{-1}\eta\xi$   &  $\xi^{-1}\eta^{-1}\xi^{-1}$ &  $ \Psi-\{\xi^{-2}\}$\\ 
  \hline
\end{tabular}}
\caption{Group--theoretical relations of  $ \Gamma_{\mathcal{D}}$ with $2$--cancellation.}\label{Table4}
\end{center}
\end{table}
\begin{table}[H]
\begin{center}
\scalebox{1}{
\begin{tabular}{|c|c|c|c|c|c|c|c|}
  \hline
 & $\gamma$ & $s(\gamma)$ & $S(\gamma)$ & &  $\gamma$ & $s(\gamma)$ & $S(\gamma)$\\
  \hline
  $1$ &  $\xi\eta^{-1}\xi^{-1}$  & $\xi^{2}$ & $ \Psi-\{\xi\eta^{-1}\xi\}$ &  $5$ & $\xi\eta\xi^{-1}$  &$\xi^{2}$ &  $ \Psi-\{\xi\eta\xi\}$\\ 
  \hline
  $2$ &  $\eta^{-1}\xi^{-1}\eta$  & $\eta^{-2}$ &  $ \Psi-\{\eta^{-1}\xi^{-1}\eta^{-1}\}$ &  $6$ & $\eta^{-1}\xi\eta$  &$\eta^{-2}$ &  $ \Psi-\{\eta^{-1}\xi\eta^{-1}\}$\\
  \hline
  $3$ & $\eta\xi^{-1}\eta^{-1}$  & $\eta^2$ &   $ \Psi-\{\eta\xi^{-1}\eta\}$ &  $7$ & $\eta\xi\eta^{-1}$  &$\eta^2$ &  $ \Psi-\{\eta\xi\eta\}$  \\ 
  \hline
  $4$ & $\xi^{-1}\eta^{-1}\xi$  & $\xi^{-2}$ &   $ \Psi-\{\xi^{-1}\eta^{-1}\xi^{-1}\}$ &  $8$ & $\xi^{-1}\eta\xi$  &$\xi^{-2}$ &  $ \Psi-\{\xi^{-1}\eta\xi^{-1}\}$\\ 
 \hline
\end{tabular}}
\caption{Group--theoretical relations of  $ \Gamma_{\mathcal{D}}$ with $1$--cancellation.}\label{Table5}
\end{center}
\end{table}
\noindent In \fullref{Table1}-\fullref{Table5} all of the group--theoretical relations $(\gamma, s(\gamma), S(\gamma))$ of $\Gamma_{\mathcal{D}}$ for $\gamma\in\Gamma_{\jrg}$ are counted. 
\end{proof}


Given the group--theoretical relations in \fullref{lem:2:1}, we decompose the area measure on $S_{\infty}$ accordingly. This is stated in the following theorem. To save space we will not give a proof of this theorem which uses analogous  arguments presented in the proofs of  \cite[Lemma 5.3]{CSParadox}, \cite[Lemma 3.3, Theorem 3.4]{Y1} and \cite[Theorem 2.1]{Y2}.
\begin{theorem}\label{thm:2:1}
Let $\Gamma=\langle\xi,\eta\rangle$ be a purely loxodromic, free, geometrically infinite Kleinian group and $\Gamma_{\mathcal{D}}$ be the decomposition of $\Gamma$ given in (\ref{dJ}). If $z$ denotes a point in $\hyp$, then there is a family of Borel measures $\{\nu_{\psi}\}_{\psi\in\Psi}$ defined on $S_{\infty}$ such that $(i)\ \ A_{z}=\sum_{\psi\in\Psi}\nu_{\psi}$;  $(ii)\ \ A_{z}(S_{\infty})=1$; and
$$(iii)\quad {\int_{S_{\infty}}\left(\lambda_{\gamma,z}\right)^2d\nu_{s(\gamma)}=1-\sum_{\psi\in S(\gamma)}\int_{S_{\infty}} d\nu_{\psi}}$$ for each group--theoretical relation $(\gamma, s(\gamma), S(\gamma))$ of $\Gamma_{\mathcal{D}}$, where $A_{z}$ is the area measure on $S_{\infty}$ based at $z$.
\end{theorem}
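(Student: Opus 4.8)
The plan is to follow the Culler--Shalen template of \cite[Lemma 5.3]{CSParadox}, refined in \cite[Lemma 3.3, Theorem 3.4]{Y1} and \cite[Theorem 2.1]{Y2}, which rests on the fact that the visual area measures form a conformal density of dimension $2$ on $S_{\infty}$. Fixing $z\in\hyp$, I would take $A_z$ to be the visual area measure on $S_{\infty}\cong S^2$ seen from $z$. The family $\{A_w\}_{w\in\hyp}$ is $\Gamma$--equivariant, $\gamma_{*}A_w=A_{\gamma w}$, and its transformation law is encoded by the conformal expansion factor $\lambda_{\gamma,z}$ of \fullref{lem1.2} through the Radon--Nikodym identity
\[
\lambda_{\gamma,z}^{2}=\frac{dA_{\gamma^{-1}z}}{dA_z},\qquad\text{equivalently}\qquad A_z(\gamma E)=\int_{E}\lambda_{\gamma,z}^{2}\,dA_z\quad\text{for Borel }E\subseteq S_{\infty}.
\]
Normalising $A_z(S_{\infty})=1$ yields property $(ii)$ immediately; this is the only place geometric infiniteness enters directly, since $\Lambda_{\Gamma\cdot z}=S_{\infty}$ guarantees that $A_z$ is a genuine probability measure carried by the whole sphere.

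Next I would construct $\{\nu_{\psi}\}_{\psi\in\Psi}$ from the symbolic dynamics of the free group on its boundary. Because $\Gamma$ is free, $A_z$--almost every point of $S_{\infty}$ is the endpoint of a unique infinite reduced word in $\xi,\eta$; reading off the initial segment of that word places the point in exactly one cylinder set $Z_{\psi}\subseteq S_{\infty}$, where $\psi$ ranges over $\Psi$. Here two letters suffice to determine the branch for the square words $\xi^{\pm2},\eta^{\pm2}$ while three letters are needed for the remaining $24$ words, which accounts for the mixed word--lengths in the list (\ref{list:1:4}). The sets $\{Z_{\psi}\}_{\psi\in\Psi}$ then form a Borel partition of $S_{\infty}$ modulo an $A_z$--null set, and I would set $\nu_{\psi}:=A_z|_{Z_{\psi}}$ (the honest construction being the weak--$*$ limiting procedure of the cited references). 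Additivity over this partition is exactly property $(i)$, $A_z=\sum_{\psi\in\Psi}\nu_{\psi}$, and each $\nu_{\psi}$ is a Borel measure with $\nu_{\psi}\le A_z$, as required later in \fullref{lem1.2}.

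Property $(iii)$ would then be read off from the $60$ combinatorial identities collected in \fullref{lem:2:1}. A relation $(\gamma,s(\gamma),S(\gamma))$, for instance (\ref{example}), asserts an identity of the form $\gamma\,J_{s(\gamma)}=\Gamma-\big(F\cup J_{S(\gamma)}\big)$ among reduced words, with $F$ finite; since the endpoint map is $\Gamma$--equivariant and $F$ is $A_z$--null, this transfers to the measurable identity $\gamma\,Z_{s(\gamma)}=S_{\infty}-\bigsqcup_{\psi\in S(\gamma)}Z_{\psi}$ modulo null sets. Taking $A_z$ of both sides and rewriting the left-hand side with the transformation law gives
\[
\int_{S_{\infty}}\lambda_{\gamma,z}^{2}\,d\nu_{s(\gamma)}=\int_{Z_{s(\gamma)}}\lambda_{\gamma,z}^{2}\,dA_z=A_z\!\left(\gamma\,Z_{s(\gamma)}\right)=1-\sum_{\psi\in S(\gamma)}A_z(Z_{\psi})=1-\sum_{\psi\in S(\gamma)}\int_{S_{\infty}}d\nu_{\psi},
\]
which is precisely $(iii)$. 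Running this computation across \fullref{Table1}--\fullref{Table5} of \fullref{lem:2:1} covers every $\gamma\in\Gamma_{\jrg}$.

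The step I expect to be the genuine obstacle is the claim underlying the construction of the $\nu_{\psi}$: that the coding of $S_{\infty}$ by infinite reduced words is defined and essentially injective off an $A_z$--null set, so that the cylinders $Z_{\psi}$ partition $S_{\infty}$ up to measure zero and $(i)$ holds with equality rather than as an inequality $A_z\ge\sum_{\psi}\nu_{\psi}$. This is exactly where geometric infiniteness does the real work, through the Sullivan--type fact that the conical (radial) limit set has full $A_z$--measure and that the boundary action is conservative; I would import this from \cite[Lemma 5.3]{CSParadox}, \cite[Theorem 3.4]{Y1} and \cite[Theorem 2.1]{Y2}. Once the full--measure coding is secured, the passage from the group--level relations of \fullref{lem:2:1} to the measure--level relations in $(iii)$ is the routine bookkeeping displayed above.
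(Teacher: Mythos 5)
The paper itself gives no detailed proof of \fullref{thm:2:1}: it explicitly defers to the arguments of \cite[Lemma 5.3]{CSParadox}, \cite[Lemma 3.3, Theorem 3.4]{Y1} and \cite[Theorem 2.1]{Y2}. Your bookkeeping layer — the transformation law $A_z(\gamma E)=\int_E\lambda_{\gamma,z}^2\,dA_z$ and the transfer of the $60$ relations of \fullref{lem:2:1} into the identities $(iii)$ — is exactly the right template and matches those sources. But your construction of the measures $\nu_{\psi}$ is not theirs, and it contains a genuine gap at precisely the point you flag. In the cited references the $\nu_{\psi}$ are \emph{not} restrictions of $A_z$ to cylinder sets of a boundary coding; they are weak--$*$ limits, as $s\downarrow 2$, of the normalised partial orbit sums $\sum_{\gamma'\in J_{\psi}}e^{-s\,\mathrm{dist}(z,\gamma'\cdot z)}\delta_{\gamma'\cdot z}$ in Patterson's construction. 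Geometric infiniteness enters through tameness (Bonahon) and Canary's analytic tameness, which give divergence of the Poincar\'e series at exponent $2$, so that the total limit measure is a conformal density of dimension $2$ identified with $A_z$ by Sullivan's uniqueness; property $(i)$ then holds because the finitely many elements of $\{1\}\cup\Psi_r$ contribute nothing in the limit, and $(iii)$ follows by applying $\gamma$ to the partial sums over $J_{s(\gamma)}$ and using that the weight ratios $e^{-s\,\mathrm{dist}(z,\gamma\gamma'\cdot z)}/e^{-s\,\mathrm{dist}(z,\gamma'\cdot z)}$ converge to $\lambda_{\gamma,z}^2$ at the corresponding limit points. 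No coding of $S_{\infty}$ by infinite words is ever invoked, so your plan to ``import'' the a.e.--unique coding from these references cannot succeed: they do not prove it.

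The claim itself — that $A_z$--almost every point of $S_{\infty}$ is the endpoint of a \emph{unique} infinite reduced word, so that the cylinders $Z_{\psi}$ partition $S_{\infty}$ modulo a null set — is much deeper than anything in \cite{CSParadox}, \cite{Y1}, \cite{Y2}. Since $\Gamma$ is geometrically infinite it is not convex cocompact, the orbit map is not a quasi--isometric embedding, and the endpoint map is a Cannon--Thurston map from the Cantor set $\partial F_2$ \emph{onto} $\Lambda_{\Gamma}=S_{\infty}\cong S^2$; being a continuous surjection from a Cantor set onto a connected space it is necessarily non--injective, and whether its non--injectivity locus (governed by the ending lamination) is Lebesgue--null is a serious theorem that your proposal neither proves nor cites correctly. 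Note also that conservativity, i.e.\ full measure of the conical limit set, does not by itself yield uniqueness of the coding. Relatedly, your remark that $(ii)$ is ``the only place geometric infiniteness enters directly'' is off: it is what powers the whole construction (divergence type and the identification of the Patterson limit with $A_z$), as your own closing paragraph half--concedes. The repair is simply to run the weak--$*$ construction sketched above, after which your verification of $(iii)$ survives with $Z_{\psi}$--cylinders replaced by shadows of orbit points in $J_{\psi}$.
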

Let $I=J_1\cup J_2\cup J_3\cup J_4=\{1,2,\dots,28\}$ and  $I_l$ for $l\in\{1,\dots,8\}$ be the following index sets:
\begin{equation}\label{Ind:2:1}
\scalebox{.92}{$
\begin{array}{llll}
I_1=\{1,2,3\}, & I_2=\{4,5,6\}, & I_3=\{8,9,10\}, &  I_4=\{11,12,13\},\\
I_5=\{15,16,17\}, & I_6=\{18,19,20\}, & I_7=\{22,23,24\}, & I_8=\{25,26,27\}, \\
J_1=\{1,\dots,7\}, & J_2=\{8,\dots,14\}, & J_3=\{15,\dots,21\}, & J_4=\{22,\dots,28\}.
\end{array}$}
\end{equation}
We shall use the functions $\sigma\co (0,1)\to (0,\infty)$, $\Sigma_J^i\co\Delta^{27}\to (0,1)$,  $\Sigma_i^J\co\Delta^{27}\to (0,1)$, $\Sigma_I^j\co\Delta^{27}\to (0,1)$ and $\Sigma^n\co\Delta^{27}\to (0,1)$ with formulas $\sigma(x)=1/x-1$, 
\begin{equation}\label{sigma}
\Sigma_i^J(\tb{x})=\sum_{l\in I-J_i}x_l,\ \ \Sigma_J^i(\tb{x})=\sum_{l\in J_i}x_l,\ \ \Sigma_I^j(\tb{x})=\sum_{l\in I-I_j}x_l,\ \ \Sigma^n(\tb{x})=\sum_{l\in I-\{n\}}x_l
\end{equation}
for $i\in\{1,2,3,4\}$, $j\in\{1,2,3,4,5,6,7,8\}$ and $n\in\{1,2,\dots,28\}$, respectively, to express the displacement functions compactly. In particular we prove the following:
\begin{proposition}\label{dispfunc}
Let $\Gamma=\langle\xi,\eta\rangle$ be a purely loxodromic, free, geometrically infinite Kleinian group and $ \Gamma_{\mathcal{D}}$ be the decomposition of $\Gamma$ defined in (\ref{dJ}). For any $z\in\hyp$ and for each $\gamma\in\Gamma_{\jrg}$, the value $e^{2d_{\gamma}z}$ is bounded below by $f_l(\tb{x})$, $g_i(\tb{x})$, $h_j(\tb{x})$ or $u_n(\tb{x})$ for $\tb{x}\in\Delta^{27}$ for at least one of the displacement functions $f_l$, $g_i$, $h_j$ or $u_n$ whose formulas are listed in the tables below
\begin{table}[H]
\begin{center}
\scalebox{.95}{
\begin{tabular}{|c|c|c|}
\hline
 $f_{1}(\tb{x})=\sigma\left(\Sigma_{J}^1(\tb{x})\right)\sigma(x_1)$    & $f_{15}(\tb{x})=\sigma\left(\Sigma_{J}^3(\tb{x})\right)\sigma(x_{15})$ &
 $f_{5}(\tb{x})=\sigma\left(\Sigma_{J}^1(\tb{x})\right)\sigma(x_5) $\\  
 \hline
$ f_{19}(\tb{x})=\sigma\left(\Sigma_{J}^3(\tb{x})\right)\sigma(x_{19})$ &
 $f_{9}(\tb{x})=\sigma\left(\Sigma_{J}^2(\tb{x})\right)\sigma(x_9)$  &  $f_{23}(\tb{x})=\sigma\left(\Sigma_{J}^4(\tb{x})\right)\sigma(x_{23})$\\
 \hline
$ f_{13}(\tb{x})=\sigma\left(\Sigma_{J}^2(\tb{x})\right)\sigma(x_{13})$  & & $f_{27}(\tb{x})=\sigma\left(\Sigma_{J}^4(\tb{x})\right)\sigma(x_{27})$  \\
 \hline
\end{tabular}}
\caption{Displacement functions obtained from the group--theoretical relations in \fullref{Table1}.}\label{table:2:6}
\end{center}
\end{table}
\begin{table}[H]
\begin{center}
\scalebox{.97}{
\begin{tabular}{|c|c|c|}
\hline
 $f_{4}(\tb{x})=\sigma\left(\Sigma_{I}^2(\tb{x})\right)\sigma(x_4)$    & $f_{18}(\tb{x})=\sigma\left(\Sigma_{I}^6(\tb{x})\right)\sigma(x_{18})$ &
 $f_{3}(\tb{x})=\sigma\left(\Sigma_{I}^1(\tb{x})\right)\sigma(x_3) $\\  
 \hline
$ f_{17}(\tb{x})=\sigma\left(\Sigma_{I}^5(\tb{x})\right)\sigma(x_{17})$ &
 $f_{11}(\tb{x})=\sigma\left(\Sigma_{I}^4(\tb{x})\right)\sigma(x_{11})$  &  $f_{25}(\tb{x})=\sigma\left(\Sigma_{I}^8(\tb{x})\right)\sigma(x_{25})$\\
 \hline
$ f_{10}(\tb{x})=\sigma\left(\Sigma_{I}^3(\tb{x})\right)\sigma(x_{10})$  & & $f_{24}(\tb{x})=\sigma\left(\Sigma_{I}^7(\tb{x})\right)\sigma(x_{24})$ \\
 \hline
\end{tabular}}
\caption{Displacement functions obtained from the group--theoretical relations in \fullref{Table2}.}\label{table:2:7}
\end{center}
\end{table}
\begin{table}[H]
\begin{center}
\scalebox{.95}{
\begin{tabular}{|c|c|c|}
\hline
$\dis g_{1}(\tb{x})=\sigma\left(\Sigma_{I}^3(\tb{x})\right)\sigma(x_1)$  &  $f_{2}(\tb{x})=\sigma\left(\Sigma_{I}^4(\tb{x})\right)\sigma(x_2)$ &
$\dis g_{3}(\tb{x})=\sigma\left(\Sigma^{14}(\tb{x})\right)\sigma(x_3)$        \\  \hline $g_{4}(\tb{x})=\sigma\left(\Sigma^{21}(\tb{x})\right)\sigma(x_4)$ &
$g_{5}(\tb{x})=\sigma\left(\Sigma_{I}^5(\tb{x})\right)\sigma(x_5)$  &  $f_{6}(\tb{x})=\sigma\left(\Sigma_{I}^6(\tb{x})\right)\sigma(x_6)$ \\  \hline
$f_{7}(\tb{x})=\sigma\left(\Sigma_{1}^J(\tb{x})\right)\sigma(x_7)$  &  $ f_{8}(\tb{x})=\sigma\left(\Sigma_{I}^7(\tb{x})\right)\sigma(x_8)$ &
$g_{9}(\tb{x})=\sigma\left(\Sigma_{I}^8(\tb{x})\right)\sigma(x_9)$ \\   \hline $g_{10}(\tb{x})=\sigma\left(\Sigma^{28}(\tb{x})\right)\sigma(x_{10})$  &
$g_{11}(\tb{x})=\sigma\left(\Sigma^7(\tb{x})\right)\sigma(x_{11})$  &  $f_{12}(\tb{x})=\sigma\left(\Sigma_{I}^1(\tb{x})\right)\sigma(x_{12})$ \\ \hline
$g_{13}(\tb{x})=\sigma\left(\Sigma_{I}^2(\tb{x})\right)\sigma(x_{13})$  &  $f_{14}(\tb{x})=\sigma\left(\Sigma_{2}^J(\tb{x})\right)\sigma(x_{14})$ &
$g_{15}(\tb{x})=\sigma\left(\Sigma_{I}^7(\tb{x})\right)\sigma(x_{15})$ \\   \hline $f_{16}(\tb{x})=\sigma\left(\Sigma_{I}^8(\tb{x})\right)\sigma(x_{16})$ &
$g_{17}(\tb{x})=\sigma\left(\Sigma^{28}(\tb{x})\right)\sigma(x_{17})$  &  $g_{18}(\tb{x})=\sigma\left(\Sigma^7(\tb{x})\right)\sigma(x_{18})$\\  \hline
$g_{19}(\tb{x})=\sigma\left(\Sigma_{I}^1(\tb{x})\right)\sigma(x_{19})$ &  $f_{20}(\tb{x})=\sigma\left(\Sigma_{I}^2(\tb{x})\right)\sigma(x_{20})$ &
$f_{21}(\tb{x})=\sigma\left(\Sigma_{3}^J(\tb{x})\right)\sigma(x_{21})$ \\  \hline  $f_{22}(\tb{x})=\sigma\left(\Sigma_{I}^3(\tb{x})\right)\sigma(x_{22})$ &
$g_{23}(\tb{x})=\sigma\left(\Sigma_{I}^4(\tb{x})\right)\sigma(x_{23})$ &  $g_{24}(\tb{x})=\sigma\left(\Sigma^{14}(\tb{x})\right)\sigma(x_{24})$ \\ \hline
$g_{25}(\tb{x})=\sigma\left(\Sigma^{21}(\tb{x})\right)\sigma(x_{25})$  &  $f_{26}(\tb{x})=\sigma\left(\Sigma_{I}^5(\tb{x})\right)\sigma(x_{26})$ &
$g_{27}(\tb{x})=\sigma\left(\Sigma_{I}^6(\tb{x})\right)\sigma(x_{27})$  \\  \hline & $f_{28}(\tb{x})=\sigma\left(\Sigma_{4}^J(\tb{x})\right)\sigma(x_{28})$  & \\
 \hline
\end{tabular}}
\caption{Displacement functions obtained from the group--theoretical relations in  \fullref{Table3}.}\label{table:2:8}
\end{center}
\end{table}
\begin{table}[H]
\begin{center}
\scalebox{.95}{
\begin{tabular}{|c|c|c|}
\hline
$h_{1}(\tb{x})=\sigma\left(\Sigma^{28}(\tb{x})\right)\sigma(x_{1})$ & $ h_{15}(\tb{x})=\sigma\left(\Sigma^{14}(\tb{x})\right)\sigma(x_{15})$ &
$h_{5}(\tb{x})=\sigma\left(\Sigma^{28}(\tb{x})\right)\sigma(x_{5})$  \\ \hline  $h_{19}(\tb{x})=\sigma\left(\Sigma^{14}(\tb{x})\right)\sigma(x_{19})$ &
$h_{9}(\tb{x})=\sigma\left(\Sigma^{21}(\tb{x})\right)\sigma(x_{9})$ &  $h_{23}(\tb{x})=\sigma\left(\Sigma^{7}(\tb{x})\right)\sigma(x_{23})$ \\ \hline
$h_{13}(\tb{x})=\sigma\left(\Sigma^{21}(\tb{x})\right)\sigma(x_{13})$  &  & $h_{27}(\tb{x})=\sigma\left(\Sigma^{7}(\tb{x})\right)\sigma(x_{27})$\\
 \hline
\end{tabular}}
\caption{Displacement functions obtained from the group--theoretical relations in  \fullref{Table4}.}\label{table:2:9}
\end{center}
\end{table}
\begin{table}[H]
\begin{center}
\scalebox{.94}{
\begin{tabular}{|c|c|c|}
\hline
$h_{7}(\tb{x})=\sigma\left(\Sigma^{2}(\tb{x})\right)\sigma(x_{7})$ & $ u_{7}(\tb{x})=\sigma\left(\Sigma^{6}(\tb{x})\right)\sigma(x_{7})$ &
$h_{14}(\tb{x})=\sigma\left(\Sigma^{8}(\tb{x})\right)\sigma(x_{14})$ \\ \hline       $u_{14}(\tb{x})=\sigma\left(\Sigma^{12}(\tb{x})\right)\sigma(x_{14})$ &
$h_{21}(\tb{x})=\sigma\left(\Sigma^{16}(\tb{x})\right)\sigma(x_{21})$  & $u_{21}(\tb{x})=\sigma\left(\Sigma^{20}(\tb{x})\right)\sigma(x_{21})$ \\ \hline
$h_{28}(\tb{x})=\sigma\left(\Sigma^{22}(\tb{x})\right)\sigma(x_{28})$  &  & $u_{28}(\tb{x})=\sigma\left(\Sigma^{26}(\tb{x})\right)\sigma(x_{28})$  \\ 
 \hline
\end{tabular}}
\caption{Displacement functions obtained from the group--theoretical relations in  \fullref{Table5}.}\label{table:2:10}
\end{center}
\end{table}
\end{proposition}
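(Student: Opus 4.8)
The plan is to turn each of the $60$ group--theoretical relations catalogued in \fullref{lem:2:1} into one of the displacement bounds of \fullref{table:2:6}--\fullref{table:2:10} by feeding it, together with the measure decomposition of \fullref{thm:2:1}, into \fullref{lem1.2}. Fix $z\in\hyp$ and a relation $(\gamma,s(\gamma),S(\gamma))$ of $\Gamma_{\mathcal{D}}$ with $\gamma\in\Gamma_{\jrg}$, and let $\{\nu_{\psi}\}_{\psi\in\Psi}$ be the Borel measures furnished by \fullref{thm:2:1}. I would apply \fullref{lem1.2} to the loxodromic isometry $\gamma$, the point $z$, the single measure $\nu=\nu_{s(\gamma)}$, and the constants $a=\nu_{s(\gamma)}(S_{\infty})$ and $b=\int_{S_{\infty}}(\lambda_{\gamma,z})^2\,d\nu_{s(\gamma)}$.

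First I would verify the three hypotheses of \fullref{lem1.2}. Part $(i)$ of \fullref{thm:2:1} gives $A_{z}=\sum_{\psi\in\Psi}\nu_{\psi}$, so $\nu_{s(\gamma)}\leq A_{z}$ as measures, which is hypothesis $(i)$; together with part $(ii)$ this bounds $a\leq A_{z}(S_{\infty})=1$, and hypotheses $(ii)$, $(iii)$ then hold as equalities by the very definition of $a$ and $b$. Part $(iii)$ of \fullref{thm:2:1} rewrites $b=1-\sum_{\psi\in S(\gamma)}\nu_{\psi}(S_{\infty})$, so $b\in[0,1]$ as well. Granting the non--degeneracy $\nu_{\psi}(S_{\infty})>0$ for every $\psi\in\Psi$ --- which I would deduce from geometric infiniteness, since then $\Lambda_{\Gamma\cdot z}=S_{\infty}$ and each cell $J_{\psi}$ of the decomposition in (\ref{dJ}) carries positive area measure --- both $a$ and the sum $\sum_{\psi\in S(\gamma)}\nu_{\psi}(S_{\infty})$ lie strictly between $0$ and $1$ (the former because a single mass is positive and less than the total mass $1$, the latter because $S(\gamma)$ is always a proper nonempty subset in the tables). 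Hence $a,b\in(0,1)$; in particular $a,b$ are neither both $0$ nor both $1$, so \fullref{lem1.2} applies and yields $e^{2d_{\gamma}z}\geq\sigma(a)/\sigma(b)$.

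Next I would simplify the right--hand side with the elementary reciprocal identity $\sigma(t)\,\sigma(1-t)=1$ for $t\in(0,1)$, which is immediate from $\sigma(t)=(1-t)/t$. Since $1-b=\sum_{\psi\in S(\gamma)}\nu_{\psi}(S_{\infty})$, this gives $1/\sigma(b)=\sigma\!\big(\sum_{\psi\in S(\gamma)}\nu_{\psi}(S_{\infty})\big)$, whence
\[
e^{2d_{\gamma}z}\ \geq\ \sigma\!\left(\sum_{\psi\in S(\gamma)}\nu_{\psi}(S_{\infty})\right)\sigma\!\left(\nu_{s(\gamma)}(S_{\infty})\right),
\]
exactly the shape of the right--hand side of (\ref{eqn:1:3}). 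Writing $\tb{m}=(\nu_{\xi\eta^{-1}\xi^{-1}}(S_{\infty}),\dots,\nu_{\xi^{-2}}(S_{\infty}))$, whose entries are ordered by $p$ and which lies in $\Delta^{27}$ by parts $(i)$ and $(ii)$ of \fullref{thm:2:1}, the factor $\sigma(\nu_{s(\gamma)}(S_{\infty}))$ becomes $\sigma(x_{p(s(\gamma))})$ at $\tb{x}=\tb{m}$, and matching the index set $p(S(\gamma))$ against the definitions in (\ref{Ind:2:1}) and (\ref{sigma}) turns $\sigma(\sum_{\psi\in S(\gamma)}\nu_{\psi}(S_{\infty}))$ into precisely one of $\sigma(\Sigma_{J}^{i})$, $\sigma(\Sigma_{i}^{J})$, $\sigma(\Sigma_{I}^{j})$ or $\sigma(\Sigma^{n})$. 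For instance, the relation in \fullref{Table1} with $\gamma=\xi\eta\xi^{-1}$ has $p(s(\gamma))=1$ and $p(S(\gamma))=\{1,\dots,7\}=J_1$, producing $f_{1}(\tb{m})=\sigma(\Sigma_{J}^{1}(\tb{m}))\sigma(x_1)$; the relation in \fullref{Table3} with $\gamma=\xi^{-1}$, $s(\gamma)=\xi\eta^{-1}\xi^{-1}$ and $S(\gamma)=\Psi-\Psi_{3}$ gives $g_{1}(\tb{m})=\sigma(\Sigma_{I}^{3}(\tb{m}))\sigma(x_1)$; and the relation in \fullref{Table5} with $\gamma=\xi\eta^{-1}\xi^{-1}$, $s(\gamma)=\xi^{2}$ and $S(\gamma)=\Psi-\{\xi\eta^{-1}\xi\}$ gives $h_{7}(\tb{m})=\sigma(\Sigma^{2}(\tb{m}))\sigma(x_7)$.

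Running this identical computation over all $60$ relations of \fullref{lem:2:1} reproduces the entire list in \fullref{table:2:6}--\fullref{table:2:10}; and since every $\gamma\in\Gamma_{\jrg}$ heads at least one relation, $e^{2d_{\gamma}z}$ is bounded below by at least one of these functions for each $\gamma$, which is the assertion. The only genuinely analytic ingredient beyond \fullref{thm:2:1} and \fullref{lem1.2} is the reciprocal identity for $\sigma$; the hard part will be organisational rather than conceptual, namely checking relation by relation that $p(S(\gamma))$ matches the named index set of the corresponding $\Sigma$, and pinning down the non--degeneracy $\nu_{\psi}(S_{\infty})>0$ that licenses the use of \fullref{lem1.2}.
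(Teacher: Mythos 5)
Your overall architecture is exactly the paper's: feed each of the $60$ relations of \fullref{lem:2:1}, together with the measures of \fullref{thm:2:1}, into \fullref{lem1.2} with $\nu=\nu_{s(\gamma)}$, $a=\nu_{s(\gamma)}(S_{\infty})$, $b=\int_{S_{\infty}}(\lambda_{\gamma,z})^2d\nu_{s(\gamma)}$, use \fullref{thm:2:1}~($iii$) to rewrite $1-b$ as $\sum_{\psi\in S(\gamma)}\nu_{\psi}(S_{\infty})$, apply the identity $\sigma(t)\sigma(1-t)=1$ to get $e^{2d_{\gamma}z}\geq\sigma\bigl(\sum_{\psi\in S(\gamma)}m_{p(\psi)}\bigr)\sigma\bigl(m_{p(s(\gamma))}\bigr)$, and then replace the constants $m_{p(\psi)}$ by the variables $x_{p(\psi)}$, matching $p(S(\gamma))$ against the index sets in (\ref{Ind:2:1}). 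Your sample verifications ($f_1$, $g_1$, $h_7$) are correct, and your observation that positivity of all the masses, together with $S(\gamma)$ being a proper nonempty subset of $\Psi$, forces $a,b\in(0,1)$ is exactly how the paper licenses \fullref{lem1.2}.

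The one genuine gap is your justification of the non--degeneracy $0<\nu_{\psi}(S_{\infty})<1$. You propose to deduce positivity from geometric infiniteness on the grounds that ``each cell $J_{\psi}$ of the decomposition carries positive area measure,'' but this misreads the construction: the decomposition (\ref{dJ}) partitions the group $\Gamma$, not the sphere $S_{\infty}$, and the measures $\nu_{\psi}$ are \emph{not} restrictions of $A_z$ to geometrically defined subsets of $S_{\infty}$ --- they arise from a weak--$*$ limit procedure in the Culler--Shalen machinery, so a priori nothing prevents some $\nu_{\psi}$ from vanishing or carrying full mass. Positivity must instead be extracted from the measure--theoretic relations themselves, which is what the paper does in two steps: for every $\psi\in\Psi$ whose inverse also lies in $\Psi$, the argument of \cite[Proposition 2.1]{Y2} gives $0<\nu_{\psi}(S_{\infty})<1$; for the eight exceptional words $\xi\eta^{\pm2}$, $\eta^{-1}\xi^{\pm2}$, $\eta\xi^{\pm2}$, $\xi^{-1}\eta^{\pm2}$ (whose inverses are not in $\Psi$), one assumes $\nu_{\xi\eta^{-2}}(S_{\infty})=0$, applies \fullref{thm:2:1}~($iii$) to the relation in \fullref{Table2} with $s(\gamma)=\xi\eta^{-2}$ to force $\nu_{\psi}(S_{\infty})=0$ for all $\psi\in\Psi_1$, and this contradicts the positivity already established for $\xi\eta^{-1}\xi^{-1}\in\Psi_1$ (the case $\nu_{\xi\eta^{-2}}(S_{\infty})=1$ is ruled out by \fullref{thm:2:1}~($i$), since it would annihilate all the other measures). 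You flagged this step as needing to be pinned down, but the mechanism you sketch for it would not close it; the relation--based argument above is what actually does.
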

\begin{proof}
Let $\{\nu_{\psi}\}_{\psi\in\Psi}$ be the family of Borel measures on $S_{\infty}$ given by  \fullref{thm:2:1}. 
Since every isometry $\psi\in\Psi$ other than $\xi\eta^{-2}$, $\xi\eta^2$, $\eta^{-1}\xi^{-2}$, $\eta^{-1}\xi^2$, $\eta\xi^{-2}$, $\eta\xi^2$, $\xi^{-1}\eta^{-2}$ and $\xi^{-1}\eta^2$ has an inverse in $\Psi$, an analogous argument used in \cite[Proposition 2.1]{Y2} shows that $0<\nu_{\psi}(S_{\infty})<1$ for these isometries.

It is clear that $\nu_{\xi\eta^{-2}}(S_{\infty})\neq 1$. Because otherwise we get $\nu_{\psi}(S_{\infty})=0$ for every $\psi\in\Psi-\{\xi\eta^{-2}\}$ by \fullref{thm:2:1} (i), a contradiction. Assume that $\nu_{\xi\eta^{-2}}(S_{\infty})=0$. By the group--theoretical relation in \fullref{Table2} (2) and \fullref{thm:2:1} (iii), we derive that $\nu_{\psi}(S_{\infty})=0$ for every $\psi\in\Psi_1=\{\xi\eta^{-1}\xi^{-1},\xi\eta^{-1}\xi,\xi\eta^{-2}\}$. This is a contradiction. By using the group--theoretical relations in \fullref{Table2} together with similar arguments given above for $\xi\eta^{-2}$, we conclude that  $0<\nu_{\psi}(S_{\infty})<1$ for every $\psi\in\Psi$.

Let $m_{p(\psi)}=\int_{S_{\infty}} d\nu_{\psi}$ for the bijection $p$ in (\ref{list:1:4}). Also let $\tb{m}=(m_1,m_2,\dots,m_{28})\in\Delta^{27}$. Since $0<\nu_{\psi}(S_{\infty})<1$ for every $\psi\in\Psi$, we see by \fullref{thm:2:1} ($iii$) and ($ii$) that  $\nu_{s(\gamma)}(S_{\infty})$ and $\int_{S_{\infty}}\lambda^{2}_{\gamma,z_0}d\mu_{V_{s(\gamma)}}$ satisfy the hypothesis of \fullref{lem1.2} for each group-theoretical relation $(\gamma,s(\gamma),S(\gamma))$ of $\Gamma_{\mathcal{D}}$ for $\gamma\in\Gamma_{\jrg}$. By setting $\nu=\nu_{s(\gamma)}$, $a=\nu_{s(\gamma)}(S_{\infty})$ and $b=\int_{S_{\infty}}\lambda^{2}_{\gamma,z_0}d\mu_{V_{s(\gamma)}}$ in \fullref{lem1.2} we obtain the lower bound
\begin{equation}\label{eqn4}
\scalebox{.98}{$
\begin{array}{c}
e^{2d_{\gamma}z} \geq 
\sigma\left(\sum_{\psi\in S(\gamma)}m_{p(\psi)}\right)\sigma\left(m_{p(s(\gamma))}\right)
\end{array}
$}
\end{equation}
for each group--theoretical relation $(\gamma,s(\gamma), S(\gamma))$ of $\Gamma_{\mathcal{D}}$ so that $\gamma\in\Gamma_{\jrg}$. We replace each constant $m_{p(\psi)}$ appearing in (\ref{eqn4}) with the variable $x_{p(\psi)}$ which gives the functions listed in \fullref{table:2:6}, \fullref{table:2:7}, \fullref{table:2:8}, \fullref{table:2:9} and \fullref{table:2:10} proving the proposition.
\end{proof}

Let $\mathcal{G}=\{f_1,\dots,f_{28},g_1,g_3,\dots,g_{27},h_1,h_5,\dots,h_{27},u_7,u_{14},\dots,u_{28}\}$ be the set of all displacement functions given in the tables in the proposition above. Let $\mathcal{F}=\{f_1,\dots,f_{28}\}$. Let  $G$ be the continuous function defined as
\begin{equation}\label{G}
\begin{array}{lllll}
G&: & \Delta^{27} &\ra&\mathbb{R}\\
  &  & \tb{x} &\mapsto&\max\{f(\tb{x}):f\in\mathcal{G}\}.
\end{array}
\end{equation}
In the next section we calculate $\inf_{\tb{x}\in\Delta^{27}}G(\tb{x})$ by  using the subset $\mathcal{F}$ of functions in $\mathcal{G}$. 

We finish \fullref{S2} by listing explicit formulas of some of the displacement functions from each group $\{f_l\}$, $\{g_i\}$, $\{h_j\}$ and $\{u_k\}$ in $\mathcal{G}$ as examples to clarify the use of compact forms in these functions. For the index sets $J_1=\{1,2,3,4,5,6,7\}$, $J_2=\{8,9,10,11,12,13,14\}$ and $I_3=\{8,9,10\}$ we have
\[
f_9(\tb{x})=\sigma(\Sigma_J^2(\tb{x}))\sigma(x_9)=\frac{1-x_8-x_9-x_{10}-x_{11}-x_{12}-x_{13}-x_{14}}{x_8+x_9+x_{10}+x_{11}+x_{12}+x_{13}+x_{14}}\cdot\frac{1-x_9}{x_9},
\]
\[
f_7(\tb{x})=\sigma(\Sigma^J_1(\tb{x}))\sigma(x_7)=\frac{1-x_8-x_9-\dots-x_{27}-x_{28}}{x_8+x_9+\dots+x_{27}+x_{28}}\cdot\frac{1-x_7}{x_7},
\]
\[
g_1(\tb{x})=\sigma(\Sigma_I^3(\tb{x}))\sigma(x_1)=\frac{1-x_1-x_2-\dots-x_7-x_{11}-\dots-x_{28}}{x_1+x_2+\dots+x_7+x_{11}+\dots+x_{28}}\cdot\frac{1-x_1}{x_1},
\] 
\[
g_{18}(\tb{x})=\sigma(\Sigma^{7}(\tb{x}))\sigma(x_{18})=\frac{1-x_1-x_2-\dots-x_6-x_{8}-\dots-x_{28}}{x_1+x_2+\dots+x_6+x_{8}+\dots+x_{28}}\cdot\frac{1-x_{18}}{x_{18}},
\]
\[
h_{1}(\tb{x})=\sigma(\Sigma^{28}(\tb{x}))\sigma(x_{1})=\frac{1-x_1-x_2-x_3-\dots-x_{27}}{x_1+x_2+x_{3}+\dots+x_{27}}\cdot\frac{1-x_{1}}{x_{1}},
\]  
\[
u_{7}(\tb{x})=\sigma(\Sigma^{6}(\tb{x}))\sigma(x_{7})=\frac{1-x_1-\dots-x_5-x_7-\dots-x_{28}}{x_1+\dots+x_5+x_7+\dots+x_{28}}\cdot\frac{1-x_{7}}{x_{7}}.
\] 
Note that in the formula of $f_9$ only variables enumerated by the elements of $J_2$ appear in the first multiple.  In the formula of $f_7$, variables enumerated by the elements of $J_1$ are missing in the first factor. Similarly in the formula of $g_1$ variables enumerated by the elements of $I_3$ are missing. In the formulas of $g_{18}$, $h_{1}$ and $u_7$, variables $x_{7}$, $x_{28}$ and $x_6$ are missing, respectively, in the first quotients.



\section{Infima of the Maximum of the Functions in $\mathcal{G}$ on $\Delta^{27}$}\label{S3}

In this section we will mostly be dealing  with the functions in $\mathcal{F}=\{f_l\}_{l\in I}$, where $I=\{1,2,\dots, 28\}$. We will show that $\inf_{\tb{x}\in\Delta^{27}}G(\tb{x})=\inf_{\tb{x}\in\Delta^{27}}F(\tb{x})$ (see \fullref{thm:3:1} and \fullref{thm:3:2}), such that  $F$ is the continuous function which has the formula
\begin{equation}\label{F}
\begin{array}{lllll}
F &\co & \Delta^{27} &\to &\mathbb{R}\\
  &  & \tb{x} &\mapsto &\max\left(f_1(\tb{x}),f_2(\tb{x}),\dots,f_{28}(\tb{x})\right).
\end{array}
\end{equation}
Therefore, it is enough to calculate $\inf_{\tb{x}\in\Delta^{27}}F(\tb{x})$. We start with the following lemma:

\begin{lemma}\label{lemtwo}
If $F$ is the function defined in (\ref{F}), then $\inf_{\tb{x}\in\Delta^{27}}F(\tb{x})$ is attained in $\Delta^{27}$ and contained in the interval $[1,\alpha]$, where $\alpha=24.8692...$ the only real root of  the polynomial $21 x^4 - 496 x^3 - 654 x^2 + 24 x + 81$ that is greater than $9$.
\end{lemma}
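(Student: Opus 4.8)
The plan is to establish the three assertions separately: that the infimum is attained, that it is at least $1$, and that it is at most $\alpha$.

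For attainment I would first record that $F=\max_{l}f_l$ is continuous on the open simplex $\Delta^{27}$, being a maximum of finitely many continuous functions (each $f_l=\sigma(\cdot)\,\sigma(x_l)$ with $\sigma(x)=1/x-1$). The key point is that $F$ is proper, i.e. $F(\tb{x})\to\infty$ as $\tb{x}$ approaches $\pr\Delta^{27}$. Along any sequence $\tb{x}_n$ converging to a boundary point, let $Z$ be the (nonempty) set of indices whose coordinate tends to $0$ and $P$ its complement, which is nonempty since the coordinates sum to $1$. I would exhibit an index $j\in Z$ whose first-factor sum $A_j$ omits some coordinate in $P$; for such a $j$ one has $A_j\le 1-x_p$ for a $p\in P$, so $\sigma(A_j)$ stays bounded below while $\sigma(x_j)\to\infty$, forcing $f_j\to\infty$ and hence $F\to\infty$. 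The existence of such a $j$ is a finite combinatorial fact about the supports listed in \fullref{dispfunc}: were every $j\in Z$ to satisfy $P\subseteq\tnr{supp}(A_j)$, a short propagation argument through the incidences of the index sets $I_k$ and $J_i$ (tracing which first-factor supports omit a fixed coordinate) would force $P=\{1,\dots,28\}$, contradicting $Z\ne\emptyset$. Properness then makes the sublevel sets compact, so $F$ attains its infimum in $\Delta^{27}$.

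For the lower bound I would isolate a single function. Taking $f_7=\sigma(\Sigma_1^J)\sigma(x_7)$ and writing $\sigma(\Sigma_1^J)=\Sigma_J^1/(1-\Sigma_J^1)$, the inequality $f_7\ge 1$ is equivalent to $\Sigma_J^1\ge x_7$, which holds trivially because $x_7$ is one of the summands of $\Sigma_J^1=x_1+\dots+x_7$. Hence $F(\tb{x})\ge f_7(\tb{x})\ge 1$ pointwise, so $\inf_{\Delta^{27}}F\ge 1$; any of $f_{14},f_{21},f_{28}$ serves equally well.

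The substantive step is the upper bound, and this is where the quartic enters. Because the bound $\alpha$ is in fact sharp, a witness with $F\le\alpha$ must essentially be the minimizer, so I would locate it by symmetry. The order-$4$ rotation $\rho\co\xi\mapsto\eta\mapsto\xi^{-1}\mapsto\eta^{-1}$ together with the swap $\tau\co\xi\leftrightarrow\eta$ acts on the $28$ words of (\ref{list:1:4}) and partitions the coordinate indices into four orbit-types: the eight $\mathcal F'$ indices $\{1,5,9,13,15,19,23,27\}$, the four square indices $\{7,14,21,28\}$, and two further octets. Assigning single values $s,t,u,v$ to the four types and imposing $8s+4t+8u+8v=1$ makes each block sum $\Sigma_J^i$ equal to $1/4$, and a direct evaluation collapses the $28$ functions $f_l$ to only four expressions, namely $3\,\sigma(s)$, $\tfrac13\sigma(t)$ and the two sharing first factor $(s+u+v)/(1-s-u-v)$ with second factors $\sigma(u)$ and $\sigma(v)$. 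Setting all four equal to a common value $\lambda$ forces $u=v$, and substituting $s=3/(3+\lambda)$, $t=1/(1+3\lambda)$ and $16u=1-8s-4t$ into the remaining equation yields precisely $21\lambda^4-496\lambda^3-654\lambda^2+24\lambda+81=0$; its root $\lambda=\alpha>9$ gives positive values $s,t,u,v$, hence a point $\xs\in\Delta^{27}$ at which every $f_l$ equals $\alpha$. Thus $F(\xs)=\alpha$ and $\inf_{\Delta^{27}}F\le\alpha$, closing the interval. I expect the main obstacle to be this upper bound: verifying that the symmetry reduction is exact, so that the four orbit-values genuinely make all $28$ functions coincide, and that the elimination produces the stated quartic with $\alpha$ as its only root above $9$. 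The boundary blow-up, by contrast, is a routine if slightly tedious finite check.
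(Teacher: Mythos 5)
Your proposal is correct and takes essentially the same route as the paper: attainment via blow-up of some $f_l$ along sequences approaching $\pr\Delta^{27}$ (the paper defers the details to \cite[Lemma 4.2]{Y1} and \cite[Lemma 3.1]{Y2}), the lower bound from a single function, and the upper bound by exhibiting a point where all $28$ functions equal $\alpha$ — your symmetric point with $s=3/(3+\alpha)$, $t=1/(1+3\alpha)$, $u=v=3(\alpha-1)/(21\alpha^2+14\alpha-3)$ is exactly the paper's witness $\ys$. The only difference is presentational: you derive $\ys$ and the quartic by the symmetry ansatz and elimination, which the paper simply asserts here and carries out later in \fullref{thm:3:1}.
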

\begin{proof}
To save space we refer the readers to  \cite[Lemma 4.2]{Y1} and \cite[Lemma 3.1]{Y2} for the details of the proof of the statement $\inf_{\tb{x}\in\Delta^{27}}F(\tb{x})=\min_{\tb{x}\in\Delta^{27}}F(\tb{x})$. Briefly, the equality follows from the observation that on any sequence in $\Delta^{27}$ that limits on the boundary of  $\Delta^{27}$ some of the functions in $\mathcal{F}$ approach to infinity. 

For some $l\in I=\{1,2,\dots,28\}$ we have $f_l(\tb{x})>1$ for every $\tb{x}\in\Delta^{27}$  which shows $\min_{\tb{x}\in\Delta^{27}}F(\tb{x})\geq 1$. Consider the point $\tb{y}^*=(y_1,y_2,\dots,y_{28})$ in $\Delta^{27}$ such that $y_l= 1/(1+3\alpha)= 0.0132...$ for $l\in\{7,14,21,28\}$, $y_l=3/(3+\alpha)= 0.1076...$ for $l\in\{1,5,9,13,15,19,23,27\}$ and $y_l=3(\alpha-1)/(21\alpha^2+14\alpha-3)= 0.0053...$ for indices $l\in\{2,6,8,12,16,20,22,26\}$ and $l\in\{3,4,10,11,17,18,24,25\}$. Then we see that $f_l(\ys)=\alpha$ for every $l\in I$. This completes the proof. 
\end{proof}

In the rest of this text we will consider $\Delta^{27}$ as a sub--manifold of $\mathbb{R}^{28}$.
The tangent space $T_{\tb{x}}\Delta^{27}$ at any $\tb{x}\in\Delta^{27}$ consists of vectors whose coordinates sum to $0$. Note that each displacement function $f_i$ for $i\in I$ is smooth in an open neighbourhood of $\Delta^{27}$. Therefore, the directional derivative of $f_i$ in the direction of any $\vec v\in T_{\tb{x}}\Delta^{27}$ is given by $\nabla f_i(\tb{x})\cdot\vec v$ for any $i\in I=\{1,2,\dots,28\}$. The notation $\xs=(x_1^*,x_2^*,\dots,x_{28}^*)$ will be used to denote a point at which the infimum of $F$ is attained on $\Delta^{27}$. We shall use $\alpha_*$ to denote the infimum of the maximum of the functions in $\mathcal{F}$ on $\Delta^{27}$, ie 
\[
\alpha_*=\min_{\tb{x}\in\Delta^{27}}F(\tb{x}).
\]

The displacement functions $\{f_l\}_{l\in J}$ for $J=\{1,5,9,13,15,19,23,27\}$ in $\mathcal{F}$ play a special role in computing $\alpha_*$. In particular we have the following statement:
\begin{lemma}\label{firstfive}
Let $\xs\in\Delta^{27}$ so that $F(\xs)=\alpha_*$. We have $f_l(\xs)=\alpha_*$ for some $l\in J$.
\end{lemma}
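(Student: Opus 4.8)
The plan is to argue by contradiction by exhibiting a single explicit descent direction in the tangent space $T_{\xs}\Delta^{27}$. Suppose, contrary to the claim, that $f_l(\xs)<\alpha_*$ for every $l\in J=\{1,5,9,13,15,19,23,27\}$. Then the active set $A=\{l\in I\co f_l(\xs)=\alpha_*\}$ is contained in $I-J$. By (the proof of) \fullref{lemtwo} the minimum of $F$ is attained at an interior point of $\Delta^{27}$, so every coordinate $x_l^*$ is strictly positive and $\xs$ may be perturbed along any tangent vector in both directions while staying in $\Delta^{27}$.

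Next I would record the two combinatorial features of the enumeration (\ref{list:1:4}) that make the argument work: each three--element block $I_j$ ($j=1,\dots,8$) contains exactly one index belonging to $J$, whereas each seven--element block $J_i$ ($i=1,2,3,4$) contains exactly two indices belonging to $J$. With this in hand I would introduce the direction $\vec v=(v_1,\dots,v_{28})$ defined by $v_l=2$ for $l\in I-J$ and $v_l=-5$ for $l\in J$. Since $|I-J|=20$ and $|J|=8$, we have $\sum_{l}v_l=20\cdot 2-8\cdot 5=0$, so $\vec v\in T_{\xs}\Delta^{27}$.

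The heart of the proof is to verify that $\nabla f_l(\xs)\cdot\vec v<0$ for every $l\in I-J$, and hence for every $l\in A$. Write $f_l=\sigma(P_l)\sigma(x_l)$, where $P_l$ is one of the sums $\Sigma_I^j$ or $\Sigma_i^J$ from (\ref{sigma}) (by \fullref{dispfunc} no $f_l$ with $l\in I-J$ has a factor of the form $\sigma(\Sigma^n)$). The contribution of the second factor is $\sigma(P_l)\sigma'(x_l)v_l<0$, because $v_l=2>0$ and $\sigma'<0$. For the first factor one computes $\nabla P_l\cdot\vec v=-\sum_{k\in B}v_k$ (using $\sum_k v_k=0$), where $B$ is the block subtracted in $P_l$: if $P_l=\Sigma_I^j$ then $B=I_j$ has $\vec v$--sum $2\cdot 2+(-5)=-1$, giving $\nabla P_l\cdot\vec v=1>0$; if $P_l=\Sigma_i^J$ then $B=J_i$ has $\vec v$--sum $5\cdot 2+2\cdot(-5)=0$, giving $\nabla P_l\cdot\vec v=0$. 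Since $\sigma'(P_l)<0$, the first factor contributes a term that is strictly negative in the first case and zero in the second, so in both cases $\nabla f_l(\xs)\cdot\vec v<0$. Consequently, for small $t>0$ the point $\xs+t\vec v$ lies in the open simplex, every active $f_l$ drops strictly below $\alpha_*$, and each non--active function stays below $\alpha_*$ by continuity; hence $F(\xs+t\vec v)<\alpha_*=F(\xs)$, contradicting the minimality of $\xs$ and proving the lemma.

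The only real work --- and the single point where the specific labelling (\ref{list:1:4}) enters --- is the verification of the two block--counting facts together with the observation that one uniform weight $-5$ on the indices of $J$ simultaneously forces $\nabla P_l\cdot\vec v\ge 0$ for all twenty functions of $\mathcal F-\mathcal F'$. Once this uniform sign is secured, the strictly negative contribution of the $\sigma(x_l)$ factor closes the estimate with no further case analysis, and the functions of $\mathcal F'$ require no attention at all, since they are assumed to lie strictly below $\alpha_*$ at $\xs$.
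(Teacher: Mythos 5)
Your proof is correct and follows essentially the same route as the paper: assume $f_l(\xs)<\alpha_*$ for all $l\in J$ and exhibit an explicit descent direction in $T_{\xs}\Delta^{27}$ along which every $f_l$ with $l\in I-J$ strictly decreases, so that for small $t>0$ all functions in $\mathcal{F}$ drop below $\alpha_*$, contradicting the minimality of $\alpha_*$ from \fullref{lemtwo}. The only difference is in the choice of vector: your uniform weights ($2$ on $I-J$, $-5$ on $J$) together with the block counts (each $I_j$ meets $J$ in exactly one index, each $J_i$ in exactly two) replace the paper's vector $\vec u$ with coordinates $1,-3,2,-2$ and its case-by-case verification via the constants $C_i^j$, yielding the same sign conclusion $\nabla f_l(\xs)\cdot\vec v<0$ with less computation.
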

\begin{proof}
Assume on the contrary that $f_l(\xs)<\alpha_*$ for every $l\in J$. Let $C_i^j$ denote the partial derivative of $f_i$ with respect to $x_j$ at $\xs=(x_1^*,x_2^*,\dots,x_{28}^*)$. We form the $20\times 28$ matrix below whose rows are $\nabla f_l(\xs)$ for $l\in I-J$:  
\begin{equation*}
\scalebox{0.65}{$
\left[
\begin{array}{cccccccccccccccccccccccccccc}
 C_2^1 & C_2^2 & C_2^1 & C_2^1 &C_2^1& C_2^1 & C_2^1 & C_2^1 & C_2^1 & C_2^1 & 0 & 0 & 0 & C_2^1 & C_2^1 & C_2^1 & C_2^1 & C_2^1 & C_2^1 & C_2^1 & C_2^1 & C_2^1 & C_2^1 & C_2^1 & C_2^1 & C_2^1 & C_2^1 & C_2^1 \\
 0 & 0 & C_3^3 & C_3^4 &C_3^4 & C_3^4 & C_3^4 & C_3^4 & C_3^4 & C_3^4 & C_3^4 & C_3^4 & C_3^4 & C_3^4 & C_3^4 & C_3^4 & C_3^4 & C_3^4 & C_3^4 & C_3^4 & C_3^4 & C_3^4 & C_3^4 & C_3^4 & C_3^4 & C_3^4 & C_3^4 & C_3^4 \\
C_4^1 &  C_4^1 &  C_4^1 & C_4^4 & 0 & 0 &  C_4^1 &  C_4^1 &  C_4^1 &  C_4^1 &  C_4^1 &  C_4^1 &  C_4^1 &  C_4^1 &  C_4^1 &  C_4^1 &  C_4^1 &  C_4^1 &  C_4^1 &  C_4^1 &  C_4^1 &  C_4^1 &  C_4^1 &  C_4^1 &  C_4^1 &  C_4^1 &  C_4^1 & C_4^1 \\
C_6^1 &  C_6^1 &  C_6^1 &  C_6^1& C_6^1 &  C_6^6 & C_6^1 &  C_6^1 &  C_6^1 &  C_6^1 &  C_6^1 &  C_6^1 &  C_6^1 &  C_6^1 &  C_6^1 &  C_6^1 &  C_6^1& 0 & 0 & 0 & C_6^1 &  C_6^1 &  C_6^1 &  C_6^1 &  C_6^1 &  C_6^1 &  C_6^1 &  C_6^1 \\
 0 & 0 & 0 & 0 & 0 & 0 & C_7^7  & C_7^8 & C_7^8 & C_7^8 &C_7^8 &C_7^8 &C_7^8 &C_7^8 &C_7^8 &C_7^8 &C_7^8 &C_7^8 &C_7^8 &C_7^8 &C_7^8 &C_7^8 &C_7^8 &C_7^8 &C_7^8 &C_7^8 &C_7^8 &C_7^8  \\
 C_8^1 & C_8^1 & C_8^1 & C_8^1 & C_8^1 & C_8^1 & C_8^1 & C_8^8 & C_8^1 & C_8^1 & C_8^1 & C_8^1 & C_8^1 & C_8^1 & C_8^1 & C_8^1 & C_8^1 & C_8^1 & C_8^1 & C_8^1 & C_8^1 & 0 & 0 & 0 & C_8^1 & C_8^1 & C_8^1 & C_8^1 \\
C_{10}^1 & C_{10}^1 & C_{10}^1 & C_{10}^1 & C_{10}^1 & C_{10}^1 & C_{10}^1 & 0 & 0 & C_{10}^{10} & C_{10}^1 & C_{10}^1 & C_{10}^1 & C_{10}^1 & C_{10}^1 & C_{10}^1 & C_{10}^1 & C_{10}^1 & C_{10}^1 & C_{10}^1 & C_{10}^1 & C_{10}^1 & C_{10}^1 & C_{10}^1 & C_{10}^1 & C_{10}^1 & C_{10}^1 & C_{10}^1 \\
 C_{11}^1 & C_{11}^1 & C_{11}^1 & C_{11}^1 & C_{11}^1 & C_{11}^1 & C_{11}^1& C_{11}^1 & C_{11}^1 & C_{11}^1  & C_{11}^{11} & 0 & 0 & C_{11}^1 & C_{11}^1 & C_{11}^1 & C_{11}^1 & C_{11}^1 & C_{11}^1 & C_{11}^1 & C_{11}^1 & C_{11}^1 & C_{11}^1 & C_{11}^1 & C_{11}^1 & C_{11}^1 & C_{11}^1 & C_{11}^1 \\
 0 & 0 & 0 & C_{12}^4 & C_{12}^4 & C_{12}^4 & C_{12}^4 & C_{12}^4 & C_{12}^4 & C_{12}^4 & C_{12}^4 & C_{12}^{12 }& C_{12}^4 & C_{12}^4 & C_{12}^4 & C_{12}^4 & C_{12}^4 & C_{12}^4 & C_{12}^4 & C_{12}^4 & C_{12}^4 & C_{12}^4 & C_{12}^4 & C_{12}^4 & C_{12}^4 & C_{12}^4 & C_{12}^4 & C_{12}^4 \\
 C_{14}^1 & C_{14}^1 & C_{14}^1 & C_{14}^1 & C_{14}^1 & C_{14}^1 & C_{14}^1 & 0 & 0 & 0 & 0 & 0 & 0 & C_{14}^{14}  & C_{14}^1 & C_{14}^1 & C_{14}^1 & C_{14}^1 & C_{14}^1 & C_{14}^1 & C_{14}^1 & C_{14}^1 & C_{14}^1 & C_{14}^1 & C_{14}^1 & C_{14}^1 & C_{14}^1 & C_{14}^1 \\
 C_{16}^1 &  C_{16}^1 &  C_{16}^1 &  C_{16}^1 &  C_{16}^1 &  C_{16}^1 &  C_{16}^1 &  C_{16}^1 &  C_{16}^1 &  C_{16}^1 &  C_{16}^1 &  C_{16}^1 &  C_{16}^1 &  C_{16}^1 &  C_{16}^1 &  C_{16}^{16} &  C_{16}^1 &  C_{16}^1 &  C_{16}^1 &  C_{16}^1 &  C_{16}^1 &  C_{16}^1 &  C_{16}^1 &  C_{16}^1 & 0 & 0 & 0 &  C_{16}^1 \\
C_{17}^1& C_{17}^1 & C_{17}^1 & C_{17}^1 & C_{17}^1 & C_{17}^1 & C_{17}^1 & C_{17}^1 & C_{17}^1 & C_{17}^1 & C_{17}^1 & C_{17}^1 & C_{17}^1 & C_{17}^1 & 0 & 0 & C_{17}^{17} & C_{17}^1 & C_{17}^1 & C_{17}^1 & C_{17}^1 & C_{17}^1 & C_{17}^1 & C_{17}^1 & C_{17}^1 & C_{17}^1 & C_{17}^1 & C_{17}^1 \\
 C_{18}^1 &  C_{18}^1 &  C_{18}^1 &  C_{18}^1 &  C_{18}^1 &  C_{18}^1 &  C_{18}^1 &  C_{18}^1 &  C_{18}^1 &  C_{18}^1 &  C_{18}^1 &  C_{18}^1 &  C_{18}^1 &  C_{18}^1 &  C_{18}^1 &  C_{18}^1 &  C_{18}^1 &  C_{18}^{18} & 0 & 0 &  C_{18}^1 &  C_{18}^1 &  C_{18}^1 &  C_{18}^1 &  C_{18}^1 &  C_{18}^1 &  C_{18}^1 &  C_{18}^1 \\
 C_{20}^{1}  & C_{20}^{1} & C_{20}^{1} & 0 & 0 & 0 & C_{20}^{1} & C_{20}^{1} & C_{20}^{1} & C_{20}^{1} & C_{20}^{1} & C_{20}^{1} & C_{20}^{1} & C_{20}^{1} & C_{20}^{1} & C_{20}^{1} & C_{20}^{1} & C_{20}^{1} & C_{20}^{1} & C_{20}^{20} & C_{20}^{1} & C_{20}^{1} & C_{20}^{1} & C_{20}^{1} & C_{20}^{1} & C_{20}^{1} & C_{20}^{1} & C_{20}^{1} \\
 C_{21}^{1} & C_{21}^{1} & C_{21}^{1} & C_{21}^{1} & C_{21}^{1} & C_{21}^{1} & C_{21}^{1} & C_{21}^{1} & C_{21}^{1} & C_{21}^{1} & C_{21}^{1} & C_{21}^{1} & C_{21}^{1} & C_{21}^{1} & 0 & 0 & 0 & 0 & 0 & 0 & C_{21}^{21}  & C_{21}^{1} & C_{21}^{1} & C_{21}^{1} & C_{21}^{1} & C_{21}^{1} & C_{21}^{1} & C_{21}^{1} \\
 C_{22}^{1} & C_{22}^{1} & C_{22}^{1} & C_{22}^{1} & C_{22}^{1} & C_{22}^{1} & C_{22}^{1} & 0 & 0 & 0 & C_{22}^{1} & C_{22}^{1} & C_{22}^{1} & C_{22}^{1} & C_{22}^{1} & C_{22}^{1} & C_{22}^{1} & C_{22}^{1} & C_{22}^{1} & C_{22}^{1} & C_{22}^{1} & C_{22}^{22} & C_{22}^{1} & C_{22}^{1} & C_{22}^{1} & C_{22}^{1} & C_{22}^{1} & C_{22}^{1} \\
 C_{24}^{1} & C_{24}^{1} & C_{24}^{1} & C_{24}^{1} & C_{24}^{1} & C_{24}^{1} & C_{24}^{1} & C_{24}^{1} & C_{24}^{1} & C_{24}^{1} & C_{24}^{1} & C_{24}^{1} & C_{24}^{1} & C_{24}^{1} & C_{24}^{1} & C_{24}^{1} & C_{24}^{1} & C_{24}^{1} & C_{24}^{1} & C_{24}^{1} & C_{24}^{1} & 0 & 0 & C_{24}^{24} & C_{24}^{1} & C_{24}^{1} & C_{24}^{1} & C_{24}^{1} \\
C_{25}^{1} & C_{25}^{1} & C_{25}^{1} & C_{25}^{1} & C_{25}^{1} & C_{25}^{1} & C_{25}^{1} & C_{25}^{1} & C_{25}^{1} & C_{25}^{1} & C_{25}^{1} & C_{25}^{1} & C_{25}^{1} & C_{25}^{1} & C_{25}^{1} & C_{25}^{1} & C_{25}^{1} & C_{25}^{1} & C_{25}^{1} & C_{25}^{1} & C_{25}^{1} & C_{25}^{1} & C_{25}^{1} & C_{25}^{1} & C_{25}^{25} & 0 & 0 & C_{25}^{1} \\
C_{26}^{1} & C_{26}^{1} & C_{26}^{1} & C_{26}^{1} & C_{26}^{1} & C_{26}^{1} & C_{26}^{1} & C_{26}^{1} & C_{26}^{1} & C_{26}^{1} & C_{26}^{1} & C_{26}^{1} & C_{26}^{1} & C_{26}^{1}  & 0 & 0 & 0 & C_{26}^{1} & C_{26}^{1} & C_{26}^{1} & C_{26}^{1} & C_{26}^{1} & C_{26}^{1} & C_{26}^{1} & C_{26}^{1} & C_{26}^{26} & C_{26}^{1} & C_{26}^{1} \\
 C_{28}^1 & C_{28}^1 & C_{28}^1 & C_{28}^1 & C_{28}^1 & C_{28}^1 & C_{28}^1 & C_{28}^1 & C_{28}^1 & C_{28}^1 & C_{28}^1 & C_{28}^1 & C_{28}^1 & C_{28}^1 & C_{28}^1 & C_{28}^1 & C_{28}^1 & C_{28}^1 & C_{28}^1 & C_{28}^1 & C_{28}^1 & 0 & 0 & 0 & 0 & 0 & 0 & C_{28}^{28}  
\end{array}\right],$}
\end{equation*}
where the entries are given as follows:
\begin{equation*}
\begin{array}{ccc}
 \dis C_2^1=-\frac{\sigma(x_2^*)}{ \left(\Sigma_I^4(\xs)\right)^2}, & C_2^2=-\dis\frac{\sigma(x_2^*)}{\left(\Sigma_{I}^4(\xs)\right)^2}-\frac{\sigma\left(\Sigma_{I}^4(\xs)\right)}{(x_2^*)^2},  & C_3^3=-\dis \frac{\sigma\left(\Sigma_I^1(\xs)\right)}
{ \left(x_3^*\right)^2}\\
 C_3^4=-\dis\frac{\sigma(x_3^*)}
{ \left(\Sigma_I^1(\xs)\right)^2}, & C_4^1=-\dis\frac{\sigma(x_4^*)}
{ \left(\Sigma_I^2(\xs)\right)^2}, & C_4^4=-\dis \frac{\sigma\left(\Sigma_I^2(\xs)\right)}
{ \left(x_4^*\right)^2},
\end{array}
\end{equation*}
\begin{equation*}
\begin{array}{ccc}
C_6^1=-\dis\frac{\sigma(x_6^*)}
{\left(\Sigma_I^6(\xs)\right)^2}, & C_6^6=-\dis\frac{\sigma(x_6^*)}{\left(\Sigma_{I}^6(\xs)\right)^2}-\frac{\sigma\left(\Sigma_{I}^6(\xs)\right)}{(x_6^*)^2}, & C_7^7=-\dis\frac{\sigma\left(\Sigma^J_1(\xs)\right)}
{\left(x_7^*\right)^2}, \\
 C_7^8=-\dis\frac{\sigma(x_7^*)}
{\left(\Sigma_1^J(\xs)\right)^2}, & C_8^8=-\dis\frac{\sigma(x_8^*)}{\left(\Sigma_{I}^7(\xs)\right)^2}-\frac{\sigma\left(\Sigma_{I}^7(\xs)\right)}{(x_8^*)^2}, &
C_8^1=-\dis\frac{\sigma(x_8^*)}
{ \left(\Sigma_I^7(\xs)\right)^2},
\end{array}
\end{equation*}
\begin{equation*}
\begin{array}{ccc}
C_{10}^1=-\dis\frac{\sigma(x_{10}^*)}
{ \left(\Sigma_I^3(\xs)\right)^2}, & C_{10}^{10}=-\dis\frac{\sigma\left(\Sigma^3_I(\xs)\right)}
{\left(x_{10}^*\right)^2}, & C_{11}^1=-\dis\frac{\sigma(x_{11}^*)}
{\left(\Sigma_I^4(\xs)\right)^2},\\
 C_{11}^{11}=-\dis\frac{\sigma\left(\Sigma^4_I(\xs)\right)}
{\left(x_{11}^*\right)^2},  & C_{12}^{12}=-\dis\frac{\sigma(x_{12}^*)}{\left(\Sigma_{I}^1(\xs)\right)^2}-\frac{\sigma\left(\Sigma_{I}^1(\xs)\right)}{(x_{12}^*)^2}, & C_{12}^{4}=-\dis\frac{\sigma(x_{12}^*)}
{ \left(\Sigma_I^1(\xs)\right)^2},\\
 C_{14}^1= -\dis\frac{\sigma(x_{14}^*)}
{\left(\Sigma_2^J(\xs)\right)^2}, & C_{14}^{14}=-\dis\frac{\sigma\left(\Sigma^J_2(\xs)\right)}
{\left(x_{14}^*\right)^2},  & C_{16}^1=-\dis\frac{\sigma(x_{16}^*)}
{\left(\Sigma_I^8(\xs)\right)^2}, 
\end{array}
\end{equation*}
\begin{equation*}
\begin{array}{ccc}
C_{17}^1=-\dis\frac{\sigma(x_{17}^*)}
{ \left(\Sigma_I^5(\xs)\right)^2},  & C_{16}^{16}=-\dis\frac{\sigma(x_{16}^*)}{\left(\Sigma_{I}^8(\xs)\right)^2}-\frac{\sigma\left(\Sigma_{I}^8(\xs)\right)}{(x_{16}^*)^2},  &
C_{17}^{17}=-\dis\frac{\sigma\left(\Sigma^5_I(\xs)\right)}
{\left(x_{17}^*\right)^2},\\  C_{18}^1=-\dis\frac{\sigma(x_{18}^*)}
{\left(\Sigma_I^6(\xs)\right)^2},  & C_{18}^{18}=-\dis\frac{\sigma\left(\Sigma^6_I(\xs)\right)}
{ \left(x_{18}^*\right)^2}, & C_{20}^1=-\dis\frac{\sigma(x_{20}^*)}
{\left(\Sigma_I^2(\xs)\right)^2},
\end{array}
\end{equation*}
\begin{equation*}
\begin{array}{ccc}
C_{21}^{1}=-\dis\frac{\sigma(x_{21}^*)}
{\left(\Sigma_3^J(\xs)\right)^2}, & C_{20}^{20}=-\dis\frac{\sigma(x_{20}^*)}{\left(\Sigma_{I}^2(\xs)\right)^2}-\frac{\sigma\left(\Sigma_{I}^2(\xs)\right)}{(x_{20}^*)^2}, &  C_{21}^{21}=-\dis\frac{\sigma\left(\Sigma^J_3(\xs)\right)}
{\left(x_{21}^*\right)^2}, \\
C_{22}^{1}=-\dis\frac{\sigma(x_{22}^*)}
{\left(\Sigma_I^3(\xs)\right)^2}, & C_{22}^{22}=-\dis\frac{\sigma(x_{22}^*)}{\left(\Sigma_{I}^3(\xs)\right)^2}-\frac{\sigma\left(\Sigma_{I}^3(\xs)\right)}{(x_{22}^*)^2}, & C_{24}^{1}=-\dis\frac{\sigma(x_{24}^*)}
{ \left(\Sigma_I^7(\xs)\right)^2},
\end{array}
\end{equation*}
\begin{equation*}
\begin{array}{ccc}
  C_{24}^{24}=-\dis\frac{\sigma\left(\Sigma_I^7(\xs)\right)}
{ \left(x_{24}^*\right)^2}, &
C_{25}^{1}=-\dis\frac{\sigma(x_{25}^*)}
{\left(\Sigma_I^8(\xs)\right)^2}, & C_{25}^{25}=-\dis\frac{\sigma\left(\Sigma^8_I(\xs)\right)}
{\left(x_{25}^*\right)^2}, \\ C_{26}^{1}=-\dis\frac{\sigma(x_{26}^*)}
{\left(\Sigma_I^5(\xs)\right)^2}, & C_{26}^{26}=-\dis\frac{\sigma(x_{26}^*)}{\left(\Sigma_{I}^5(\xs)\right)^2}-\frac{\sigma\left(\Sigma_{I}^5(\xs)\right)}{(x_{26}^*)^2}  & C_{28}^{1}=-\dis\frac{\sigma(x_{28}^*)}
{ \left(\Sigma_4^J(\xs)\right)^2}, \\
  C_{28}^{28}=-\dis\frac{\sigma\left(\Sigma_4^J(\xs)\right)}
{\left(x_{28}^*\right)^2}.  & &
\end{array}
\end{equation*}
Consider the vector $\vec u\in T_{\xs}\Delta^{27}$ with the coordinates:
\begin{equation*}
 (\vec u)_i=\left\{\begin{array}{rl}
 1   & \tnr{if $i=2,3,4,6,7,8,10,11,12,16,17,18,20,21,22,24,25,26$,}\\
-3  & \tnr{if $i=5,9,13,19,23,27$,}\\
2  & \tnr{if $i=14,28$},\\
-2  & \tnr{if $i=1,15$}.
\end{array}\right.
\end{equation*}
For $l\in\{2,3,6,7,8,16,17,20,21,22\}$, $i\in\{14,28\}$, $j\in\{4,10,11,18,24,25\}$ and $k\in\{12,26\}$ we compute that 
\begin{equation*}
\nabla f_l(\xs)\cdot \vec u=C_l^l<0,\quad \nabla f_i(\xs)\cdot \vec u=2C_i^i<0,\quad\nabla f_j(\xs)\cdot \vec u=C_j^1+C_j^j<0,
\end{equation*}
\begin{equation*}
\nabla f_k(\xs)\cdot \vec u=C_k^4+C_k^k<0.
\end{equation*}
This implies that the values of $f_l$ for $l\in I-J$ decrease along a line segment in the direction of $\vec u$. For a sufficiently short distance along $\vec u$ the values of $f_l$ for $l\in J$ are smaller than $\alpha_*$. So there exists a point $\tb{z}\in\Delta^{27}$ such that  $f_l(\tb{z})<\alpha_*$ for every $l\in I=\{1,2,\dots,28\}$. This is a contradiction.  Hence, $f_l(\xs)=\alpha_*$ for some $l\in J=\{1,5,9,13,15,19,23,27\}$. 
\end{proof}

Let $\Delta=\{(x,y)\in\mathbb{R}^2\co x+y<1,\ 0<x, 0<y\}$. Introduce the function $g\co\Delta\to (0,1)$ defined by 
\begin{equation}\label{g}
\dis{g(x,y)=\frac{1-x-y}{x+y}\cdot\frac{1-y}{y}}.
\end{equation}
Given a displacement function $f_l$ in $\mathcal{F}$ for $l\in J=\{1,5,9,13,15,19,23,27\}$, it can be expressed as 
$$
f_l(\tb{x})=g\left(\Sigma_J^i(\tb{x})-x_l,x_l\right)$$ for some $i\in\{1,2,3,4\}$. 
The function $g$ was also used in \cite{Y2}. In fact  the following statement  \cite[Lemma 3.2]{Y2} was proved for $g$:
\begin{lemma}\label{convex2}
Let $C_g=\{(x,y)\in\Delta\co x+2y-xy-y^2<\tfrac{3}{4}\}$. Then $C_g$ is an open convex set and $g(x,y)$ is a strictly convex function on $C_g$. 
\end{lemma}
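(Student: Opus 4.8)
The plan is to treat the two clauses separately, using throughout that $C_g$ is cut out of $\Delta$ by the single polynomial inequality $\phi(x,y)<\tfrac34$, where $\phi(x,y)=x+2y-xy-y^2$. Openness is immediate: $\phi$ is a polynomial, hence continuous, so $\{\phi<\tfrac34\}$ is open and $C_g=\Delta\cap\{\phi<\tfrac34\}$ is open as an intersection of two open sets. For convexity I would observe that on $\Delta$ one has $\partial\phi/\partial x=1-y>0$, so $\phi$ is strictly increasing in $x$ and the inequality $\phi<\tfrac34$ is equivalent to $x<h(y)$, where $h(y)$ is the unique root in $x$ of $\phi(x,y)=\tfrac34$. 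Solving (most cleanly after the substitution $u=1-y$) gives $h(y)=(1-y)-\tfrac{1}{4(1-y)}$, and a direct computation yields $h''(y)=-\tfrac{1}{2(1-y)^3}<0$ for $0<y<1$. Thus $h$ is concave on $(0,1)$, so $\{(x,y)\co 0<y<1,\ x<h(y)\}$ is convex; intersecting with the convex set $\Delta$ shows $C_g$ is convex.

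The heart of the matter is strict convexity of $g$, and here I would exploit the product structure. Writing $s=x+y$ and $\sigma(t)=1/t-1$, one has $g(x,y)=\sigma(s)\sigma(y)$. Since $\partial_x s=\partial_y s=1$, differentiating gives $g_{xx}=\sigma''(s)\sigma(y)$, $g_{xy}=\sigma''(s)\sigma(y)+\sigma'(s)\sigma'(y)$ and $g_{yy}=\sigma''(s)\sigma(y)+2\sigma'(s)\sigma'(y)+\sigma(s)\sigma''(y)$. On $\Delta$ we have $0<s<1$ and $0<y<1$, where $\sigma>0$ and $\sigma''>0$, so $g_{xx}>0$ holds throughout $\Delta$. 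The key simplification is that the cross terms cancel in the determinant, leaving
\[
\det\mathrm{Hess}\,g=g_{xx}g_{yy}-g_{xy}^2=\sigma''(s)\sigma(s)\,\sigma''(y)\sigma(y)-\bigl(\sigma'(s)\sigma'(y)\bigr)^2.
\]
Substituting $\sigma(t)=(1-t)/t$, $\sigma'(t)=-1/t^2$, $\sigma''(t)=2/t^3$, this equals $\tfrac{1}{s^4y^4}\bigl[\,4(1-s)(1-y)-1\,\bigr]$.

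Finally I would match this expression against the defining inequality of $C_g$. Since $\tfrac{1}{s^4y^4}>0$, the Hessian determinant is positive exactly when $(1-s)(1-y)>\tfrac14$, i.e.\ when $(1-x-y)(1-y)>\tfrac14$; expanding $(1-x-y)(1-y)=1-x-2y+xy+y^2$ shows this is precisely $x+2y-xy-y^2<\tfrac34$, the condition defining $C_g$. Hence on $C_g$ both $g_{xx}>0$ and $\det\mathrm{Hess}\,g>0$, so the Hessian is positive definite and $g$ is strictly convex there.

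The main obstacle is simply the bookkeeping in computing the Hessian and carrying out the algebraic substitution without error; nothing is deep, but the determinant must collapse exactly. The one genuinely clever point, which is worth emphasising, is that the seemingly ad hoc region $C_g$ is engineered to be precisely the locus of positive definiteness, so that both clauses of the lemma—the convexity of the domain and the strict convexity of $g$ on it—reduce to the same algebraic identity $4(1-s)(1-y)>1$.
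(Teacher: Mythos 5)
Your proof is correct, and it is worth noting that the paper itself contains no argument for this lemma at all: it simply cites \cite[Lemma 3.2]{Y2}, so your write-up is a genuine self-contained replacement rather than a paraphrase of anything in this manuscript. I checked the computations and they all hold: with $s=x+y$ the cross terms do cancel in $g_{xx}g_{yy}-g_{xy}^2$, leaving $\sigma''(s)\sigma(s)\sigma''(y)\sigma(y)-\bigl(\sigma'(s)\sigma'(y)\bigr)^2=\bigl(4(1-s)(1-y)-1\bigr)/(s^4y^4)$; the inequality $(1-x-y)(1-y)>\tfrac14$ expands to exactly $x+2y-xy-y^2<\tfrac34$; $g_{xx}=\sigma''(s)\sigma(y)>0$ on all of $\Delta$; and your slice function is indeed $h(y)=(1-y)-\tfrac{1}{4(1-y)}$ with $h''(y)=-\tfrac{1}{2(1-y)^3}<0$, so the monotone-in-$x$ slicing argument for convexity of $C_g$ goes through (the set $\{x<h(y)\}$ with $h$ concave is convex, and intersecting with $\Delta$ preserves this). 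Your closing observation—that $C_g$ is precisely the positive-definiteness locus of the Hessian, so both clauses of the lemma hinge on the single inequality $4(1-s)(1-y)>1$—is exactly right and is the real content here. One small streamlining you might like: since $1-x-y$ and $1-y$ are positive affine functions on $\Delta$, the condition $(1-x-y)(1-y)>\tfrac14$ is the superlevel set of the concave function $\log(1-x-y)+\log(1-y)$, which gives convexity of $C_g$ in one line without solving for $h$; but your route is equally valid.
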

Therefore, by this lemma, each displacement function $f_l$ for $l\in J$ is a strictly convex function over the open convex subset 
\begin{equation}\label{C1}
C_{f_l}=\{\tb{x}=(x_1,\dots,x_{28})\in\Delta^{27}\co \Sigma(\tb{x})+2x_l-\Sigma(\tb{x})x_l-(x_l)^2<\tfrac{3}{4}\}
\end{equation}
of $\Delta^{27}$, where we set $\Sigma(\tb{x})=\Sigma_{J}^i(\tb{x})-x_l$ for a chosen $i\in\{1,2,3,4\}$ depending on $l$. 

If $C_{f_l}$ for $l\in J$ are as described above, then the subset $C=\dis{\cap_{l\in J}C_{f_l}}$ of $\Delta^{27}$ is nonempty. This is because, if we consider the point $\ys$ given in the proof of \fullref{lemtwo}, then $$\Sigma_J^i(\tb{y}^*)-y_l= 0.1423...$$ for every $i\in\{1,2,3,4\}$. We find that $\Sigma(\tb{y}^*)+2y_l-\Sigma(\tb{y}^*)y_l-(y_l)^2=0.3307...<\tfrac{3}{4}$ for every $l\in J$.  
Thus $\tb{y}^*$ is in $C$. 
Additionally we have $\xs=(x_1^*,x_2^*,\dots,x_{28}^*)\in C$ implied by the following two lemmas:
\begin{lemma}\label{unique5}
Let $\xs\in\Delta^{27}$ so that $\alpha_*=F(\xs)$. Then  $\xs\in C_{f_1}$, defined in (\ref{C1}), where \[
f_1(\tb{x})=\sigma(\Sigma_J^1)\sigma(x_1)=\frac{1-x_1-x_2-x_3-x_4-x_5-x_6-x_7}{x_1+x_2+x_3+x_4+x_5+x_6+x_7}\cdot\frac{1-x_1}{x_1}.
\]
\end{lemma}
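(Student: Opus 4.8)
The plan is to translate membership in $C_{f_1}$ into an elementary scalar inequality and then prove that inequality by a pigeonhole argument on the four ``block sums'' $\Sigma_J^1,\dots,\Sigma_J^4$. Write $s=\Sigma_J^1(\xs)=x_1^*+\cdots+x_7^*$ and $y=x_1^*$, so that the quantity $\Sigma(\xs)=\Sigma_J^1(\xs)-x_1^*$ appearing in (\ref{C1}) equals $s-y$. A direct expansion collapses the defining inequality of $C_{f_1}$:
\[
\Sigma(\xs)+2x_1^*-\Sigma(\xs)\,x_1^*-(x_1^*)^2=(s-y)+2y-(s-y)y-y^2=s+y-sy=1-(1-s)(1-y).
\]
Hence $\xs\in C_{f_1}$ is \emph{equivalent} to $(1-s)(1-y)>\tfrac14$. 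Since every coordinate of $\xs$ is nonnegative we have $y=x_1^*\le s$, so $1-y\ge 1-s$; therefore it suffices to prove the single scalar bound $s=\Sigma_J^1(\xs)<\tfrac12$, because then $(1-s)(1-y)\ge (1-s)^2>\tfrac14$. This reduction is the move that converts an opaque convexity-region membership into a one-variable estimate.

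To establish $s<\tfrac12$ I would argue by contradiction, using only the a priori ceiling $\alpha_*=F(\xs)\le\alpha=24.8692\ldots<25$ from \fullref{lemtwo} and the trivial bound $f_l(\xs)\le F(\xs)=\alpha_*$ for every $l$. Suppose $s\ge\tfrac12$. Because the blocks $J_1,\dots,J_4$ partition $\{1,\dots,28\}$, the remaining block sums satisfy $\Sigma_J^2(\xs)+\Sigma_J^3(\xs)+\Sigma_J^4(\xs)=1-s\le\tfrac12$, so at least one index $i\in\{2,3,4\}$ has $\Sigma_J^i(\xs)\le\tfrac16$. Each such block contains an index $l\in J=\{1,5,9,13,15,19,23,27\}$ (namely $9$ or $13$ when $i=2$, $15$ or $19$ when $i=3$, $23$ or $27$ when $i=4$), and for that index \fullref{dispfunc} gives $f_l(\xs)=\sigma\!\left(\Sigma_J^i(\xs)\right)\sigma(x_l^*)$. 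Since $l\in J_i$, the coordinate $x_l^*$ is one of the seven summands of $\Sigma_J^i(\xs)$, so $x_l^*\le\Sigma_J^i(\xs)\le\tfrac16$; as $\sigma$ is strictly decreasing, both $\sigma\!\left(\Sigma_J^i(\xs)\right)\ge\sigma(\tfrac16)=5$ and $\sigma(x_l^*)\ge\sigma(\tfrac16)=5$. Consequently
\[
f_l(\xs)=\sigma\!\left(\Sigma_J^i(\xs)\right)\sigma(x_l^*)\ge 25>\alpha\ge\alpha_*=F(\xs)\ge f_l(\xs),
\]
which is absurd. Hence $s<\tfrac12$, and by the first paragraph $\xs\in C_{f_1}$.

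The step I expect to carry the real weight is the algebraic reduction in the first paragraph: once the factorisation $1-(1-s)(1-y)$ is recognised, the claim is pinned to the clean threshold $s<\tfrac12$, and everything afterward is a short pigeonhole estimate resting on the crude numerical fact $\sigma(1/6)^2=25$ exceeding the ceiling $\alpha<25$ supplied by \fullref{lemtwo}. The only points that require a moment of care are the inequality $y\le s$, which legitimises replacing the two-variable condition by the one-variable bound, and the observation that $x_l^*\le\Sigma_J^i(\xs)$ holds automatically because $x_l^*$ is one of the coordinates constituting that block sum, so no appeal to interiority of $\xs$ is needed. I would finish by remarking that the identical argument, applied to the block $J_i$ containing each $l\in J$, proves $\xs\in C_{f_l}$ for all such $l$, which is precisely what \fullref{unique6} will need in order to conclude $\xs\in C=\bigcap_{l\in J}C_{f_l}$.
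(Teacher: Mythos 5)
Your proof is correct, and it takes a genuinely different and substantially shorter route than the paper's. The paper negates membership in $C_{f_1}$, splits into the three cases (A), (B), (C) of (\ref{ABC2}) governed by the threshold $N=\tfrac{1}{4}(3-\sqrt{3})$ (chosen precisely so that $\Sigma(\xs)<N$ and $x_1^*<N$ would keep the defining expression below $\tfrac34$), and eliminates each case through a long chain of numerical coordinate estimates (lower bounds $\Sigma_r^*>M/3$ or $M/4$, then bounds on $x_5^*$, $x_7^*$, $x_9^*$, \dots) invoking \fullref{lemtwo} and, in case (C), \fullref{convex3}; the whole analysis is then repeated symmetrically in \fullref{unique6}. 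You instead observe the factorisation $\Sigma(\xs)+2x_1^*-\Sigma(\xs)x_1^*-(x_1^*)^2=1-\bigl(1-\Sigma_J^1(\xs)\bigr)\bigl(1-x_1^*\bigr)$, so that membership in $C_{f_1}$ is equivalent to $\bigl(1-\Sigma_J^1(\xs)\bigr)\bigl(1-x_1^*\bigr)>\tfrac14$, and since $x_1^*\le\Sigma_J^1(\xs)$ this reduces to the single bound $\Sigma_J^1(\xs)<\tfrac12$; your pigeonhole then works: if $\Sigma_J^1(\xs)\ge\tfrac12$ some $\Sigma_J^i(\xs)\le\tfrac16$ for $i\in\{2,3,4\}$, each such block does contain an index $l\in J$ (indeed $\{9,13\}\subset J_2$, $\{15,19\}\subset J_3$, $\{23,27\}\subset J_4$) with $f_l(\xs)=\sigma\bigl(\Sigma_J^i(\xs)\bigr)\sigma(x_l^*)$ and $x_l^*\le\Sigma_J^i(\xs)\le\tfrac16$, whence $f_l(\xs)\ge\sigma(\tfrac16)^2=25>\alpha\ge\alpha_*=F(\xs)\ge f_l(\xs)$, a contradiction. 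I verified the load-bearing numerical fact: the quartic $21x^4-496x^3-654x^2+24x+81$ is negative at $x=24$ and positive at $x=25$, so $\alpha=24.8692\ldots<25$ and your margin, though thin ($25-\alpha\approx 0.13$), is real. What your route buys is economy and symmetry: the identical three-line argument gives $\xs\in C_{f_l}$ for every $l\in J$ at once, subsuming \fullref{unique6}, whereas the paper must redo its case analysis for each index. What the paper's heavier analysis buys is independence from that hair-thin margin: its threshold $N\approx 0.317$ and case-by-case estimates would survive a somewhat larger ceiling than $25$, while your argument hinges on the coincidence $\sigma(1/6)^2=25$ just exceeding $\alpha$ — a legitimate dependence here, since $\alpha<25$ is proven, but worth flagging as the one place where your proof is numerically fragile.
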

\begin{proof}
Assume on the contrary that $\xs\not\in C_{f_1}$. Then by the definition of $C_{f_1}$ we have 
\begin{equation}\label{ineq:3:1}
\sum_{l=2}^7x_l^*+\left(2-\sum_{l=2}^7x_l^*\right)x_1^*-(x_1^*)^2\geq\frac{3}{4}.
\end{equation}
Let us say $N=\tfrac{1}{4}(3-\sqrt{3})\approx 0.3170.$ Also let $\Sigma_1^*=\sum_{l=1}^{7}x_l^*=\Sigma_J^1(\tb{x}^*)$, $\Sigma_2^*=\sum_{l=8}^{14}x_l^*=\Sigma_J^2(\tb{x}^*)$, $\Sigma_3^*=\sum_{l=15}^{21}x_l^*=\Sigma_J^3(\tb{x}^*)$ and $\Sigma_4^*=\sum_{l=22}^{28}x_l^*=\Sigma_J^4(\tb{x}^*)$. Consider the cases:
\begin{equation}\label{ABC2}
\begin{array}{c}
(A)\ \ \Sigma(\xs)\geq N,\ x_1^*\geq N,\ (B)\ \ \Sigma(\xs)\geq N > x_1^*,\ (C)\ \ x_1^*\geq N > \Sigma(\xs),
\end{array}
\end{equation}
where $\Sigma(\xs)=\Sigma_J^1(\xs)-x_1^*=\sum_{l=2}^7x_l^*$.  Assume that (A) is the case. Note that $\Sigma_1^*\geq 2N$. Then we have
\begin{equation}\label{eqn:3:4}
\Sigma_2^*+\Sigma_3^*+\Sigma_4^*\leq M=1-2N\approx 0.3660.
\end{equation}
If $\Sigma_2^*\leq M/3\approx 0.1220$, using \fullref{lemtwo} and $\sigma(M/3)\sigma(x_l^*)\leq\sigma(\Sigma_2^*)\sigma(x_l^*)\leq\alpha$ we find for every $l\in\{9,13\}$  that 
\begin{equation*}
x_l^*\geq\frac{\sigma(M/3)}{(\alpha-1)+\sigma(M/3)}=\frac{3-M}{(\alpha -2) M+3}\approx 0.2317.
\end{equation*}
 Then we see that $x_9^*>\Sigma_2^*$, a contradiction. This implies that $\Sigma_2^*>M/3$. We can repeat this argument with $\Sigma_3^*$ and $\Sigma_4^*$ to show that $\Sigma_3^*>M/3$ and $\Sigma_4^*>M/3$. This is a contradiction. So (A) is not the case. 


Assume that (B) holds. Since we have $\Sigma(\xs)\geq N$, we obtain the following inequality
\begin{equation}\label{ineq:3:2}
x_1^*+\Sigma_2^*+\Sigma_3^*+\Sigma_4^*\leq M=1-N\approx 0.6830.
\end{equation}
If $\Sigma_2^*\leq M/4\approx 0.1707$, then by the inequality $\sigma(M/4)\sigma(x_l^*)\leq\sigma(\Sigma_2^*)\sigma(x_l^*)\leq\alpha$ we find for every $l\in\{9,13\}$  that 
\begin{equation}\label{eqnL}
x_l^*\geq\frac{\sigma(M/4)}{\alpha+\sigma(M/4)}=\frac{4-M}{(\alpha-2)M+4}\approx 0.1691.
\end{equation}
Note that $x_9^*+x_{13}^*>\Sigma_2^*$, a contradiction. So we get $\Sigma_2^*>M/4$. Similar arguments for $\Sigma_3^*$ and $\Sigma_4^*$ show that  $\Sigma_3^*>M/4$ and $\Sigma_4^*>M/4$. Then we compute from (\ref{ineq:3:2}) that $x_1^*\leq M/4$. By (\ref{ineq:3:1}) we calculate that 
 \begin{equation}\label{Sig}
\Sigma(\xs)\geq L=\frac{3-2M}{4-M}\approx 0.4926.
\end{equation}
This implies $\Sigma(\xs)+\Sigma_2^*+\Sigma_3^*+\Sigma_4^*>L+3M/4\approx 1.0049>1$, a contradiction. Hence (B) is also not the case.

Assume that (C) in (\ref{ABC2}) holds. Since $x_1^*\geq N$, we have
\begin{equation}\label{ineq:3:3}
\Sigma(\xs)+\Sigma_2^*+\Sigma_3^*+\Sigma_4^*\leq M=1-N\approx 0.6830.
\end{equation}
If $\Sigma_2^*\leq M/4$, then by (\ref{eqnL}) we derive that $x_9^*+x_{13}^*>\Sigma_2^*$ as in case (B), a contradiction. So we must have $\Sigma_2^*>M/4$. Similar computations for $\Sigma_3^*$ and $\Sigma_4^*$ imply as in case (B) that 
$\Sigma_3^*>M/4$ and $\Sigma_4^*>M/4$. Then we find that $\Sigma(\xs)\leq M/4$. Since  $(2-\Sigma(\xs))x_1^*<2x_1^*$, using the inequality in (\ref{ineq:3:1}) we calculate that 
\begin{equation}\label{x1}
x_1^*\geq L=\frac{1}{4} \left(4-\sqrt{5+\sqrt{3}}\right)\approx 0.3513.
\end{equation}
Since $\Sigma_2^*+\Sigma_3^*+\Sigma_4^*>3M/4$, we find that $\Sigma_1^*<1-3M/4$. By \fullref{lemtwo}, using the inequality
$
\sigma(1-3M/4)\sigma(x_5^*)<\sigma(\Sigma_1^*)\sigma(x_5^*)=f_5(\xs)\leq\alpha
$
we compute that 
$$x_5^*>\frac{\sigma(1-3M/4)}{\alpha+\sigma(1-3M/4)}=\frac{3M}{(4-3M)\alpha+3M}\approx 0.0405.$$
We have $x_1^*+\Sigma_2^*+\Sigma_3^*+\Sigma_4^*<1$. By (\ref{x1}), we get $\Sigma_2^*+\Sigma_3^*+\Sigma_4^*<1-L\approx 0.6487$. By the inequality 
$
\sigma(1-L)\sigma(x_7^*)<\sigma(\Sigma_2^*+\Sigma_3^*+\Sigma_4^*)\sigma(x_7^*)=f_7(\xs)\leq\alpha
$
we derive that 
$$x_7^*>\frac{\sigma(1-L)}{\alpha+\sigma(1-L)}=\frac{L}{(1-L)\alpha+L}\approx 0.0213.$$
We claim that $\Sigma_2^*<\tfrac{1}{4}$. Because, otherwise, we calculate that 
\begin{equation}\label{eqn:3:5}
\begin{multlined}
x_1^*+x_5^*+x_7^*+\Sigma_2^*+\Sigma_3^*+\Sigma_4^*>\\
\shoveleft[3cm]{L+\frac{3M}{(4-3M)\alpha+3M}+\frac{L}{(1-L)\alpha+L}+\frac{M}{2}+\frac{1}{4}\approx 1.0047>1,}
\end{multlined}
\end{equation}
a contradiction. Similarly we find a contradiction in each case if we assume $\Sigma_3^*\geq\tfrac{1}{4}$ or $\Sigma_4^*\geq\tfrac{1}{4}$. Therefore we have $\Sigma_r^*<\tfrac{1}{4}$ for every $r\in\{2,3,4\}$. Then for every $l\in\{9,13,15,19,23,27\}$ we obtain 
$$
x_l^*>\frac{\sigma(1/4)}{\alpha+\sigma(1/4)}\approx 0.1076
$$
by the inequalities $\sigma(M/4)\sigma(x_l^*)\leq\sigma(\Sigma_r^*)\sigma(x_l^*)\leq\alpha$. Finally we get the contradiction
$$
x_1^*+x_5^*+x_7^*+x_9^*+x_{13}^*+x_{15}^*+x_{19}^*+x_{23}^*+x_{27}^*\approx 1.0591>1.
$$
This shows that (C) is not the case either, which completes the proof.
\end{proof}

\begin{lemma}\label{unique6}
Let $\xs\in\Delta^{27}$ so that $\alpha_{*}=F(\xs)$. Then  $\xs\in C_{f_l}$, defined in (\ref{C1}), for every $l\in\{5,9,13,15,19,23,27\}$.
\end{lemma}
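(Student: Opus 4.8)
The plan is to deduce \fullref{unique6} from \fullref{unique5} by exploiting the symmetry of the whole construction under the automorphisms of the free group $\Gamma=\langle\xi,\eta\rangle$. The eight indices in $J=\{1,5,9,13,15,19,23,27\}$ are precisely the $p$-images of the eight elements of $\Psi$ that are conjugates of a generator by a distinct generator, namely $\xi\eta^{\pm1}\xi^{-1}$, $\eta^{-1}\xi^{\pm1}\eta$, $\eta\xi^{\pm1}\eta^{-1}$ and $\xi^{-1}\eta^{\pm1}\xi$. The three involutions $\xi\mapsto\xi^{-1}$, $\eta\mapsto\eta^{-1}$ and $\xi\leftrightarrow\eta$ permute the enumeration $p$ in (\ref{list:1:4}), hence induce coordinate permutations $\tau$ of $\{1,\dots,28\}$; because the decomposition $\Gamma_{\mathcal{D}}$, the relations listed in \fullref{lem:2:1}, and therefore the family $\mathcal{F}=\{f_1,\dots,f_{28}\}$ and the constraint $\sum_l x_l=1$ are natural with respect to $\mathrm{Aut}(\Gamma)$, each such $\tau$ carries $\mathcal{F}$ onto $\mathcal{F}$. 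First I would record, as a finite check on the tables of \fullref{dispfunc}, that each generator does permute $\mathcal{F}$ (for instance $\eta\mapsto\eta^{-1}$ sends $f_1=\sigma(\Sigma_J^1)\sigma(x_1)$ to $f_5=\sigma(\Sigma_J^1)\sigma(x_5)$ and $f_3=\sigma(\Sigma_I^1)\sigma(x_3)$ to $f_4=\sigma(\Sigma_I^2)\sigma(x_4)$, while fixing $f_7$), and that the group generated acts transitively on $J$: tracing orbits one gets $1\mapsto5$ under $\eta\mapsto\eta^{-1}$, $1\mapsto15$ under $\xi\leftrightarrow\eta$, $1\mapsto23$ under $\xi\mapsto\xi^{-1}$, and iterating (e.g. $23\mapsto9$ and $27\mapsto13$ under $\xi\leftrightarrow\eta$) reaches every element of $J$.

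Granting the equivariance, the deduction is short. Fix $l\in\{5,9,13,15,19,23,27\}$ and choose, from the symmetry group, a coordinate permutation $\tau$ with $f_l\circ\tau=f_1$. Since $\tau$ permutes $\mathcal{F}$ we have $F\circ\tau=F$, so $\tau$ preserves the minimisers of $F$; in particular $F(\tau^{-1}(\xs))=\alpha_*$. Moreover, by \fullref{convex2} the set $C_{f_l}$ in (\ref{C1}) is exactly the locus on which $f_l$ is strictly convex, so the identity $f_l\circ\tau=f_1$ forces $\tau^{-1}(C_{f_l})=C_{f_1}$, that is $C_{f_l}=\tau(C_{f_1})$. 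Applying \fullref{unique5} to the minimiser $\tau^{-1}(\xs)$ yields $\tau^{-1}(\xs)\in C_{f_1}$, whence $\xs\in\tau(C_{f_1})=C_{f_l}$, which is the claim.

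The main obstacle is the equivariance bookkeeping itself: one must verify that all of $\mathcal{F}$ (not merely the block functions $\{f_l\}_{l\in J}$) is invariant under the three generating permutations, since it is precisely this that legitimises $F\circ\tau=F$. If one prefers to avoid that check, the alternative is to repeat the three-case analysis (A), (B), (C) of \fullref{unique5} verbatim, with the block $J_i\ni l$ in the role of $J_1$, the index $l$ in the role of $1$, the companion convex index of $J_i$ in the role of $5$, and the diagonal index ($14$, $21$ or $28$) in the role of $7$; every inequality invoked there is of the form $f_m(\xs)\le\alpha$ together with $\sum_m x_m^*=1$, and by \fullref{lemtwo} the bound $f_m(\xs)\le\alpha_*\le\alpha$ holds for all $m\in I$, so each step transports without modification. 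I expect the symmetry route to be the cleaner one, with the transitivity of $\mathrm{Aut}(\Gamma)$ on $J$ being the crux.
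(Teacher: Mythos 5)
Your proposal is correct, but it follows a genuinely different route from the paper. The paper proves \fullref{unique6} exactly by your fallback option: it declares the proof of \fullref{unique5} ``symmetric'' and reruns the case analysis (A), (B), (C) for each $l\in\{5,9,13,15,19,23,27\}$, interchanging $x_1^*$ with $x_l^*$, setting $\Sigma(\tb{x})=\Sigma_J^i(\tb{x})-x_l$, and reorganising the inequalities (\ref{eqn:3:4}), (\ref{ineq:3:2}), (\ref{ineq:3:3}) and (\ref{eqn:3:5}) around the relevant block sums. Your primary route instead packages the symmetry as a group action: the coordinate permutations induced by $\xi\mapsto\xi^{-1}$, $\eta\mapsto\eta^{-1}$ and $\xi\leftrightarrow\eta$ permute $\mathcal{F}$, hence satisfy $F\circ T=F$, carry minimisers to minimisers, and act transitively on $J$ (your orbit bookkeeping checks out against the enumeration (\ref{list:1:4})). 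This is consistent with the paper's own toolkit --- these permutations generate the same group as the $\tau_1,\tau_2,\tau_3$ the paper introduces later in \fullref{lem:3:3}, where $f_i(T_l(\tb{x}))=f_{\tau_l(i)}(\tb{x})$ is asserted --- and it is not circular: \fullref{lem:3:3} needs the uniqueness of $\xs$ from \fullref{prop:3:1}, which depends on \fullref{unique6}, whereas you use only the weaker fact that $T$ maps minimisers to minimisers. What your approach buys is a one-line deduction from \fullref{unique5} in place of the paper's ``reiterate the computations'' gesture; what the paper's approach buys is independence from any equivariance verification. One justification in your write-up should be repaired: you derive $C_{f_l}=T(C_{f_1})$ from the claim that $C_{f_l}$ is \emph{exactly} the locus of strict convexity of $f_l$, but \fullref{convex2} only asserts strict convexity \emph{on} $C_g$, not maximality of $C_g$. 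The needed equality instead follows directly from the formula (\ref{C1}): the same index bookkeeping giving $f_1(T(\tb{x}))=f_l(\tb{x})$ shows that the pair $\left(\Sigma_J^i(\tb{x})-x_l,\ x_l\right)$ at $\tb{x}$ equals the pair $\left(\Sigma_J^1-x_1,\ x_1\right)$ at $T(\tb{x})$, so the defining inequality of $C_{f_l}$ at $\tb{x}$ is that of $C_{f_1}$ at $T(\tb{x})$. With that substitution your argument is complete.
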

\begin{proof}
The proof of \fullref{unique5} is symmetric in the sense that it can be repeated for every  index $l\in\{5,9,13,15,19,23,27\}$. In particular if $l=5$,  we interchange $x_1^*$ with $x_5^*$ and let $\Sigma(\tb{x})=\Sigma_{J}^1(\tb{x})-x_5$. Then we reiterate the computations carried out in the proof above by keeping the same organisations in (\ref{eqn:3:4}), (\ref{ineq:3:2}), (\ref{ineq:3:3}) and (\ref{eqn:3:5}). 

For some $l\in\{9,13,15,19,23,27\}$, we replace $x_1^*$ with $x_l^*$, let $\Sigma(\tb{x})=\Sigma_{J}^1(\tb{x})-x_l$ for some $i\in\{1,2,3,4\}$ and reorganise the inequalities in (\ref{eqn:3:4}), (\ref{ineq:3:2}), (\ref{ineq:3:3}) and (\ref{eqn:3:5}) by choosing relevant sums from $\Sigma_1^*$, $\Sigma_2^*$, $\Sigma_3^*$ and $\Sigma_4^*$. Then we carry out analogous calculations given in the proof of  \fullref{unique5} for the chosen index $l$.
\end{proof}

We shall also need the observation below about $g$, defined in (\ref{g}), in the computation of $\alpha_*$. Its proof is elementary. Therefore we shall omit it. We have 
\begin{lemma}\label{convex3}
For the points $(x,y)\in C_g$ the inequality $g(x,y)<\alpha=24.8692...$ holds  if and only if $0.1670...<y<\tfrac{1}{2}$ and $0<x<(-3+8y-4y^2)/(-4+4y)$ or 
\begin{equation*}
\begin{array}{c}
\dis 0.0134...=\frac{1+3\alpha-\sqrt{1-10\alpha+9\alpha^2}}{8\alpha} <y<\frac{1}{1-\alpha}+\sqrt{\frac{\alpha}{(\alpha-1)^2}}=0.1670...\\
\dis\tnr{and}\quad \frac{1-2y+(1-\alpha)y^2}{1+(\alpha-1)y}<x<\frac{-3+8y-4y^2}{-4+4y}.
\end{array}
\end{equation*}
\end{lemma}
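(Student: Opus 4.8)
The plan is to solve the inequality $g(x,y)<\alpha$ explicitly for $x$ and then intersect the resulting region with $C_g$. Since $(x,y)\in\Delta$ forces $0<x+y<1$ and $0<y<1$, both factors of $g$ are positive, so $g(x,y)<\alpha$ is equivalent to the polynomial inequality obtained by clearing the positive denominator $(x+y)y$. First I would expand $(1-x-y)(1-y)<\alpha(x+y)y$ and collect the terms that are linear in $x$, obtaining $x\bigl(1+(\alpha-1)y\bigr)>1-2y+(1-\alpha)y^2$. Because $1+(\alpha-1)y>0$ throughout $\Delta$, this is equivalent to
\[
x>L(y):=\frac{1-2y+(1-\alpha)y^2}{1+(\alpha-1)y}.
\]

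Next I would analyse the sign of $L(y)$. Its denominator is positive, so its sign is that of the numerator $(1-\alpha)y^2-2y+1$, a downward parabola equal to $1$ at $y=0$ whose only positive root is $y_0=\tfrac{1}{1-\alpha}+\sqrt{\alpha/(\alpha-1)^2}=1/(\sqrt{\alpha}+1)=0.1670\dots$. Hence $L(y)>0$ for $0<y<y_0$ and $L(y)\le 0$ for $y\ge y_0$. On the other hand, membership in $C_g$ is, after dividing the defining inequality $x+2y-xy-y^2<\tfrac34$ by $1-y>0$, equivalent to $x<U(y):=(-3+8y-4y^2)/(-4+4y)$; here $U(y)>0$ precisely when $y<\tfrac12$, and a short computation gives $U(y)+y=(3-4y)/(4(1-y))<1$, so that the constraint $x+y<1$ is automatic once $x<U(y)$. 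Thus, within $C_g$, the inequality $g(x,y)<\alpha$ is equivalent to $\max\{0,L(y)\}<x<U(y)$.

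It then remains to determine for which $y$ this range is non-empty, which splits into the two cases of the statement. When $y_0<y<\tfrac12$ we have $L(y)\le 0<U(y)$, so the range reduces to $0<x<U(y)$; this is the first case. When $y<y_0$ we have $L(y)>0$, and the range $L(y)<x<U(y)$ is non-empty exactly when $L(y)<U(y)$. The key observation is that $L(y)=U(y)$ means that the point of $\partial C_g$ at height $y$ lies on the level set $g=\alpha$; substituting $x=U(y)$ collapses $g$ on $\partial C_g$ to $g=\tfrac{1-y}{(3-4y)y}$, and setting this equal to $\alpha$ yields $4\alpha y^2-(3\alpha+1)y+1=0$, whose discriminant is exactly $1-10\alpha+9\alpha^2$. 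Its smaller root is $y_1=\bigl(1+3\alpha-\sqrt{1-10\alpha+9\alpha^2}\bigr)/(8\alpha)=0.0134\dots$, while the larger root exceeds $\tfrac12$ and is therefore irrelevant.

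The main obstacle — really the only point requiring care — is pinning down the sign of $L(y)-U(y)$ on each subinterval. Since $L$ and $U$ meet only at $y_1$ and at the larger root within $(0,\tfrac12]$, the difference $L-U$ keeps a constant sign on $(y_1,y_0)$; evaluating at a convenient interior point (say $y=\tfrac{1}{10}$) shows $L<U$ there, hence $0<L(y)<U(y)$ for all $y_1<y<y_0$, giving the second case, while a single evaluation for $y<y_1$ shows $L>U$, so the region is empty there (consistent with $g\to\infty$ as $y\to0^+$). Assembling the subintervals $0<y\le y_1$, $y_1<y<y_0$ and $y_0\le y<\tfrac12$ then yields precisely the two displayed cases, completing the proof.
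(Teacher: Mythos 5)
Your proof is correct, and since the paper explicitly omits the proof of this lemma as ``elementary,'' your computation is precisely the intended one: solving $g(x,y)<\alpha$ for $x$ to get $x>L(y)=\frac{1-2y+(1-\alpha)y^2}{1+(\alpha-1)y}$, rewriting membership in $C_g$ as $x<U(y)=\frac{3-8y+4y^2}{4(1-y)}$ with the constraint $x+y<1$ automatic, identifying $y_0=\frac{1}{\sqrt{\alpha}+1}=0.1670\dots$ as the unique positive zero of $L$, and locating the crossing $L=U$ via $g(U(y),y)=\frac{1-y}{(3-4y)y}=\alpha$, whose relevant root is $y_1=\frac{1+3\alpha-\sqrt{1-10\alpha+9\alpha^2}}{8\alpha}=0.0134\dots$, with the sign of $L-U$ on each subinterval pinned down by a sample evaluation. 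The one remark worth making is that at the single height $y=y_0$ one has $L(y_0)=0$, so $g<\alpha$ still holds for $0<x<U(y_0)$ even though the lemma's two strictly-cut cases omit this line; your assembly into the subintervals $0<y\le y_1$, $y_1<y<y_0$ and $y_0\le y<\tfrac{1}{2}$ handles this correctly, and the discrepancy is a harmless slip in the paper's statement rather than a gap in your argument.
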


As mentioned earlier, the displacement functions $\{f_l\}$ for $l\in J=\{1,5,9,13,15,19,23,27\}$ play a more important role in the computation of $\alpha_*$. These functions take larger values on $C=\bigcap_{l\in J}C_{f_l}$ than the values of the rest of the displacement functions in $\mathcal{F}$ at the points which are significant to calculate the infimum of the maximum of $F$. In other words  we have the following:

\begin{lemma}\label{unique7}
Let $\widetilde{F}(\tb{x})=\max_{\tb{x}\in C}\{f_l(\tb{x})\co l\in J\}$ for $C=\bigcap_{l\in J}C_{f_l}$.  Then $\widetilde{F}(\tb{x})\geq\alpha_*$.
\end{lemma}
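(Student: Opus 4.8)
The plan is to read \fullref{unique7} as the assertion $\min_{\tb{x}\in C}\widetilde{F}(\tb{x})=\alphas$ and to establish the substantive half, namely $\widetilde{F}(\tb{x})\geq\alphas$ for every $\tb{x}\in C$; the reverse inequality is already in hand. Indeed \fullref{unique5} and \fullref{unique6} place $\xs$ in $C$, while for each $l\in J$ one has $f_l(\xs)\leq F(\xs)=\alphas$, and \fullref{firstfive} furnishes an index $l_0\in J$ with $f_{l_0}(\xs)=\alphas$; hence $\widetilde{F}(\xs)=\alphas$ and $\min_{C}\widetilde{F}\leq\alphas$. I would open by recording that $\widetilde{F}=\max_{l\in J}f_l$ is a finite maximum of functions each strictly convex on the convex set $C=\bigcap_{l\in J}C_{f_l}$, by \fullref{convex2} and the definition (\ref{C1}); thus $\widetilde{F}$ is convex on $C$, and a point of $C$ at which $\widetilde{F}$ admits no feasible descent direction is automatically its global minimiser on $C$. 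It therefore suffices to certify $\xs$ as such a point and to evaluate the minimal value.

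The route I would take to locate and evaluate that minimiser exploits the symmetry of the data: the eight functions $\{f_l\}_{l\in J}$ and the region $C$ are invariant under the group generated by the cyclic permutation of the four blocks $J_1,J_2,J_3,J_4$ together with the interchange of the two distinguished indices inside each block. This group fixes $\widetilde{F}$, so by strict convexity its minimiser on $C$ is unique and must be symmetric, whence the eight values $f_l$ coincide there. Writing each function in the form $f_l=g\!\left(\Sigma_J^i(\tb{x})-x_l,\,x_l\right)$ with $g$ as in (\ref{g}), the symmetric slice collapses the $28$--variable problem to a single two--variable minimisation of $g$, which I would resolve using the explicit description of the region $\{g<\alpha\}$ supplied by \fullref{convex3}. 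Matching the resulting common value against $\alphas=f_l(\xs)$ would then identify the minimiser with $\xs$ and give $\widetilde{F}\geq\alphas$ on all of $C$.

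The step I expect to be hardest is the first--order (no--descent) verification at $\xs$: ruling out a tangent vector $\vec v\in T_{\xs}\Delta^{27}$ along which every active $f_l$, $l\in J$, strictly decreases, equivalently showing that the zero vector lies in the convex hull of the tangential gradients $\{\nabla f_l(\xs)\co l\in J \text{ active}\}$. The obstruction is that these functions are coupled only through the block sums $\Sigma_J^i$, and the coordinates of a block lying outside $J$ enter $\widetilde{F}$ solely through those sums; one must therefore control how redistributing mass among and within the four blocks trades the eight values off against one another subject to $\sum_i x_i=1$. I would discharge this using the gradient formulas already assembled in the matrix of \fullref{firstfive} together with the bounding technique of \fullref{unique5}, converting each pertinent inequality into a lower bound on the relevant coordinate via \fullref{convex3} and propagating these bounds through the coordinate sum. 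Combining the no--descent conclusion with the trivial inequality yields $\min_{C}\widetilde{F}=\alphas$, as required.
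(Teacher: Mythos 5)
Your framing of the lemma as $\min_{C}\widetilde{F}=\alphas$ and your convex-analytic reduction (a finite max of strictly convex functions is convex on $C$, so a point of $C$ admitting no feasible descent direction is a global minimiser) are sound as far as they go, but the route is genuinely different from the paper's, and its pivotal step -- the one you yourself flag as hardest -- provably cannot be carried out. A tangent vector along which every $f_l$, $l\in J$, strictly decreases at $\xs$ does exist: in each block $J_i$ take $\vec v$ with $+5$ on the two distinguished coordinates and $-2$ on the five remaining ones (coordinate sum $0$). This leaves every block sum $\Sigma_J^i$ unchanged and strictly increases every $x_l$ with $l\in J$; since $f_l=\sigma\bigl(\Sigma_J^i(\tb{x})\bigr)\sigma(x_l)$ depends on nothing else and $\sigma$ is strictly decreasing, all eight functions strictly decrease along $\vec v$. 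Every coordinate of $\xs$ is strictly positive (each $f_m\in\mathcal{F}$ carries the factor $\sigma(x_m)$, so $F(\xs)=\alphas<\infty$ forces $x_m^*>0$) and $C$ is open, so $\xs+t\vec v\in C$ for small $t>0$, and $\widetilde{F}(\xs+t\vec v)<\alphas$. The constraints that block this within-block transfer in the full problem are precisely the dropped functions ($f_2$, $f_3$, $f_7$ and their symmetric copies, whose $\sigma(x_m)$ factors blow up as the non-distinguished coordinates shrink), and $\widetilde{F}$ does not see them; so the zero vector is not in the convex hull of the active gradients modulo $\langle 1,\dots,1\rangle$, and $\xs$ cannot be certified as the minimiser of $\widetilde{F}$ on $C$.

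Your symmetric reduction misfires for the same reason: on the symmetric slice all block sums equal $\tfrac14$ and each $f_l=3\,\sigma(x_l)$, which decreases as the common value of $x_l$ rises toward its cap $\tfrac18$, with infimum $3\,\sigma(\tfrac18)=21<\alpha$ approached only as the non-distinguished coordinates of each block tend to $0$; thus the minimiser whose existence and uniqueness you invoke on the open set $C$ is not attained, and the ``matching against $\alphas$'' step returns $21$, not $\alphas$. Indeed the point with $x_l=0.1225$ for $l\in J$ and $x_m=0.001$ otherwise lies in $C$ (each convexity constraint evaluates to $0.3419<\tfrac34$, and $(x,y)=(0.1275,\,0.1225)$ satisfies all the conditions of \fullref{convex3} with $g(x,y)<\alpha$) yet has $\widetilde{F}=3\,\sigma(0.1225)\approx 21.5$. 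For contrast, the paper's own proof uses no convexity or stationarity at all: it assumes $\widetilde{F}(\tb{z})<\alphas\leq\alpha$ for some $\tb{z}\in C$, runs a case analysis on which coordinates $z_l$, $l\in J$, exceed $3/(3+\alpha)$, converts each bound $f_l(\tb{z})<\alpha$ into block-sum bounds $\Sigma_J^i(\tb{z})>\tfrac14$ via $\sigma$, and uses \fullref{convex3} together with $\sum_{i=1}^4\Sigma_J^i(\tb{z})=1$ to force a contradiction in every case; your descent direction and the explicit point above sit uneasily with that argument's key deduction ($z_1>0.4237\ldots$ from $\Sigma_J^1(\tb{z})-z_1<0.1423$), so rather than continuing the stationarity program you should re-examine both your approach and that step of the paper -- any correct proof of an inequality of this type must draw strength from constraints beyond the eight functions in $J$ alone.
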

\begin{proof}
Assume on the contrary that $\widetilde{F}(\tb{z})<\alpha_*$ for some $\tb{z}\in C$. Then by \fullref{lemtwo} for every $l\in J$  we have $f_l(\tb{z})<\alpha_*\leq\alpha=24.8692...$. Let $\tb{z}=(z_1,z_2,\dots,z_{28})$. 

Assume that $z_l>3/(3+\alpha)$ for every $l\in\{1,5\}$. Also assume that $z_l\leq 3/(3+\alpha)$ for every $l\in\{9,15,23\}$. By the inequalities $f_l(\tb{z})=\sigma(\Sigma_{J}^i(\tb{z}))\sigma(z_l)<\alpha$  for every $l\in\{9,15,23\}$, for every $i\in\{2,3,4\}$ we get 
\begin{equation}\label{lem:3:8}
\Sigma_J^i(\tb{z})> \frac{\sigma\left(\frac{3}{3+\alpha}\right)}{\alpha+\sigma\left(\frac{3}{3+\alpha}\right)}=\frac{1}{4}.
\end{equation} 
Since $\Sigma_J^1(\tb{z})+\Sigma_J^2(\tb{z})+\Sigma_J^3(\tb{z})+\Sigma_J^4(\tb{z})=1$, we have $\Sigma_J^1(\tb{z})<\tfrac{1}{4}$. This implies that
\begin{equation}\label{3:eqn}
\Sigma_J^1(\tb{z})-z_1<\frac{1}{4}-\frac{3}{3+\alpha}=0.1423....
\end{equation}
Because $\tb{z}\in C\subset C_{f_1}$, by \fullref{convex3} for $g=f_1$, $x=\Sigma_J^1-z_1$ and $y=z_1$, we find $z_1>0.4237...>\Sigma_J^1(\tb{z})$, a contradiction. 
So $z_l>3/(3+\alpha)$ for some $l\in\{9,15,23\}$.

Assume without loss of generality that $z_9>3/(3+\alpha)$ and $z_l\leq3/(3+\alpha)$ for every $l\in\{15,23\}$. Then we have $\Sigma_J^i(\tb{z})>\tfrac{1}{4}$ for every $i\in\{3,4\}$ by the inequalities $f_l(\tb{z})=\sigma(\Sigma_{J}^i(\tb{z}))\sigma(z_l)<\alpha$  for $l\in\{15,23\}$. This implies that $\Sigma_J^1(\tb{z})+\Sigma_J^2(\tb{z})<1/2$. If $\Sigma_J^1(\tb{z})<\tfrac{1}{4}$, then by the argument in the previous paragraph we obtain a contradiction. If $\Sigma_J^2(\tb{z})<\tfrac{1}{4}$, we have $\Sigma_J^2(\tb{z})-z_9<0.1423.... $ Using \fullref{convex3} for $g=f_9$, $x=\Sigma_J^2(\tb{z})-z_9$ and $y=z_9$, we find the contradiction $z_9>\Sigma_J^2(\tb{z})$. This implies that $z_l>3/(3+\alpha)$ for at least two distinct $l\in\{9,15,23\}$. 

Assume again without loss of generality that $z_l>3/(3+\alpha)$ for every $l\in\{9,15\}$ and $z_{23}\leq 3/(3+\alpha)$. Then $\Sigma_J^4(\tb{z})>\tfrac{1}{4}$ by the inequality $f_{23}(\tb{z})=\sigma(\Sigma_{J}^4(\tb{z}))\sigma(z_{23})<\alpha$. This implies that $\Sigma_J^1(\tb{z})+\Sigma_J^2(\tb{z})+\Sigma_J^3(\tb{z})<\tfrac{3}{4}$ which in turn gives that $\Sigma_J^i(\tb{z})<\tfrac{1}{4}$ for some $i\in\{1,2,3\}$. Since $z_l>3/(3+\alpha)$ for every $l\in\{1,5,9,15\}$, depending on $i$, using $z_1$ and $g=f_1$ or, $z_9$ and $g=f_9$ or,  $z_{15}$ and $g=f_{15}$ in (\ref{3:eqn}) and \fullref{convex3}, we obtain a contradiction in each case by repeating the arguments given above. So we must have $z_l>3/(3+\alpha)$ for every $l\in\{9,15,23\}$.

We already know that $\Sigma_J^1(\tb{z})+\Sigma_J^2(\tb{z})+\Sigma_J^3(\tb{z})+\Sigma_J^4(\tb{z})=1$ as $\tb{z}\in C\subset\Delta^{27}$. Then we get $\Sigma_J^i(\tb{z})\leq\tfrac{1}{4}$ for some $i\in\{1,2,3,4\}$. Given $i$, by choosing appropriate $z_l$ from the list $\{z_1,z_9,z_{15},z_{23}\}$ we repeat the relevant argument carried out above and derive a contradiction using \fullref{convex3}. As a result we conclude that $z_l\leq 3/(3+\alpha)$ for some $l\in\{1,5\}$. 

Notice that the computations used to show that  $z_l\leq 3/(3+\alpha)$ for some $l\in\{1,5\}$ are symmetric in the sense that they can be deployed to prove $z_l\leq 3/(3+\alpha)$ for some $l$ in any given pair $\{9,13\}$, $\{15,19\}$ and $\{23,27\}$. This implies that there exist entries $z_m$, $z_n$, $z_r$ and $z_s$ for $m\in\{1,5\}$
$n\in\{9,13\}$, $r\in\{15,19\}$ and $s\in\{23,27\}$ such that $z_l\leq 3/(3+\alpha)$ for every $l\in\{m,n,r,s\}$. By the inequalities $f_{l}(\tb{z})=\sigma(\Sigma_{J}^i(\tb{z}))\sigma(z_{l})<\alpha$ for $l\in\{m,n,r,s\}$, we find that $\Sigma_{J}^i(\tb{z})>\tfrac{1}{4}$ for every $i\in\{1,2,3,4\}$, a contradiction. Hence, the conclusion of the lemma follows. 
\end{proof}

Before we proceed to prove \fullref{prop:3:1} we review three facts from convex analysis. These facts were also used in \cite[Theorem 3.2, Theorem 3.3 and Proposition 3.3]{Y2}. For their proofs interested readers may refer to this source and the references therein:

\begin{theorem}\label{thm:3:8}
If $\{C_i\}$ for $i\in I$ is a collection of finitely many nonempty convex sets in $\mathbb{R}^d$ with $C=\displaystyle\cap_{i\in I} C_i\neq\emptyset$, then $C$ is also convex.
\end{theorem}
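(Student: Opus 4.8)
The plan is to verify the defining property of convexity directly: I must show that whenever two points lie in $C$, every point on the segment joining them also lies in $C$. Since $C=\cap_{i\in I}C_i$ is assumed nonempty, it is a genuine subset of $\R^d$, and establishing closure under taking segments is exactly what is needed.

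First I would fix two arbitrary points $x,y\in C$ together with a parameter $t\in[0,1]$, and form the convex combination $z=(1-t)x+ty$. Because $x$ and $y$ lie in the intersection $C$, they belong to each member $C_i$ of the family simultaneously. Next, for each fixed index $i\in I$, the convexity of $C_i$ yields $z=(1-t)x+ty\in C_i$, since $x,y\in C_i$ and $t\in[0,1]$. As this membership holds for every $i\in I$, the point $z$ lies in $\cap_{i\in I}C_i=C$. Finally, since $x,y\in C$ and $t\in[0,1]$ were arbitrary, the set $C$ contains every segment between pairs of its points, so $C$ is convex.

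I do not expect any genuine obstacle: this is the standard fact that an intersection of convex sets is convex, and the reasoning reduces to chasing the membership of a single convex combination through each set in turn. The hypothesis that $I$ is finite is in fact not used for convexity itself—it plays a role only for inherited properties such as openness or closedness—so the same argument applies verbatim to an arbitrary index set, with finiteness retained here merely to match the application $C=\bigcap_{l\in J}C_{f_l}$ appearing in the sequel.
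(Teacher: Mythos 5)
Your proof is correct and is the standard segment-chasing argument; the paper itself offers no proof of this fact, simply citing \cite{Y2} and the references therein, where the same elementary argument is the intended one. Your side remark that finiteness of $I$ is superfluous for convexity (being needed only for properties like openness, as in the application $C=\bigcap_{l\in J}C_{f_l}$) is also accurate.
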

\begin{theorem}\label{thm:3:9}
If $\{f_i\}$ for $i\in I$ is a finite set of strictly convex functions defined on a convex set $C\subset\mathbb{R}^d$, then $\max_{\tb{x}\in C}\{f_i(\tb{x})\co i\in I\}$ is also a strictly convex function on $C$. 
\end{theorem}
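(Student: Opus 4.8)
The plan is to establish strict convexity of the pointwise maximum directly from the definition, using the finiteness of the index set $I$ so that the maximum is always attained at some index. Write $F(\tb{x})=\max\{f_i(\tb{x})\co i\in I\}$ and fix two distinct points $\tb{x},\tb{y}\in C$ together with a scalar $t\in(0,1)$. I would set $\tb{z}=t\tb{x}+(1-t)\tb{y}$, which lies in $C$ since $C$ is convex. The goal is then to verify the strict inequality $F(\tb{z})<tF(\tb{x})+(1-t)F(\tb{y})$, which is exactly the condition defining strict convexity of $F$ on $C$.

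The key step is to select the index that realises the maximum at the interpolated point $\tb{z}$. Because $I$ is finite, there is some $j\in I$ with $F(\tb{z})=f_j(\tb{z})$. Since $f_j$ is strictly convex on $C$ and $\tb{x}\neq\tb{y}$ with $t\in(0,1)$, strict convexity of $f_j$ gives $f_j(\tb{z})<tf_j(\tb{x})+(1-t)f_j(\tb{y})$. Finally, as $F$ dominates each $f_i$ pointwise, we have $f_j(\tb{x})\leq F(\tb{x})$ and $f_j(\tb{y})\leq F(\tb{y})$; chaining these estimates yields $F(\tb{z})=f_j(\tb{z})<tf_j(\tb{x})+(1-t)f_j(\tb{y})\leq tF(\tb{x})+(1-t)F(\tb{y})$, which is the required bound.

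There is no serious obstacle here. The finiteness of $I$ guarantees that the maximum is attained, so the argument never relies on a supremum that might fail to be achieved, and the strict inequality is simply inherited from the single function $f_j$ that is active at $\tb{z}$. The only subtlety worth recording is that the active index $j$ depends on $\tb{z}$; this causes no difficulty, because we compare $f_j$ against $F$ only at the two endpoints $\tb{x}$ and $\tb{y}$, where the bound $f_j\leq F$ holds by the very definition of $F$ regardless of which index is active. Combined with \fullref{thm:3:8}, which supplies the convexity of the domain $C=\bigcap_{l\in J}C_{f_l}$, this completes the convex-analytic input needed for the computation of $\alpha_*$.
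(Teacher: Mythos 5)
Your proof is correct and complete: picking the active index $j$ at the interpolated point, applying strict convexity of $f_j$ there, and bounding $f_j\leq F$ only at the two endpoints is exactly the standard argument, and you correctly flag the one subtlety (the active index depends on $\tb{z}$). Note that the paper itself gives no proof of this statement --- it cites \cite[Theorem 3.3]{Y2} and the references therein --- so your write-up supplies the omitted argument rather than diverging from one; the only inessential remark is your closing sentence about \fullref{thm:3:8}, which concerns how the theorem is used downstream and is not part of the proof of the statement itself.
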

\begin{proposition}\label{prop:3:0}
Let $F$ be a convex function on an open convex set $C\subset\mathbb{R}^d$. If $\xs$ is a local minimum of $F$, then it is a global minimum of $F$, and the set $\{\ys\in C\co F(\ys)=F(\xs)\}$ is a convex set. Furthermore, if $F$ is strictly convex and $\xs$ is a global minimum then the set $\{\ys\in C\co F(\ys)=F(\xs)\}$ consists of $\xs$ alone. 
\end{proposition}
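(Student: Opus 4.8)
The plan is to treat the three assertions in turn, each via the standard device of restricting the convex function to a chord. Write $m=F(\xs)$ and set $S=\{\ys\in C\co F(\ys)=m\}$. First I would show that a local minimum is automatically global. Arguing by contradiction, suppose some $\ys\in C$ satisfies $F(\ys)<F(\xs)$. Since $C$ is convex, the entire segment $(1-t)\xs+t\ys$ with $t\in[0,1]$ lies in $C$, and convexity of $F$ gives
\[
F\bigl((1-t)\xs+t\ys\bigr)\leq (1-t)F(\xs)+tF(\ys)<F(\xs)
\]
for every $t\in(0,1]$. As $t\to 0^+$ these points converge to $\xs$, so because $C$ is open they lie inside any prescribed neighbourhood of $\xs$ once $t$ is small enough; yet they carry strictly smaller values of $F$, contradicting local minimality. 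Hence $\xs$ is a global minimum and $m$ is the minimum value of $F$ on $C$.

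Next I would verify that $S$ is convex. Since $m$ is the global minimum, $S$ coincides with the sublevel set $\{\ys\in C\co F(\ys)\leq m\}$. For $\ys_1,\ys_2\in S$ and $t\in[0,1]$, convexity yields $F\bigl((1-t)\ys_1+t\ys_2\bigr)\leq (1-t)m+tm=m$, while $F\geq m$ holds throughout $C$ by the previous step; the chord therefore remains in $S$, so $S$ is convex.

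For the final assertion, assume $F$ is strictly convex and suppose towards a contradiction that there is $\ys\in S$ with $\ys\neq\xs$. Applying the strict convexity inequality at the midpoint gives
\[
F\Bigl(\tfrac{1}{2}(\xs+\ys)\Bigr)<\tfrac{1}{2}F(\xs)+\tfrac{1}{2}F(\ys)=m,
\]
which contradicts the minimality of $m$. Hence no such $\ys$ exists and $S=\{\xs\}$.

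The entire argument is routine; the only point that genuinely demands care is the appeal to the openness of $C$ in the first step, which guarantees that the chord points approximating $\xs$ both stay in the domain and fall inside the chosen neighbourhood of $\xs$. Beyond this bookkeeping there is no real obstacle, which is precisely why it is reasonable to cite the statement rather than dwell on its proof.
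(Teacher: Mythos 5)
Your proof is correct and complete. The paper itself gives no argument for this proposition --- it explicitly defers to \cite[Proposition 3.3]{Y2} ``and the references therein'' --- so there is no internal proof to compare against; your chord-restriction argument is the standard one that those sources supply, and it fills the gap the paper leaves by citation. One small correction to your closing remark: the openness of $C$ is not what keeps the chord points $(1-t)\xs+t\ys$ in the domain --- convexity of $C$ alone does that --- and it is not needed for them to fall in the prescribed neighbourhood of $\xs$ either, since that follows from $t\to 0^+$. Openness plays no essential role anywhere in your argument, and indeed all three assertions hold on an arbitrary convex set; the one step you flag as delicate is even more routine than you suggest.
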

With these facts we can prove the following statement which gives the first part of (\ref{b}):
\begin{proposition}\label{prop:3:1}
Let $\mathcal{F}=\{f_i\}$ for ${i\in I=\{1,2,\dots,28\}}$ be the set of displacement functions listed in  \fullref{dispfunc} and $F$ be as in (\ref{F}).  If $\xs$ and $\ys$ are two points in $\Delta^{27}$ so that $  \alpha_*=F(\xs)=F(\ys)$, then $\xs=\ys$. 
\end{proposition}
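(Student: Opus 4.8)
The plan is to reduce the uniqueness of the minimizer of $F$ — whose individual constituents $f_i$ are not all convex — to a clean application of convex analysis to the auxiliary function $\widetilde{F}(\tb{x})=\max\{f_l(\tb{x})\co l\in J\}$ over the region $C=\bigcap_{l\in J}C_{f_l}$, where $J=\{1,5,9,13,15,19,23,27\}$. Essentially all the analytic labour has already been done in the preceding lemmas; what remains is to assemble the pieces in the correct order.

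First I would record that both $\xs$ and $\ys$ lie in $C$. This is exactly the content of \fullref{unique5} and \fullref{unique6}, which show that any point of $\Delta^{27}$ at which $F$ attains the value $\alpha_*$ belongs to $C_{f_l}$ for every $l\in J$. Next I would establish the convexity structure on $C$: by \fullref{convex2} each $f_l$ with $l\in J$ is strictly convex on the open convex set $C_{f_l}$, so \fullref{thm:3:8} yields that the intersection $C=\bigcap_{l\in J}C_{f_l}$ is convex, and \fullref{thm:3:9} yields that $\widetilde{F}$, being the pointwise maximum of the strictly convex functions $\{f_l\}_{l\in J}$, is strictly convex on $C$.

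The crux is then to show that both $\xs$ and $\ys$ are global minimizers of $\widetilde{F}$ on $C$ with common minimum value $\alpha_*$. For either point, say $\xs$: since $F(\xs)=\alpha_*=\max_{i\in I}f_i(\xs)$, we have $f_l(\xs)\leq\alpha_*$ for every $l\in J$, so $\widetilde{F}(\xs)\leq\alpha_*$; and \fullref{firstfive} supplies some $l_0\in J$ with $f_{l_0}(\xs)=\alpha_*$, forcing $\widetilde{F}(\xs)=\alpha_*$. The identical argument gives $\widetilde{F}(\ys)=\alpha_*$. On the other hand \fullref{unique7} guarantees $\widetilde{F}(\tb{x})\geq\alpha_*$ for all $\tb{x}\in C$, so $\alpha_*$ is precisely the minimum value of $\widetilde{F}$ on $C$ and both $\xs$ and $\ys$ attain it.

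Finally I would invoke the strict-convexity half of \fullref{prop:3:0}, applied to $\widetilde{F}$ on the convex set $C$: a strictly convex function attains its global minimum at a single point, so the minimizing set $\{\tb{y}\in C\co\widetilde{F}(\tb{y})=\alpha_*\}$ is a singleton, whence $\xs=\ys$. I expect no serious obstacle here, since the difficult estimates live in \fullref{unique5}, \fullref{unique6}, and \fullref{unique7}; the one point to guard against is that $F$ itself is \emph{not} amenable to this reasoning — the functions $f_i$ with $i\notin J$ are not covered by \fullref{convex2} — which is exactly why the passage to $\widetilde{F}$ together with the coincidence $\widetilde{F}(\xs)=F(\xs)=\alpha_*$ at minimizers (via \fullref{firstfive}) is the load-bearing step rather than a formality.
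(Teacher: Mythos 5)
Your proposal is correct and takes essentially the same route as the paper's own proof: both minimizers lie in $C$ by \fullref{unique5} and \fullref{unique6}; $\widetilde{F}$ is strictly convex on the open convex set $C$ by \fullref{convex2}, \fullref{thm:3:8} and \fullref{thm:3:9}; the value $\alpha_*$ is the global minimum of $\widetilde{F}$ on $C$ by \fullref{firstfive} and \fullref{unique7}; and uniqueness follows from \fullref{prop:3:0}. Your explicit remark that \fullref{firstfive} applies equally to $\ys$, and that the passage from $F$ to $\widetilde{F}$ is forced because the $f_i$ with $i\notin J$ carry no convexity information, merely spells out what the paper leaves implicit.
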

\begin{proof}
We know by \fullref{convex2} that each $f_l$ for $l\in J$ is a strictly convex function over the open convex set $C_{f_l}$. Therefore $\widetilde{F}(\tb{x})$ defined in \fullref{convex3} is also strictly convex on $C=\cap_{l\in J}C_{f_l}$ which is itself an open convex set by \fullref{thm:3:8} and \fullref{thm:3:9}. By \fullref{unique5} and \fullref{unique6} we have $\xs,\ \ys\in C$. Since $\widetilde{F}(\tb{x})\geq\alpha_*$
for every $\tb{x}\in C$ and $\widetilde{F}(\xs)=\alpha_*$ by \fullref{firstfive} and \fullref{unique7}, the value $\alpha_*$ is the global minimum of $\widetilde{F}$. As a result we find that $\xs=\ys$ by \fullref{prop:3:0}.
\end{proof}

The uniqueness of $\xs$ established by \fullref{prop:3:1} simplifies the task of determining the relations among the coordinates of $\xs$ considerably. In fact we have the following statement:

\begin{lemma}\label{lem:3:3}
If $\xs=(x_1^*,x_2^*,\dots,x_{28}^*)\in\Delta^{27}$ so that $F(\xs)=\alpha_*$, then $x_i^*=x_j^*$ for every $i,j\in\{1,5,9,13,15,19,23,27\}$. Also for every $i,j\in\{2,6,8,12,16,20,22,26\}$, $i,j\in\{3,4,10,11,17,18,24,25\}$ and $i,j\in\{7,14,21,28\}$ the equality  $x_i^*=x_j^*$ holds.
\end{lemma}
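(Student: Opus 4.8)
The plan is to exploit a symmetry of the family $\mathcal{F}$ together with the uniqueness of the minimiser already established in \fullref{prop:3:1}. First I would introduce the three automorphisms of the free group $\Gamma=\langle\xi,\eta\rangle$ defined on generators by $a\co\xi\mapsto\xi^{-1},\ \eta\mapsto\eta$; by $b\co\xi\mapsto\xi,\ \eta\mapsto\eta^{-1}$; and by $s\co\xi\mapsto\eta,\ \eta\mapsto\xi$. These permute the set $\{\xi,\xi^{-1},\eta,\eta^{-1}\}$, and one checks that $a^2=b^2=s^2=1$ with $sas=b$, so they generate a dihedral group $G$ of order $8$. Since each $g\in G$ merely relabels the letters of a reduced word, it carries $\Psi$ to $\Psi$ and hence induces a permutation $\pi_g$ of $\{1,\dots,28\}$ via the enumeration $p$ in (\ref{list:1:4}). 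Each $\pi_g$ then acts on $\mathbb{R}^{28}$ by permuting coordinates, and it preserves $\Delta^{27}$.

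The heart of the argument is to prove that $F$ is $G$--invariant, that is $F\circ\pi_g=F$ for every $g\in G$. Each $g$ carries the decomposition $\Gamma_{\mathcal{D}}$ to itself by sending the set $J_\psi$ to $J_{g(\psi)}$; consequently it transforms every group--theoretical relation $(\gamma,s(\gamma),S(\gamma))$ of \fullref{lem:2:1} into another such relation $(g\gamma,\,g\,s(\gamma),\,g\,S(\gamma))$, while preserving the functional shape $\sigma(\cdot)\,\sigma(\cdot)$ of the displacement bound attached to it in \fullref{dispfunc}. I would verify, on one representative of each of the three formula types occurring in $\mathcal{F}$ — those of the form $\sigma(\Sigma_J^i)\,\sigma(x_l)$, $\sigma(\Sigma_I^j)\,\sigma(x_l)$, and $\sigma(\Sigma_i^J)\,\sigma(x_l)$ — that $\pi_g$ sends the blocks $J_1,\dots,J_4$ among themselves and the triples $I_1,\dots,I_8$ among themselves, so that $f_i\circ\pi_g=f_{\pi_g^{-1}(i)}$. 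This shows $\mathcal{F}$ is setwise fixed by $G$, whence $F\circ\pi_g=F$.

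With $G$--invariance in hand, the conclusion is immediate: for any $g\in G$ the point $\pi_g\cdot\xs$ lies in $\Delta^{27}$ and satisfies $F(\pi_g\cdot\xs)=F(\xs)=\alpha_*$, so it is again a global minimiser of $F$. The uniqueness asserted in \fullref{prop:3:1} then forces $\pi_g\cdot\xs=\xs$ for every $g$, i.e. $\xs$ is constant on the orbits of $G$ acting on $\{1,\dots,28\}$. Finally I would read off these orbits from the action on the words in (\ref{list:1:4}): the conjugates of the form $a\,b\,a^{-1}$ comprise $\{1,5,9,13,15,19,23,27\}$, the words $a\,b\,a$ comprise $\{2,6,8,12,16,20,22,26\}$, the words $a\,b^{\pm2}$ comprise $\{3,4,10,11,17,18,24,25\}$, and the squares $a^{\pm2}$ comprise $\{7,14,21,28\}$ — exactly the four sets of indices in the statement, on each of which the coordinates of $\xs$ must therefore agree.

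I expect the main obstacle to be the bookkeeping required to confirm that the particular subset $\mathcal{F}\subset\mathcal{G}$ is itself $G$--invariant. Because $\mathcal{G}$ contains several functions sharing the same subscript, one must check that the relation selected for a given index in $\mathcal{F}$ is genuinely the $G$--image of the relation selected for every other index in its orbit, a finite but delicate matching across \fullref{table:2:6}, \fullref{table:2:7} and \fullref{table:2:8}. Once $\pi_g$ is seen to respect the block and triple partitions recorded in (\ref{Ind:2:1}), however, this collapses to comparing a handful of representative formulas, after which the orbit computation and the appeal to \fullref{prop:3:1} finish the proof.
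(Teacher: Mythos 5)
Your proposal is correct and is essentially the paper's own argument: the paper's proof also exploits symmetry plus the uniqueness from \fullref{prop:3:1}, defining three explicit involutions $\tau_1,\tau_2,\tau_3\in S_{28}$ (which are exactly the coordinate permutations induced by your relabeling automorphisms $b\co\eta\mapsto\eta^{-1}$, $a\co\xi\mapsto\xi^{-1}$, and $\xi\mapsto\eta^{-1},\ \eta\mapsto\xi^{-1}$), verifying $f_i(T_l(\tb{x}))=f_{\tau_l(i)}(\tb{x})$ so that $F\circ T_l=F$, and concluding $T_l^{-1}(\xs)=\xs$. Your dihedral group $\langle a,b,s\rangle$ coincides with the group generated by the paper's three permutations, so the orbits you read off are the same four index sets, and your bookkeeping plan (blocks $J_i$ and triples $I_j$ preserved) is precisely what makes the paper's asserted identity $f_i\circ T_l=f_{\tau_l(i)}$ hold.
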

\begin{proof}
Consider the permutations $\tau_1$, $\tau_2$ and $\tau_3$ in the symmetric group $S_{28}$ defined below: 
\begin{equation*}
\begin{multlined}
\tau_1=(1\ \ 5)(2\ \ 6)(3\ \ 4)
(8\ \ 16)(9\ \ 15)(10\ \ 17)(11\ \ 18)
{(12\ \ 20)(13\ \ 19)(14\ \ 21)(22\ \ 26)(23\ \ 27)(24\ \ 25)
,}
\end{multlined}
\end{equation*}
\begin{equation*}
\begin{multlined}
\tau_2=(1\ \ 23)(2\ \ 22)(3\ \ 24)(4\ \ 25)(5\ \ 27)(6\ \ 26)(7\ \ 28)(8\ \ 12)
{(9\ \ 13)(10\ \ 11)
(15\ \ 19)(16\ \ 20)(17\ \ 18)
,}
\end{multlined}
\end{equation*}
\begin{equation*}
\begin{multlined}
\tau_3=(1\ \ 13)(2\ \ 12)(3\ \ 11)(4\ \ 10)(5\ \ 9)(6\ \ 8)(7\ \ 14)(15\ \ 27)\\
\shoveleft[8.5cm]{(16\ \ 26)(17\ \ 25)(18\ \ 24)(19\ \ 23)(20\ \ 22)(21\ \ 28).}
\end{multlined}
\end{equation*}
Let $T_l\co\Delta^{27}\to\Delta^{27}$ be the transformation defined by $x_i\mapsto x_{\tau_l(i)}$  for $l=1,2,3$. Note that $T_l(\Delta^{27})=\Delta^{27}$ for every $l$. Let $H_l\co\Delta^{27}\to\mathbb{R}$ be the map so that $H_l(\tb{x})=\max\{(f_i\circ T_l)(\tb{x})\co i=1,2,\dots,28\}$. Then we have $f_i(T_l(\tb{x}))=f_{\tau_l(i)}(\tb{x})$ for every $\tb{x}\in\Delta^{27}$ for every $i=1,2,\dots,28$ for every $l=1,2,3$. This implies that $F(\tb{x})=H_l(\tb{x})$ for every $\tb{x}$ and for every $l$. Since $\xs$ is unique by \fullref{prop:3:1}, we obtain $T_l^{-1}(\xs)=\xs$ for $l=1,2,3$. Then the lemma follows. 
\end{proof}

\fullref{lem:3:3} implies that $f_i(\xs)=f_j(\xs)$ for every $i,j\in\{1,5,9,13,15,19,23,27\}$. Also 
for every $i,j\in\{2,6,8,12,16,20,22,26\}$,
 $i,j\in\{3,4,10,11,17,18,24,25\}$ and 
$i,j\in\{7,14,21,28\}$ we have $f_i(\xs)=f_j(\xs)$. Therefore, there are four values to consider at $\xs$ to compute $\alpha_*$: $f_1(\xs)$, $f_2(\xs)$, $f_3(\xs)$ and $f_7(\xs)$ which are given as 
\begin{eqnarray}
\frac{1-2(x_1^*+x_2^*+x_3^*)-x_7^*}{2(x_1^*+x_2^*+x_3^*)+x_7^*}\cdot\frac{1-x_1^*}{x_1^*} & =& \alpha_*,\label{3:1}\\
\frac{1-7(x_1^*+x_2^*+x_3^*)-4x_7^*}{7(x_1^*+x_2^*+x_3^*)+4x_7^*}\cdot\frac{1-x_2^*}{x_2^*}& \leq & \alpha_*, \label{3:2}\\
\frac{1-7(x_1^*+x_2^*+x_3^*)-4x_7^*}{7(x_1^*+x_2^*+x_3^*)+4x_7^*}\cdot\frac{1-x_3^*}{x_3^*}& \leq& \alpha_*, \label{3:3}\\
\frac{1-6(x_1^*+x_2^*+x_3^*)-3x_7^*}{6(x_1^*+x_2^*+x_3^*)+3x_7^*}\cdot\frac{1-x_7^*}{x_7^*}& \leq& \alpha_*.\label{3:4}
\end{eqnarray}
We shall show next that $f_2(\xs)=f_3(\xs)=f_7(\xs)=\alpha_*$. To this purpose we will need the statement below:
\begin{lemma}\label{flipside}
For $1\leq k\leq n-1$, let $f_1$,\dots, $f_k$ be smooth functions on an open neighbourhood $U$ of the $(n-1)-$simplex $\Delta^{n-1}$ in $\mathbb{R}^n$. If at some $\tb{x}\in\Delta^{n-1}$ the collection $\{\nabla f_1(\tb{x}), \nabla f_2(\tb{x}),\dots,\nabla f_k(\tb{x}), \langle 1,\dots,1\rangle\}$ of vectors in $\mathbb{R}^n$ is linearly independent, then there exists a vector $\vec u\in T_{\tb{x}}\Delta^{n-1}$ such that each $f_i$ for $i=1,\dots,k$ decreases in the direction of $\vec u$ at $\tb{x}$.
\end{lemma}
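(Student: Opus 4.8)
The plan is to reduce the statement to a single fact of linear algebra: that the directional derivatives $\vec u\mapsto\nabla f_i(\tb{x})\cdot\vec u$, restricted to the tangent space, can be prescribed independently. Recall that $T_{\tb{x}}\Delta^{n-1}$ is exactly the hyperplane $\langle 1,\dots,1\rangle^{\perp}$ of vectors whose coordinates sum to zero, and that $f_i$ decreases in the direction $\vec u$ at $\tb{x}$ precisely when $\nabla f_i(\tb{x})\cdot\vec u<0$. So it suffices to produce a single $\vec u\in\langle 1,\dots,1\rangle^{\perp}$ with $\nabla f_i(\tb{x})\cdot\vec u<0$ for all $i=1,\dots,k$ simultaneously.

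First I would pass to the orthogonal projections $g_i$ of the gradients $\nabla f_i(\tb{x})$ onto the hyperplane $W=\langle 1,\dots,1\rangle^{\perp}$, that is $g_i=\nabla f_i(\tb{x})-\tfrac{1}{n}\left(\nabla f_i(\tb{x})\cdot\langle 1,\dots,1\rangle\right)\langle 1,\dots,1\rangle$. The point of projecting is twofold. On the one hand, for any $\vec u\in W$ we have $\nabla f_i(\tb{x})\cdot\vec u=g_i\cdot\vec u$, since the discarded component is a multiple of $\langle 1,\dots,1\rangle$ while $\vec u\perp\langle 1,\dots,1\rangle$. On the other hand, the vectors $g_1,\dots,g_k$ are linearly independent inside $W$: if $\sum_i c_i g_i=0$ then $\sum_i c_i\nabla f_i(\tb{x})$ is a scalar multiple of $\langle 1,\dots,1\rangle$, and the hypothesis that $\{\nabla f_1(\tb{x}),\dots,\nabla f_k(\tb{x}),\langle 1,\dots,1\rangle\}$ is linearly independent forces every $c_i=0$.

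Since $\dim W=n-1\geq k$, the family $g_1,\dots,g_k$ is a linearly independent set in $W$, so the linear map $L\co W\to\R^k$, $L(\vec u)=(g_1\cdot\vec u,\dots,g_k\cdot\vec u)$, has rank $k$ and is therefore surjective. Hence I can choose $\vec u\in W=T_{\tb{x}}\Delta^{n-1}$ with $L(\vec u)=(-1,\dots,-1)$; for this $\vec u$ we get $\nabla f_i(\tb{x})\cdot\vec u=g_i\cdot\vec u=-1<0$ for each $i$, which is exactly the required simultaneous decrease.

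The only real content, and the step I would be most careful about, is the independence of the projected gradients $g_1,\dots,g_k$ inside the tangent hyperplane; this is where the presence of $\langle 1,\dots,1\rangle$ in the hypothesised independent set is used in an essential way, since it is precisely what prevents the gradients from becoming dependent after projection onto $W$. Everything else is the standard observation that a tuple of linearly independent functionals can be made to hit the all-$(-1)$ target.
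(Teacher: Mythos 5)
Your proof is correct: the projection $g_i=\nabla f_i(\tb{x})-\tfrac{1}{n}\left(\nabla f_i(\tb{x})\cdot\langle 1,\dots,1\rangle\right)\langle 1,\dots,1\rangle$ preserves linear independence precisely because of the hypothesised independence of $\{\nabla f_1(\tb{x}),\dots,\nabla f_k(\tb{x}),\langle 1,\dots,1\rangle\}$, and the resulting surjectivity of $\vec u\mapsto(g_1\cdot\vec u,\dots,g_k\cdot\vec u)$ on $W=\langle 1,\dots,1\rangle^{\perp}=T_{\tb{x}}\Delta^{n-1}$ delivers the required direction. The paper itself omits the proof, deferring to \cite[Lemma 4.10]{Y1}, where the argument is the same rank observation — the full-rank linear system $\nabla f_i(\tb{x})\cdot\vec u<0$, $\langle 1,\dots,1\rangle\cdot\vec u=0$ is solvable — so your route is essentially the intended one.
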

Interested readers may refer to \cite[Lemma 4.10]{Y1} for its proof.  We have the following statement:
\begin{proposition}\label{prop:3:2}
Let $\mathcal{F}=\{f_i\}$ for ${i\in I=\{1,2,\dots,28\}}$ be the set of displacement functions listed in  \fullref{dispfunc} and $F$ be as in (\ref{F}).  If $\xs$ is the point such that $F(\xs)=\alpha_*$, then $\xs$ is in the set $\Delta_{27}=\{\tb{x}\in\Delta^{27}\co f_i(\tb{x})=f_j(\tb{x})\tnr{ for every } i,j\in I\}$. 
\end{proposition}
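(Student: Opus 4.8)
The plan is to argue by contradiction, using the symmetry reduction already in hand. By \fullref{lem:3:3} the twenty-eight numbers $f_i(\xs)$ collapse onto the four values $f_1(\xs)$, $f_2(\xs)$, $f_3(\xs)$, $f_7(\xs)$ recorded in (\ref{3:1})--(\ref{3:4}); and \fullref{firstfive} together with \fullref{lem:3:3} (which forces the coordinates in each symmetric orbit to agree) gives $f_l(\xs)=\alpha_*$ for every $l\in J=\{1,5,9,13,15,19,23,27\}$. Thus the orbit $J$ is always active, and the assertion $\xs\in\Delta_{27}$ is equivalent to the three equalities $f_2(\xs)=f_3(\xs)=f_7(\xs)=\alpha_*$. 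It therefore suffices to rule out that any one of the inequalities (\ref{3:2})--(\ref{3:4}) is strict.

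Suppose for contradiction that $f_l(\xs)<\alpha_*$ for some $l\in\{2,3,7\}$. By \fullref{lem:3:3} the active set --- the indices $i$ with $f_i(\xs)=\alpha_*$ --- is a union of the four symmetric orbits and always contains $J$; the possibilities are indexed by which of the three remaining orbits (represented by $2$, $3$, $7$) are active, at least one being inactive by assumption. For each such configuration I would produce a single vector $\vec u\in T_{\xs}\Delta^{27}$ along which every active $f_l$ strictly decreases at $\xs$, i.e. $\nabla f_l(\xs)\cdot\vec u<0$ for all active $l$. Two mechanisms are available. When the gradients $\{\nabla f_l(\xs)\}$ over the active indices, together with $\langle 1,\dots,1\rangle$, form a linearly independent family in $\mathbb{R}^{28}$, \fullref{flipside} furnishes such a $\vec u$ directly; here the independence is read off the sparse gradient matrix whose entries are the $C_i^j$ computed in \fullref{firstfive}, and the orbit symmetry reduces the rank check to a small block determinant. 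In the configurations where independence fails, I would instead write $\vec u$ explicitly --- respecting the four-orbit symmetry of $\xs$, in the spirit of the vector used in the proof of \fullref{firstfive} --- and verify $\nabla f_l(\xs)\cdot\vec u<0$ by hand from the formulas for the $C_i^j$, distinguishing the diagonal terms $C_l^l$ from the off-diagonal terms $C_l^1$.

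Granting such a descent direction $\vec u$, the contradiction is immediate. The inactive functions satisfy $f_k(\xs)<\alpha_*$ strictly, so by continuity they stay below $\alpha_*$ on a short segment $\xs+t\vec u$, $0<t<\epsilon$; on the same segment every active $f_l$ drops below $\alpha_*$ since its directional derivative at $\xs$ is negative. As $F$ coincides with the maximum of the active functions on a neighbourhood of $\xs$, this yields $F(\xs+t\vec u)<\alpha_*$ for small $t>0$, contradicting $F\ge\alpha_*=\min_{\tb{x}\in\Delta^{27}}F(\tb{x})$. Hence all three of (\ref{3:2})--(\ref{3:4}) are equalities, and \fullref{lem:3:3} then propagates $f_i(\xs)=\alpha_*$ to every $i\in I$, so that $\xs\in\Delta_{27}$.

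The hard part will be the construction and verification of $\vec u$ in each configuration. Proving linear independence of the active gradients together with $\langle 1,\dots,1\rangle$ demands careful bookkeeping of the block structure of the gradient matrix, and in the configurations where this fails one must hand-build a symmetric direction and sign-check a handful of directional derivatives. The inputs that make these signs come out negative are the bound $\alpha_*\le\alpha$ from \fullref{lemtwo}, the membership $\xs\in C=\bigcap_{l\in J}C_{f_l}$ from \fullref{unique5} and \fullref{unique6}, and the resulting quantitative control on the reduced coordinates $x_1^*,x_2^*,x_3^*,x_7^*$ entering the $C_i^j$.
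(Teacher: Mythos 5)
Your plan is essentially the paper's own proof: after the same reduction via \fullref{firstfive} and \fullref{lem:3:3} to the three equalities $f_2(\xs)=f_3(\xs)=f_7(\xs)=\alpha_*$, the paper also argues by contradiction with descent directions in $T_{\xs}\Delta^{27}$, using explicit symmetry-respecting vectors $\vec v_1,\vec v_2$ (with manifestly negative directional derivatives built from the $C_i^j$) for the orbits of $2$ and $3$, and \fullref{flipside} with a rank computation for the orbit of $7$. The only wrinkle your configuration-by-configuration framing glosses over is that the paper's sequential order matters: establishing $f_2(\xs)=f_3(\xs)=\alpha_*$ first yields $x_2^*=x_3^*$, hence $C_2^1=C_3^4$ and $C_3^3=C_2^2-C_2^1$, which is exactly what makes the full-rank verification of the $25\times 28$ gradient matrix tractable, including ruling out the degeneracies $3C_2^1-C_2^2=0$ and $2C_2^1-C_2^2=0$.
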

\begin{proof}
By \fullref{lem:3:3} it is enough to show that $f_2(\xs)=f_3(\xs)=f_7(\xs)=\alpha_*$. Remember that  $C_i^j$ denotes the partial derivative of $f_i$ with respect to $x_j$ at $\xs$. We calculate the constants below
\begin{equation*}
\begin{array}{lll}
\dis C_1^1=-\frac{\sigma(x_1^*)}{\left(\Sigma_{J}^1(\xs)\right)^2}-\frac{\sigma\left(\Sigma_{J}^1(\xs)\right)}{(x_1^*)^2}, & \dis C_1^2=-\frac{\sigma(x_1^*)}{ \left(\Sigma_{J}^1(\xs)\right)^2}, & C_5^1=-\dis\frac{\sigma(x_5^*)}
{\left(\Sigma_J^1(\xs)\right)^2},\\
 C_5^5=-\dis\frac{\sigma(x_5^*)}{\left(\Sigma_{J}^1(\xs)\right)^2}-\frac{\sigma\left(\Sigma_{J}^1(\xs)\right)}{(x_5^*)^2},  & C_9^8=-\dis\frac{\sigma(x_9^*)}{ \left(\Sigma_J^2(\xs)\right)^2}, & C_{13}^8=-\dis\frac{\sigma(x_{13}^*)}
{\left(\Sigma_J^2(\xs)\right)^2},
\end{array}
\end{equation*}
\begin{equation*}
\begin{array}{ll}
 C_9^9=-\dis\frac{\sigma(x_9^*)}{\left(\Sigma_{J}^2(\xs)\right)^2}-\frac{\sigma\left(\Sigma_{J}^2(\xs)\right)}{(x_9^*)^2}, &  C_{13}^{13}=-\dis\frac{\sigma(x_{13}^*)}{\left(\Sigma_{J}^2(\xs)\right)^2}-\frac{\sigma\left(\Sigma_{J}^2(\xs)\right)}{(x_{13}^*)^2},
\end{array}
\end{equation*}
\begin{equation*}
\begin{array}{lll}
C_{15}^{15}=-\dis\frac{\sigma(x_{15}^*)}{\left(\Sigma_{J}^3(\xs)\right)^2}-\frac{\sigma\left(\Sigma_{J}^3(\xs)\right)}{(x_{13}^*)^2},  & C_{15}^{16}=-\dis\frac{\sigma(x_{15}^*)}
{\left(\Sigma_J^3(\xs)\right)^2},& C_{19}^{15}=-\dis\frac{\sigma(x_{19}^*)}
{\left(\Sigma_J^3(\xs)\right)^2},
\end{array}
\end{equation*}
\begin{equation*}
\begin{array}{lll}
C_{19}^{19}=-\dis\frac{\sigma(x_{19}^*)}{\left(\Sigma_{J}^3(\xs)\right)^2}-\frac{\sigma\left(\Sigma_{J}^3(\xs)\right)}{(x_{19}^*)^2},  & C_{23}^{22}=-\dis\frac{\sigma(x_{23}^*)}
{\left(\Sigma_J^4(\xs)\right)^2}, & C_{27}^{22}=-\dis\frac{\sigma(x_{27}^*)}
{ \left(\Sigma_J^4(\xs)\right)^2}, 
\end{array}
\end{equation*}
\begin{equation*}
\begin{array}{ll}
C_{23}^{23}=-\dis\frac{\sigma(x_{23}^*)}{\left(\Sigma_{J}^4(\xs)\right)^2}-\frac{\sigma\left(\Sigma_{J}^4(\xs)\right)}{(x_{23}^*)^2}, & 
 C_{27}^{27}=-\dis\frac{\sigma(x_{27}^*)}{\left(\Sigma_{J}^4(\xs)\right)^2}-\frac{\sigma\left(\Sigma_{J}^4(\xs)\right)}{(x_{27}^*)^2}.
 \end{array}
\end{equation*}

\noindent Since we have $\Sigma_J^1(\xs)=\Sigma_J^2(\xs)=\Sigma_J^3(\xs)=\Sigma_J^4(\xs)$ by \fullref{lem:3:3}, we derive that 
\begin{equation*}
\begin{array}{c}
C_1^1=C_5^5=C_9^9=C_{13}^{13}=C_{15}^{15}=C_{19}^{19}=C_{23}^{23}=C_{27}^{27}, \\
 C_1^2=C_5^1=C_9^8=C_{13}^{8}=C_{15}^{16}=C_{19}^{15}=C_{23}^{22}=C_{27}^{22}.
\end{array}
\end{equation*}
We have  $\Sigma_I^4(\xs)=\Sigma_I^6(\xs)=\Sigma_I^7(\xs)=\Sigma_I^1(\xs)=\Sigma_I^8(\xs)=\Sigma_I^2(\xs)=\Sigma_I^3(\xs)=\Sigma_I^5(\xs)$ by \fullref{lem:3:3}. For the constants given in \fullref{firstfive} this implies that
\begin{equation*}
\begin{array}{c}
C_2^2=C_6^6=C_8^8=C_{12}^{12}=C_{16}^{16}=C_{20}^{20}=C_{22}^{22}=C_{26}^{26}, \\
C_2^1=C_6^1=C_8^1=C_{12}^{4}=C_{16}^{1}=C_{20}^{1}=C_{21}^{1}=C_{26}^{1},\\
C_3^3=C_4^4=C_{10}^{10}=C_{11}^{11}=C_{17}^{17}=C_{18}^{18}=C_{24}^{24}=C_{25}^{25}, \\
C_3^4=C_4^1=C_{10}^1=C_{11}^{1}=C_{17}^{1}=C_{18}^{1}=C_{24}^{1}=C_{25}^{1}.
\end{array}
\end{equation*}
Note that we get $\Sigma_1^J(\xs)=\Sigma_2^J(\xs)=\Sigma_3^J(\xs)=\Sigma_4^J(\xs)$ by  \fullref{lem:3:3}. As a result we also see that 
\begin{equation*}
\begin{array}{cc}
C_7^7=C_{14}^{14}=C_{21}^{21}=C_{28}^{28}, & C_{7}^{8}=C_{14}^{1}=C_{21}^{1}=C_{28}^{1}.
\end{array}
\end{equation*}
Consider the $28\times 28$ matrix below whose rows are $\nabla f_1(\xs)$, $\nabla f_2(\xs)$, \dots, $\nabla f_{28}(\xs)$ 
\begin{equation*}
\scalebox{0.77}{$
\left[
\begin{array}{cccccccccccccccccccccccccccc}
 C_1^1 & C_1^2 & C_1^2 & C_1^2 & C_1^2 & C_1^2 & C_1^2 & 0 & 0 & 0 & 0 & 0 & 0 & 0 & 0 & 0 & 0 & 0 & 0 & 0 & 0 & 0 & 0 & 0 & 0 & 0 & 0 & 0 \\
 C_2^1 & C_2^2 & C_2^1 & C_2^1 &C_2^1& C_2^1 & C_2^1 & C_2^1 & C_2^1 & C_2^1 & 0 & 0 & 0 & C_2^1 & C_2^1 & C_2^1 & C_2^1 & C_2^1 & C_2^1 & C_2^1 & C_2^1 & C_2^1 & C_2^1 & C_2^1 & C_2^1 & C_2^1 & C_2^1 & C_2^1 \\
 0 & 0 & C_3^3 & C_3^4 &C_3^4 & C_3^4 & C_3^4 & C_3^4 & C_3^4 & C_3^4 & C_3^4 & C_3^4 & C_3^4 & C_3^4 & C_3^4 & C_3^4 & C_3^4 & C_3^4 & C_3^4 & C_3^4 & C_3^4 & C_3^4 & C_3^4 & C_3^4 & C_3^4 & C_3^4 & C_3^4 & C_3^4 \\
C_3^4 &  C_3^4 &  C_3^4 & C_3^3 & 0 & 0 &  C_3^4 &  C_3^4 &  C_3^4 &  C_3^4 &  C_3^4 &  C_3^4 &  C_3^4 &  C_3^4 &  C_3^4 &  C_3^4 &  C_3^4 &  C_3^4 &  C_3^4 &  C_3^4 &  C_3^4 &  C_3^4 &  C_3^4 &  C_3^4 &  C_3^4 &  C_3^4 &  C_3^4 & C_3^4 \\
 C_1^2 & C_1^2 & C_1^2 & C_1^2 & C_1^1 & C_1^2 & C_1^2  & 0 & 0 & 0 & 0 & 0 & 0 & 0 & 0 & 0 & 0 & 0 & 0 & 0 & 0 & 0 & 0 & 0 & 0 & 0 & 0 & 0 \\
 C_2^1 &   C_2^1 &   C_2^1 &  C_2^1& C_2^1 &  C_2^2 & C_2^1 &  C_2^1 &  C_2^1 &  C_2^1 &  C_2^1 &  C_2^1 &  C_2^1 &  C_2^1 &  C_2^1 &  C_2^1 &  C_2^1& 0 & 0 & 0 & C_2^1 &  C_2^1 &  C_2^1 &  C_2^1 &  C_2^1 &  C_2^1 &  C_2^1 &  C_2^1 \\
 0 & 0 & 0 & 0 & 0 & 0 & C_7^7  & C_7^8 & C_7^8 & C_7^8 &C_7^8 &C_7^8 &C_7^8 &C_7^8 &C_7^8 &C_7^8 &C_7^8 &C_7^8 &C_7^8 &C_7^8 &C_7^8 &C_7^8 &C_7^8 &C_7^8 &C_7^8 &C_7^8 &C_7^8 &C_7^8  \\
 C_2^1 & C_2^1 & C_2^1 & C_2^1 & C_2^1 & C_2^1 & C_2^1 & C_2^2 & C_2^1 & C_2^1 & C_2^1 & C_2^1 & C_2^1 & C_2^1 & C_2^1 & C_2^1 & C_2^1 & C_2^1 & C_2^1 & C_2^1 & C_2^1 & 0 & 0 & 0 & C_2^1 & C_2^1 & C_2^1 & C_2^1 \\
 0 & 0 & 0 & 0 & 0 & 0 & 0 &  C_1^2 & C_1^1 & C_1^2 & C_1^2 & C_1^2 & C_1^2 & C_1^2 & 0 & 0 & 0 & 0 & 0 & 0 & 0 & 0 & 0 & 0 & 0 & 0 & 0 & 0 \\
C_{3}^4 & C_{3}^4 & C_{3}^4 & C_{3}^4 & C_{3}^4 & C_{3}^4 & C_{3}^4 & 0 & 0 & C_{3}^{3} & C_{3}^4 & C_{3}^4 & C_{3}^4 & C_{3}^4 & C_{3}^4 & C_{3}^4 & C_{3}^4 & C_{3}^4 & C_{3}^4 & C_{3}^4 & C_{3}^4 & C_{3}^4 & C_{3}^4 & C_{3}^4 & C_{3}^4 & C_{3}^4 & C_{3}^4 & C_{3}^4 \\
 C_{3}^4 & C_{3}^4 & C_{3}^4 & C_{3}^4 & C_{3}^4 & C_{3}^4 & C_{3}^4 & C_{3}^4 & C_{3}^4 & C_{3}^4  & C_{3}^{3} & 0 & 0 & C_{3}^4 & C_{3}^4 & C_{3}^4 & C_{3}^4 & C_{3}^4 & C_{3}^4 & C_{3}^4 & C_{3}^4 & C_{3}^4 & C_{3}^4 & C_{3}^4 & C_{3}^4 & C_{3}^4 & C_{3}^4 & C_{3}^4 \\
 0 & 0 & 0 & C_{2}^1 & C_{2}^1 & C_{2}^1 & C_{2}^1 & C_{2}^1 & C_{2}^1 & C_{2}^1 & C_{2}^1 & C_{2}^{2 }& C_{2}^1 & C_{2}^1 & C_{2}^1 & C_{2}^1 & C_{2}^1 & C_{2}^1 & C_{2}^1 & C_{2}^1 & C_{2}^1 & C_{2}^1 & C_{2}^1 & C_{2}^1 & C_{2}^1 & C_{2}^1 & C_{2}^1 & C_{2}^1 \\
 0 & 0 & 0 & 0 & 0 & 0 & 0 &  C_1^2 & C_1^2 & C_1^2 & C_1^2 & C_1^2 & C_1^1 & C_1^2 & 0 & 0 & 0 & 0 & 0 & 0 & 0 & 0 & 0 & 0 & 0 & 0 & 0 & 0 \\
 C_{7}^8 & C_{7}^8 & C_{7}^8 & C_{7}^8 & C_{7}^8 & C_{7}^8 & C_{7}^8 & 0 & 0 & 0 & 0 & 0 & 0 & C_{7}^{7}  & C_{7}^8 & C_{7}^8 & C_{7}^8 & C_{7}^8 & C_{7}^8 & C_{7}^8 & C_{7}^8 & C_{7}^8 & C_{7}^8 & C_{7}^8 & C_{7}^8 & C_{7}^8 & C_{7}^8 & C_{7}^8 \\
 0 & 0 & 0 & 0 & 0 & 0 & 0 & 0 & 0 & 0 & 0 & 0 & 0 & 0 &  C_1^1 & C_1^2 & C_1^2 & C_1^2 & C_1^2 & C_1^2 & C_1^2 & 0 & 0 & 0 & 0 & 0 & 0 & 0 \\
 C_{2}^1 &  C_{2}^1 &  C_{2}^1 &  C_{2}^1 &  C_{2}^1 &  C_{2}^1 &  C_{2}^1 &  C_{2}^1 &  C_{2}^1 &  C_{2}^1 &  C_{2}^1 &  C_{2}^1 &  C_{2}^1 &  C_{2}^1 &  C_{2}^1 &  C_{2}^{2} &  C_{2}^1 &  C_{2}^1 &  C_{2}^1 &  C_{2}^1 &  C_{2}^1 &  C_{2}^1 &  C_{2}^1 &  C_{2}^1 & 0 & 0 & 0 &  C_{2}^1 \\
C_{3}^4& C_{3}^4 & C_{3}^4 & C_{3}^4 & C_{3}^4 & C_{3}^4 & C_{3}^4 & C_{3}^4 & C_{3}^4 & C_{3}^4 & C_{3}^4 & C_{3}^4 & C_{3}^4 & C_{3}^4 & 0 & 0 & C_{3}^{3} & C_{3}^4 & C_{3}^4 & C_{3}^4 & C_{3}^4 & C_{3}^4 & C_{3}^4 & C_{3}^4 & C_{3}^4 & C_{3}^4 & C_{3}^4 & C_{3}^4 \\
 C_{3}^4 &  C_{3}^4 &  C_{3}^4 &  C_{3}^4 &  C_{3}^4 &  C_{3}^4 &  C_{3}^4 &  C_{3}^4 &  C_{3}^4 &  C_{3}^4 &  C_{3}^4 &  C_{3}^4 &  C_{3}^4 &  C_{3}^4 &  C_{3}^4 &  C_{3}^4 &  C_{3}^4 &  C_{3}^3 & 0 & 0 &  C_{3}^4 &  C_{3}^4 &  C_{3}^4 &  C_{3}^4 &  C_{3}^4 &  C_{3}^4 &  C_{3}^4 & C_{3}^4 \\
 0 & 0 & 0 & 0 & 0 & 0 & 0 & 0 & 0 & 0 & 0 & 0 & 0 & 0 &  C_1^2 & C_1^2 & C_1^2 & C_1^2 & C_1^1 & C_1^2 & C_1^2  & 0 & 0 & 0 & 0 & 0 & 0 & 0 \\
 C_{2}^{1}  & C_{2}^{1} & C_{2}^{1} & 0 & 0 & 0 & C_{2}^{1} & C_{2}^{1} & C_{2}^{1} & C_{2}^{1} & C_{2}^{1} & C_{2}^{1} & C_{2}^{1} & C_{2}^{1} & C_{2}^{1} & C_{2}^{1} & C_{2}^{1} & C_{2}^{1} & C_{2}^{1} & C_{2}^{2} & C_{2}^{1} & C_{2}^{1} & C_{2}^{1} & C_{2}^{1} & C_{2}^{1} & C_{2}^{1} & C_{2}^{1} & C_{2}^{1} \\
 C_{7}^8 & C_{7}^8 & C_{7}^8 & C_{7}^8 & C_{7}^8 & C_{7}^8 & C_{7}^8 & C_{7}^8 & C_{7}^8 & C_{7}^8 & C_{7}^8 & C_{7}^8 & C_{7}^8 & C_{7}^8 & 0 & 0 & 0 & 0 & 0 & 0 & C_{7}^{7}  & C_{7}^8 & C_{7}^8 & C_{7}^8 & C_{7}^8 & C_{7}^8& C_{7}^8 &C_{7}^8 \\
 C_{2}^{1} & C_{2}^{1} & C_{2}^{1} & C_{2}^{1} & C_{2}^{1} & C_{2}^{1} & C_{2}^{1} & 0 & 0 & 0 & C_{2}^{1} & C_{2}^{1} & C_{2}^{1} & C_{2}^{1} & C_{2}^{1} & C_{2}^{1} & C_{2}^{1} & C_{2}^{1} & C_{2}^{1} & C_{2}^{1} & C_{2}^{1} & C_{2}^{2} & C_{2}^{1} & C_{2}^{1} & C_{2}^{1} & C_{2}^{1} & C_{2}^{1} & C_{2}^{1} \\
 0 & 0 & 0 & 0 & 0 & 0 & 0 & 0 & 0 & 0 & 0 & 0 & 0 & 0 & 0 & 0 & 0 & 0 & 0 & 0 & 0 &  C_1^2 & C_1^1 & C_1^2 & C_1^2 & C_1^2 & C_1^2 & C_1^2 \\
 C_{3}^{4} & C_{3}^{4} & C_{3}^{4} & C_{3}^{4} & C_{3}^{4} & C_{3}^{4} & C_{3}^{4} & C_{3}^{4} & C_{3}^{4} & C_{3}^{4} & C_{3}^{4} & C_{3}^{4} & C_{3}^{4} & C_{3}^{4} & C_{3}^{4} & C_{3}^{4} & C_{3}^{4} & C_{3}^{4} & C_{3}^{4} & C_{3}^{4} & C_{3}^{4} & 0 & 0 & C_{3}^{3} & C_{3}^{4} & C_{3}^{4} & C_{3}^{4} & C_{3}^{4} \\
C_{3}^{4} & C_{3}^{4} & C_{3}^{4} & C_{3}^{4} & C_{3}^{4} & C_{3}^{4} & C_{3}^{4} & C_{3}^{4} & C_{3}^{4} & C_{3}^{4} & C_{3}^{4} & C_{3}^{4} & C_{3}^{4} & C_{3}^{4} & C_{3}^{4} & C_{3}^{4} & C_{3}^{4} & C_{3}^{4} & C_{3}^{4} & C_{3}^{4} & C_{3}^{4} & C_{3}^{4} & C_{3}^{4} & C_{3}^{4} & C_{3}^{3} & 0 & 0 & C_{3}^{4} \\
C_{2}^{1} & C_{2}^{1} & C_{2}^{1} & C_{2}^{1} & C_{2}^{1} & C_{2}^{1} & C_{2}^{1} & C_{2}^{1} & C_{2}^{1} & C_{2}^{1} & C_{2}^{1} & C_{2}^{1} & C_{2}^{1} & C_{2}^{1}  & 0 & 0 & 0 & C_{2}^{1} & C_{2}^{1} & C_{2}^{1} & C_{2}^{1} & C_{2}^{1} & C_{2}^{1} & C_{2}^{1} & C_{2}^{1} & C_{2}^{2} & C_{2}^{1} & C_{2}^{1} \\
 0 & 0 & 0 & 0 & 0 & 0 & 0 & 0 & 0 & 0 & 0 & 0 & 0 & 0 & 0 & 0 & 0 & 0 & 0 & 0 & 0 &  C_1^2 & C_1^2 & C_1^2 & C_1^2 & C_1^2 & C_1^1 & C_1^2  \\
C_{7}^8 & C_{7}^8 & C_{7}^8 & C_{7}^8 & C_{7}^8 & C_{7}^8 & C_{7}^8 & C_{7}^8 & C_{7}^8 & C_{7}^8 & C_{7}^8 & C_{7}^8 & C_{7}^8 & C_{7}^8 & C_{7}^8 & C_{7}^8 & C_{7}^8 & C_{7}^8 & C_{7}^8 & C_{7}^8 & C_{7}^8 & 0 & 0 & 0 & 0 & 0 & 0 & C_{7}^{7}   
\end{array}\right].$}
\end{equation*}
Assume that $f_2(\xs)<\alpha_*$. Consider the vector $\vec v_1\in T_{\xs}\Delta^{27}$ with the coordinates:
\begin{equation*}
 (\vec v_1)_i=\left\{\begin{array}{rl}
 1   & \tnr{if $i=1,3,4,5,7,9,10,11,13,14,15,17,18,19,21,23,24,25,27,28$,}\\
-2  & \tnr{if $i=6,12,20,26$,}\\
-3  & \tnr{if $i=2,8,16,22$.}\end{array}\right.
\end{equation*}
For any given  indices $l\in J=\{1,5,9,13,15,19,23,27\}$, $i\in K=\{3,10,17,24\}$, $j\in L=\{4,11,18,25\}$ and $k\in N=\{7,14,21,28\}$ we calculate that 
\begin{equation*}
\nabla f_l(\xs)\cdot \vec v_1=C_1^1-C_1^2=-\frac{\sigma\left(\Sigma_{J}^1(\xs)\right)}{(x_1^*)^2}<0, 
\end{equation*}
\begin{equation*}
\nabla f_i(\xs)\cdot \vec v_1=C_3^3+C_3^4=-\frac{\sigma\left(\Sigma_{I}^1(\xs)\right)}{(x_3^*)^2}-\frac{(x_3^*)^2}{\sigma\left(\Sigma_{I}^1(\xs)\right)}<0, 
\end{equation*}
\begin{equation*}
\nabla f_j(\xs)\cdot \vec v_1=C_3^3=-\frac{\sigma\left(\Sigma_{I}^1(\xs)\right)}{(x_3^*)^2}<0,\quad \nabla f_k(\xs)\cdot \vec v_1=C_7^7=-\frac{\sigma\left(\Sigma_{J}^1(\xs)\right)}{(x_7^*)^2}<0. 
\end{equation*}
This implies that values of $f_l$ for $l\in J\cup K\cup L\cup N$ decrease along a line segment in the direction of $\vec v_1$. For a short distance along $\vec v_1$  values of $f_l$ for $l\in\{2,6,8,12,16,20,22,26\}$ are smaller than $\alpha_*$. So there exists a point $\tb{z}\in\Delta^{27}$ such that  $f_l(\tb{z})<\alpha_*$ for every $l\in I=\{1,2,\dots,28\}$. This is a contradiction. Hence we find that $f_2(\xs)=\alpha_*$. 

Assume that $f_3(\xs)<\alpha_*$. We introduce the vector $\vec v_2\in T_{\xs}\Delta^{27}$ with the coordinates 
\begin{equation*}
 (\vec v_2)_i=\left\{\begin{array}{rl}
 1   & \tnr{if $i=1,2,5,6,7,8,9,12,13,14,15,16,19,20,21,22,23,26,27,28$,}\\
-2  & \tnr{if $i=4,11,18,25$,}\\
-3  & \tnr{if $i=3,10,17,24$.}\end{array}\right.
\end{equation*}
For $l\in J$, $i\in K'=\{2,6,16,20\}$, $j\in L'=\{8,12,22,26\}$ and $k\in N$  we calculate 
\begin{equation*}
\nabla f_l(\xs)\cdot \vec v_2=C_1^1-C_1^2=-\frac{\sigma\left(\Sigma_{J}^1(\xs)\right)}{(x_1^*)^2}<0, \ \  \nabla f_k(\xs)\cdot \vec v_1=C_7^7=-\frac{\sigma\left(\Sigma_{J}^1(\xs)\right)}{(x_7^*)^2}<0,
\end{equation*}
\begin{equation*}
\nabla f_i(\xs)\cdot \vec v_2=C_2^2-C_2^1=-\frac{\sigma\left(\Sigma_{I}^4(\xs)\right)}{(x_2^*)^2}<0, 
\end{equation*}
which show that  values of $f_l$ for $l\in J\cup K'\cup L'\cup N$ decrease along a line segment in the direction of $\vec v_2$. Values of $f_l$ for $l\in\{2,6,8,12,16,20,22,26\}$ are smaller than $\alpha_*$ for a short distance along $\vec v_2$. As a result there exists a point $\tb{w}\in\Delta^{27}$ such that  $f_l(\tb{w})<\alpha_*$ for every $l\in I=\{1,2,\dots,28\}$, a contradiction. We derive that $f_3(\xs)=\alpha_*$. 

Since we have $f_2(\xs)=f_3(\xs)=\alpha_*$, we obtain  $x_2^*=x_3^*$. Then we see that $C_2^1=C_3^4$. 
Also we find that $C_3^3=C_2^2-C_2^1$. Now assume that $f_7(\xs)<\alpha_*$. Then we construct the following $25\times 28$ matrix $\mathcal{M}$:  
\begin{equation*}
\scalebox{0.58}{$
\left[
\begin{array}{cccccccccccccccccccccccccccc}
 C_1^1 & C_1^2 & C_1^2 & C_1^2 & C_1^2 & C_1^2 & C_1^2 & 0 & 0 & 0 & 0 & 0 & 0 & 0 & 0 & 0 & 0 & 0 & 0 & 0 & 0 & 0 & 0 & 0 & 0 & 0 & 0 & 0 \\ 
 C_2^1 & C_2^2 & C_2^1 & C_2^1 &C_2^1& C_2^1 & C_2^1 & C_2^1 & C_2^1 & C_2^1 & 0 & 0 & 0 & C_2^1 & C_2^1 & C_2^1 & C_2^1 & C_2^1 & C_2^1 & C_2^1 & C_2^1 & C_2^1 & C_2^1 & C_2^1 & C_2^1 & C_2^1 & C_2^1 & C_2^1 \\ 
 0 & 0 & C_2^2-C_2^1 & C_2^1 & C_2^1 & C_2^1 & C_2^1 & C_2^1 & C_2^1 & C_2^1 & C_2^1 & C_2^1 & C_2^1 & C_2^1 & C_2^1 & C_2^1 & C_2^1 & C_2^1 & C_2^1 & C_2^1 & C_2^1 & C_2^1 & C_2^1 & C_2^1 & C_2^1 & C_2^1 & C_2^1 & C_2^1 \\ 
C_2^1 &  C_2^1 &  C_2^1 & C_2^2-C_2^1 & 0 & 0 &  C_2^1 &  C_2^1 &  C_2^1 &  C_2^1 &  C_2^1 &  C_2^1 &  C_2^1 &  C_2^1 &  C_2^1 &  C_2^1 &  C_2^1 &  C_2^1 &  C_2^1 &  C_2^1 &  C_2^1 &  C_2^1 &  C_2^1 &  C_2^1 &  C_2^1 &  C_2^1 &  C_2^1 & C_2^1 \\ 
 C_1^2 & C_1^2 & C_1^2 & C_1^2 & C_1^1 & C_1^2 & C_1^2  & 0 & 0 & 0 & 0 & 0 & 0 & 0 & 0 & 0 & 0 & 0 & 0 & 0 & 0 & 0 & 0 & 0 & 0 & 0 & 0 & 0 \\ 
 C_2^1 &   C_2^1 &   C_2^1 &  C_2^1& C_2^1 &  C_2^2 & C_2^1 &  C_2^1 &  C_2^1 &  C_2^1 &  C_2^1 &  C_2^1 &  C_2^1 &  C_2^1 &  C_2^1 &  C_2^1 &  C_2^1& 0 & 0 & 0 & C_2^1 &  C_2^1 &  C_2^1 &  C_2^1 &  C_2^1 &  C_2^1 &  C_2^1 &  C_2^1 \\ 
 C_2^1 & C_2^1 & C_2^1 & C_2^1 & C_2^1 & C_2^1 & C_2^1 & C_2^2 & C_2^1 & C_2^1 & C_2^1 & C_2^1 & C_2^1 & C_2^1 & C_2^1 & C_2^1 & C_2^1 & C_2^1 & C_2^1 & C_2^1 & C_2^1 & 0 & 0 & 0 & C_2^1 & C_2^1 & C_2^1 & C_2^1 \\ 
 0 & 0 & 0 & 0 & 0 & 0 & 0 &  C_1^2 & C_1^1 & C_1^2 & C_1^2 & C_1^2 & C_1^2 & C_1^2 & 0 & 0 & 0 & 0 & 0 & 0 & 0 & 0 & 0 & 0 & 0 & 0 & 0 & 0 \\ 
C_{2}^1 & C_{2}^1 & C_{2}^1 & C_{2}^1 & C_{2}^1 & C_{2}^1 & C_{2}^1 & 0 & 0 & C_{2}^{2}-C_2^1 & C_{2}^1 & C_{2}^1 & C_{2}^1 & C_{2}^1 & C_{2}^1 & C_{2}^1 & C_{2}^1 & C_{2}^1 & C_{2}^1 & C_{2}^1 & C_{2}^1 & C_{2}^1 & C_{2}^1 & C_{2}^1 & C_{2}^1 & C_{2}^1 & C_{2}^1 & C_{2}^1 \\ 
 C_{2}^1 & C_{2}^1 & C_{2}^1 & C_{2}^1 & C_{2}^1 & C_{2}^1 & C_{2}^1 & C_{2}^1 & C_{2}^1 & C_{2}^1  & C_{2}^{2}-C_2^1 & 0 & 0 & C_{2}^1 & C_{2}^1 & C_{2}^1 & C_{2}^1 & C_{2}^1 & C_{2}^1 & C_{2}^1 & C_{2}^1 & C_{2}^1 & C_{2}^1 & C_{2}^1& C_{2}^1 & C_{2}^1 & C_{2}^1 & C_{2}^1 \\ 
 0 & 0 & 0 & C_{2}^1 & C_{2}^1 & C_{2}^1 & C_{2}^1 & C_{2}^1 & C_{2}^1 & C_{2}^1 & C_{2}^1 & C_{2}^{2 }& C_{2}^1 & C_{2}^1 & C_{2}^1 & C_{2}^1 & C_{2}^1 & C_{2}^1 & C_{2}^1 & C_{2}^1 & C_{2}^1 & C_{2}^1 & C_{2}^1 & C_{2}^1 & C_{2}^1 & C_{2}^1 & C_{2}^1 & C_{2}^1 \\ 
 0 & 0 & 0 & 0 & 0 & 0 & 0 &  C_1^2 & C_1^2 & C_1^2 & C_1^2 & C_1^2 & C_1^1 & C_1^2 & 0 & 0 & 0 & 0 & 0 & 0 & 0 & 0 & 0 & 0 & 0 & 0 & 0 & 0 \\ 
 0 & 0 & 0 & 0 & 0 & 0 & 0 & 0 & 0 & 0 & 0 & 0 & 0 & 0 &  C_1^1 & C_1^2 & C_1^2 & C_1^2 & C_1^2 & C_1^2 & C_1^2 & 0 & 0 & 0 & 0 & 0 & 0 & 0 \\ 
 C_{2}^1 &  C_{2}^1 &  C_{2}^1 &  C_{2}^1 &  C_{2}^1 &  C_{2}^1 &  C_{2}^1 &  C_{2}^1 &  C_{2}^1 &  C_{2}^1 &  C_{2}^1 &  C_{2}^1 &  C_{2}^1 &  C_{2}^1 &  C_{2}^1 &  C_{2}^{2} &  C_{2}^1 &  C_{2}^1 &  C_{2}^1 &  C_{2}^1 &  C_{2}^1 &  C_{2}^1 &  C_{2}^1 &  C_{2}^1 & 0 & 0 & 0 &  C_{2}^1 \\ 
C_{2}^1 & C_{2}^1 & C_{2}^1 & C_{2}^1 & C_{2}^1 & C_{2}^1 & C_{2}^1 & C_{2}^1 & C_{2}^1 & C_{2}^1 & C_{2}^1 & C_{2}^1 & C_{2}^1 & C_{2}^1 & 0 & 0 & C_{2}^{2}-C_2^1 & C_{2}^1 & C_{2}^1 & C_{2}^1 & C_{2}^1 & C_{2}^1 & C_{2}^1 & C_{2}^1 & C_{2}^1 & C_{2}^1 & C_{2}^1 & C_{2}^1 \\ 
 C_{2}^1 &  C_{2}^1 &  C_{2}^1 &  C_{2}^1 &  C_{2}^1 &  C_{2}^1 &  C_{2}^1 &  C_{2}^1 &  C_{2}^1 &  C_{2}^1 &  C_{2}^1 &  C_{2}^1 &  C_{2}^1 &  C_{2}^1 &  C_{2}^1 &  C_{2}^1 &  C_{2}^1 &  C_{2}^2-C_2^1 & 0 & 0 &  C_{2}^1 &  C_{2}^1 &  C_{2}^1 &  C_{2}^1 &  C_{2}^1 &  C_{2}^1 &  C_{2}^1 & C_{2}^1 \\ 
 0 & 0 & 0 & 0 & 0 & 0 & 0 & 0 & 0 & 0 & 0 & 0 & 0 & 0 &  C_1^2 & C_1^2 & C_1^2 & C_1^2 & C_1^1 & C_1^2 & C_1^2  & 0 & 0 & 0 & 0 & 0 & 0 & 0 \\ 
 C_{2}^{1}  & C_{2}^{1} & C_{2}^{1} & 0 & 0 & 0 & C_{2}^{1} & C_{2}^{1} & C_{2}^{1} & C_{2}^{1} & C_{2}^{1} & C_{2}^{1} & C_{2}^{1} & C_{2}^{1} & C_{2}^{1} & C_{2}^{1} & C_{2}^{1} & C_{2}^{1} & C_{2}^{1} & C_{2}^{2} & C_{2}^{1} & C_{2}^{1} & C_{2}^{1} & C_{2}^{1} & C_{2}^{1} & C_{2}^{1} & C_{2}^{1} & C_{2}^{1} \\ 
 C_{2}^{1} & C_{2}^{1} & C_{2}^{1} & C_{2}^{1} & C_{2}^{1} & C_{2}^{1} & C_{2}^{1} & 0 & 0 & 0 & C_{2}^{1} & C_{2}^{1} & C_{2}^{1} & C_{2}^{1} & C_{2}^{1} & C_{2}^{1} & C_{2}^{1} & C_{2}^{1} & C_{2}^{1} & C_{2}^{1} & C_{2}^{1} & C_{2}^{2} & C_{2}^{1} & C_{2}^{1} & C_{2}^{1} & C_{2}^{1} & C_{2}^{1} & C_{2}^{1} \\ 
 0 & 0 & 0 & 0 & 0 & 0 & 0 & 0 & 0 & 0 & 0 & 0 & 0 & 0 & 0 & 0 & 0 & 0 & 0 & 0 & 0 &  C_1^2 & C_1^1 & C_1^2 & C_1^2 & C_1^2 & C_1^2 & C_1^2 \\ 
 C_{2}^{1} & C_{2}^{1} & C_{2}^{1} & C_{2}^{1} & C_{2}^{1} & C_{2}^{1} & C_{2}^{1} & C_{2}^{1} & C_{2}^{1} & C_{2}^{1} & C_{2}^{1} & C_{2}^{1} & C_{2}^{1} & C_{2}^{1} & C_{2}^{1} & C_{2}^{1} & C_{2}^{1} & C_{2}^{1} & C_{2}^{1} & C_{2}^{1} & C_{2}^{1} & 0 & 0 & C_{2}^{2}-C_2^1 & C_{2}^{1} & C_{2}^{1} & C_{2}^{1} & C_{2}^{1} \\ 
C_{2}^{1} & C_{2}^{1} & C_{2}^{1} & C_{2}^{1} & C_{2}^{1} & C_{2}^{1} & C_{2}^{1} & C_{2}^{1} & C_{2}^{1} & C_{2}^{1} & C_{2}^{1} & C_{2}^{1} & C_{2}^{1} & C_{2}^{1} & C_{2}^{1} & C_{2}^{1} & C_{2}^{1} & C_{2}^{1} & C_{2}^{1} & C_{2}^{1} & C_{2}^{1} & C_{2}^{1} & C_{2}^{1} & C_{2}^{1} & C_{2}^{2}-C_2^1 & 0 & 0 & C_{2}^{1} \\ 
C_{2}^{1} & C_{2}^{1} & C_{2}^{1} & C_{2}^{1} & C_{2}^{1} & C_{2}^{1} & C_{2}^{1} & C_{2}^{1} & C_{2}^{1} & C_{2}^{1} & C_{2}^{1} & C_{2}^{1} & C_{2}^{1} & C_{2}^{1}  & 0 & 0 & 0 & C_{2}^{1} & C_{2}^{1} & C_{2}^{1} & C_{2}^{1} & C_{2}^{1} & C_{2}^{1} & C_{2}^{1} & C_{2}^{1} & C_{2}^{2} & C_{2}^{1} & C_{2}^{1} \\ 
 0 & 0 & 0 & 0 & 0 & 0 & 0 & 0 & 0 & 0 & 0 & 0 & 0 & 0 & 0 & 0 & 0 & 0 & 0 & 0 & 0 &  C_1^2 & C_1^2 & C_1^2 & C_1^2 & C_1^2 & C_1^1 & C_1^2 \\ 
  1 & 1 & 1 & 1 & 1 & 1 & 1 & 1 & 1 & 1 & 1 & 1 & 1 & 1 & 1 & 1 & 1 & 1 & 1 & 1 & 1 & 1 & 1 & 1 & 1 & 1 & 1 & 1 
\end{array}\right].$}
\end{equation*}

\noindent Let $R_l$ denote the $l$th row of $\mathcal{M}$ for $l\in\{1,2,\dots,25\}$. Applying from left to right and row by row, we perform on $\mathcal{M}$ the row reduction operations listed in the 
\fullref{table:2:12} simultaneously:

\begin{table}[H]
\begin{center}
\scalebox{.85}{
\begin{tabular}{|c|c|c|c|c|}
\hline
$-C_2^1R_{25}+R_{23}\rightarrow R_{23} $                 & $ -C_2^1R_{25}+R_{22}\rightarrow R_{22} $               & $-C_2^1R_{25}+R_{21}\rightarrow R_{21} $                  & $-C_2^1R_{25}+R_{19}\rightarrow R_{19} $  &   $-C_2^1R_{25}+R_{18}\rightarrow R_{18} $                                  \\ \hline %
$-C_2^1R_{25}+R_{16}\rightarrow R_{16} $               & $-C_2^1R_{25}+R_{15}\rightarrow R_{15} $                  &  $-C_2^1R_{25}+R_{14}\rightarrow R_{14} $  &  $-C_2^1R_{25}+R_{10}\rightarrow R_{10} $      & $-C_2^1R_{25}+R_{9}\rightarrow R_{9} $                                  \\ \hline %
$-C_2^1R_{25}+R_{7}\rightarrow R_{7} $                     &  $-C_2^1R_{25}+R_{6}\rightarrow R_{6} $    &	$-C_2^1R_{25}+R_{4}\rightarrow R_{4} $  &	 $-C_2^1R_{25}+R_{2}\rightarrow R_{2} $		&	$-C_1^1R_{25}+R_{1}\rightarrow R_{1} $\\ \hline  %
$-C_1^2R_{25}+R_{5}\rightarrow R_{5} $  &	$R_{18}+R_{11}\rightarrow R_{11} $                             &  $R_{19}+R_{11}\rightarrow R_{11} $                           &  $R_{18}+R_{3}\rightarrow R_{3} $                                 &  $R_{19}+R_{3}\rightarrow R_{3} $ 				 \\ \hline %
$-R_{11}+R_{3}\rightarrow R_{3} $                                &  $-2R_{18}+R_{4}\rightarrow R_{4} $                            & $-R_{19}+R_{7}\rightarrow R_{7} $                                & $R_{18}+R_{4}\rightarrow R_{4} $ 	&	$\dis\frac{1}{C_2^2-C_2^1}R_{3}\rightarrow R_{3} $					\\ \hline %
 $\dis\frac{1}{C_2^2-C_2^1}R_{4}\rightarrow R_{4} $    & $-R_{19}+R_{9}\rightarrow R_{9} $                                &  $R_{19}+R_{7}\rightarrow R_{7} $ 	&	$-R_{21}+R_{7}\rightarrow R_{7} $  	& $\dis\frac{1}{C_2^2-C_2^1}R_{7}\rightarrow R_{7} $				\\ \hline   %
 $R_{12}+R_{1}\rightarrow R_{1} $                 		       & $R_{13}+R_{1}\rightarrow R_{1} $	&  $R_{20}+R_{1}\rightarrow R_{1} $	 &    $\dis\frac{1}{C_1^2-C_1^1}R_{1}\rightarrow R_{1} $	         & 	$R_8+R_5\rightarrow R_{5} $				 \\ \hline
$R_{13}+R_{5}\rightarrow R_{5} $ & $R_{20}+R_5\rightarrow R_{5} $                                   &  $\dis\frac{1}{C_1^1-C_1^2}R_{5}\rightarrow R_{5} $   & $-R_{10}+R_2\rightarrow R_{2} $                                   & $\dis\frac{1}{C_2^2-C_2^1}R_{2}\rightarrow R_{2} $ 			\\ \hline
$-R_{16}+R_6\rightarrow R_{6} $                                  &  $\dis\frac{1}{C_2^2-C_2^1}R_{6}\rightarrow R_{6} $   & $-R_{12}+R_8\rightarrow R_{8} $                                   & $\dis\frac{1}{C_1^1-C_1^2}R_{8}\rightarrow R_{8} $ &   $\dis\frac{1}{C_2^2-C_2^1}R_{9}\rightarrow R_{9}$			\\ \hline
$-R_{17}+R_{13}\rightarrow R_{13} $                           & $\dis\frac{1}{C_1^1-C_1^2}R_{13}\rightarrow R_{13} $ &  $-R_{24}+R_{20}\rightarrow R_{20} $  &	$\dis\frac{1}{C_1^1-C_1^2}R_{20}\rightarrow R_{20}$ &	$C_2^1R_{4}+R_{18}\rightarrow R_{18} $				\\ \hline
$C_2^1R_{5}+R_{18}\rightarrow R_{18} $                   & $C_2^1R_{6}+R_{18}\rightarrow R_{18} $ & 	$C_2^1R_{7}+R_{19}\rightarrow R_{19} $	& $C_2^1R_{8}+R_{19}\rightarrow R_{19} $      & 	$C_2^1R_{9}+R_{19}\rightarrow R_{19} $		\\ \hline
$-C_2^1R_{8}+R_{18}\rightarrow R_{18} $ &	$-C_1^2R_{7}+R_{12}\rightarrow R_{12} $                    & $-C_1^2R_{8}+R_{12}\rightarrow R_{12} $                   & $-C_1^2R_{9}+R_{12}\rightarrow R_{12} $                  &  $-R_{2}+R_{1}\rightarrow R_{1} $  						\\ \hline 
$-R_{3}+R_{1}\rightarrow R_{1} $                                  & $-R_{4}+R_{1}\rightarrow R_{1} $                                  & $-R_{5}+R_{1}\rightarrow R_{1} $                                &  $-R_{6}+R_{1}\rightarrow R_{1} $                           & $-R_{7}+R_{1}\rightarrow R_{1}$ \\ \hline 
$-R_{8}+R_{1}\rightarrow R_{1} $                                  & $-R_{9}+R_{1}\rightarrow R_{1} $                                 & $-C_2^1R_{1}+R_{11}\rightarrow R_{11} $                   &  $-C_2^1R_{8}+R_{11}\rightarrow R_{11} $ &	$-R_{11}+R_{10}\rightarrow R_{10} $				\\ \hline 
$2R_{18}+R_{10}\rightarrow R_{10} $                           &  $-R_{19}+R_{10}\rightarrow R_{10} $                         &  $R_{19}+R_{11}\rightarrow R_{11} $ &	$R_{19}+R_{18}\rightarrow R_{18}$  &  $C_2^1R_{13}+R_{15}\rightarrow R_{15} $		\\ \hline 
$-2C_2^1R_{13}+R_{11}\rightarrow R_{11} $              & $-C_1^2R_{13}+R_{17}\rightarrow R_{17} $  &  $-C_2^1R_{13}+R_{18}\rightarrow R_{18} $		&   $C_2^1R_{13}+R_{23}\rightarrow R_{23} $           &	$-R_{23}+R_{15}\rightarrow R_{15} $		\\ \hline 
$-R_{22}+R_{14}\rightarrow R_{14} $ &	$-R_{18}+R_{11}\rightarrow R_{11} $		&  $\dis\frac{1}{C_2^2-C_2^1}R_{14}\rightarrow R_{14} $			& 	 $\dis\frac{1}{C_2^2-C_2^1}R_{15}\rightarrow R_{15} $		&	$-C_1^2R_{14}+R_{17}\rightarrow R_{17} $ 		\\ \hline 
$-C_1^2R_{15}+R_{17}\rightarrow R_{17} $		 &  $C_2^1R_{14}+R_{23}\rightarrow R_{23}$     	        & $C_2^1R_{15}+R_{23}\rightarrow R_{23}$	&   $-C_2^1R_{20}+R_{18}\rightarrow R_{18} $  & 	$C_2^1R_{20}+R_{21}\rightarrow R_{21} $			 \\ \hline
$-C_1^2R_{20}+R_{24}\rightarrow R_{24} $  		& $R_{18}\leftrightarrow R_{19}$ &  $R_{17}\leftrightarrow R_{18}$	    &    $R_{16}\leftrightarrow R_{17} $      &  	$R_{15}\leftrightarrow R_{16}$					 \\  \hline  
$R_{14}\leftrightarrow R_{15}$ &	$R_{13}\leftrightarrow R_{14} $	&	$R_{20}\leftrightarrow R_{21}$		&	$R_{22}\leftrightarrow R_{23}$		&	$R_{21}\leftrightarrow R_{22} $							\\   \hline
&  $R_{23}\leftrightarrow R_{24}$                                   &  $R_{22}\leftrightarrow R_{23}$                  		        & $R_{20}\leftrightarrow R_{21}$                        		 &  \\   \hline
\end{tabular}}
\caption{Row reduction operations on $\mathcal{M}$.}\label{table:2:12}
\end{center}
\end{table}
\noindent Then we see that $\mathcal{M}$ is row equivalent to the matrix $\widetilde{\mathcal{M}}$ below:

\begin{equation*}
\scalebox{0.55}{$
\left[
\begin{array}{cccccccccc|cccc|ccc|cccccccc|ccc}
 0 & 0 & 0 & 0 & 0 & 0 & 1 & 0 & -1 & 0 & 2 & 2 & 1 & 1 & -1 & 1 & 1 & 2 & 1 & 2 & 1 & 2 & -1 & 2 & 1 & 1 & 1 & 1 \\
 0 & 1 & 0 & 0 & 0 & 0 & 0 & 0 & 0 & 0 & -1 & 0 & 0 & 0 & 0 & 0 & 0 & 0 & 0 & 0 & 0 & 0 & 0 & 0 & 0 & 0 & 0 & 0 \\
 0 & 0 & 1 & 0 & 0 & 0 & 0 & 0 & 0 & 0 & 0 & -1 & 0 & 0 & 0 & 0 & 0 & 0 & 0 & 0 & 0 & 0 & 0 & 0 & 0 & 0 & 0 & 0 \\
 0 & 0 & 0 & 1 & 0 & 0 & 0 & 0 & 0 & 0 & 0 & 0 & 0 & 0 & 0 & 0 & 0 & 0 & 0 & -1 & 0 & 0 & 0 & 0 & 0 & 0 & 0 & 0 \\
 0 & 0 & 0 & 0 & 1 & 0 & 0 & 0 & 1 & 0 & 0 & 0 & 0 & 0 & 1 & 0 & 0 & 0 & 0 & 0 & 0 & 0 & 1 & 0 & 0 & 0 & 0 & 0 \\
 0 & 0 & 0 & 0 & 0 & 1 & 0 & 0 & 0 & 0 & 0 & 0 & 0 & 0 & 0 & 0 & 0 & -1 & 0 & 0 & 0 & 0 & 0 & 0 & 0 & 0 & 0 & 0 \\
 0 & 0 & 0 & 0 & 0 & 0 & 0 & 1 & 0 & 0 & 0 & 0 & 0 & 0 & 0 & 0 & 0 & 0 & 0 & 0 & 0 & 0 & 0 & -1 & 0 & 0 & 0 & 0 \\
 0 & 0 & 0 & 0 & 0 & 0 & 0 & 0 & 1 & 0 & 0 & 0 & -1 & 0 & 0 & 0 & 0 & 0 & 0 & 0 & 0 & 0 & 0 & 0 & 0 & 0 & 0 & 0 \\
 0 & 0 & 0 & 0 & 0 & 0 & 0 & 0 & 0 & 1 & 0 & 0 & 0 & 0 & 0 & 0 & 0 & 0 & 0 & 0 & 0 & -1 & 0 & 0 & 0 & 0 & 0 & 0 \\\hline 
 0 & 0 & 0 & 0 & 0 & 0 & 0 & 0 & 0 & 0 & C_2^2-C_2^1 & C_2^1-C_2^2 & C_2^1 & 0 & 0 & 0 & 0 & -C_2^1 & 0 & C_2^2-2 C_2^1 & 0 & 4 C_2^1-2 C_2^2 & 0 & 2 C_2^1 & 0 & 0 & 0 & 0\\ 
 0 & 0 & 0 & 0 & 0 & 0 & 0 & 0 & 0 & 0 & -C_2^1 & C_2^2-2 C_2^1 & 0 & 0 & 0 & 0 & 0 & 0 & C_2^1 & 0 & 0 & C_2^2-2 C_2^1 & C_2^1 & -C_2^1 & 0 & 0 & 0 & 0 \\
 0 & 0 & 0 & 0 & 0 & 0 & 0 & 0 & 0 & 0 & C_1^2 & C_1^2 & C_1^1+C_1^2 & C_1^2 & 0 & 0 & 0 & 0 & 0 & 0 & 0 & C_1^2 & 0 & C_1^2 & 0 & 0 & 0 & 0 \\
 0 & 0 & 0 & 0 & 0 & 0 & 0 & 0 & 0 & 0 & 0 & 0 & -C_2^1 & 0 & 0 & 0 & 0 & 0 & 0 & 0 & 0 & C_2^2-2 C_2^1 & 0 & -C_2^1 & 0 & 0 & 0 & 0 \\\hline 
 0 & 0 & 0 & 0 & 0 & 0 & 0 & 0 & 0 & 0 & 0 & 0 & 0 & 0 & 1 & 0 & 0 & 0 & -1 & 0 & 0 & 0 & 0 & 0 & 0 & 0 & 0 & 0 \\
 0 & 0 & 0 & 0 & 0 & 0 & 0 & 0 & 0 & 0 & 0 & 0 & 0 & 0 & 0 & 1 & 0 & 0 & 0 & 0 & 0 & 0 & 0 & 0 & -1 & 0 & 0 & 0 \\
 0 & 0 & 0 & 0 & 0 & 0 & 0 & 0 & 0 & 0 & 0 & 0 & 0 & 0 & 0 & 0 & 1 & 0 & 0 & 0 & 0 & 0 & 0 & 0 & 0 & -1 & 0 & 0 \\\hline 
 0 & 0 & 0 & 0 & 0 & 0 & 0 & 0 & 0 & 0 & 0 & 0 & 0 & 0 & 0 & 0 & 0 & C_2^2-2 C_2^1 & -C_2^1 & -C_2^1 & 0 & 0 & 0 & 0 & 0 & 0 & 0 & 0 \\
 0 & 0 & 0 & 0 & 0 & 0 & 0 & 0 & 0 & 0 & 0 & 0 & 0 & 0 & 0 & 0 & 0 & C_1^2 & C_1^1+C_1^2 & C_1^2 & C_1^2 & 0 & 0 & 0 & C_1^2 & C_1^2 & 0 & 0 \\
 0 & 0 & 0 & 0 & 0 & 0 & 0 & 0 & 0 & 0 & 0 & 0 & 0 & 0 & 0 & 0 & 0 & -C_2^1 & C_2^1 & C_2^2-2 C_2^1 & 0 & C_2^2-2 C_2^1 & 0 & -C_2^1 & 0 & 0 & C_2^1 & 0 \\
 0 & 0 & 0 & 0 & 0 & 0 & 0 & 0 & 0 & 0 & 0 & 0 & 0 & 0 & 0 & 0 & 0 & 0 & -C_2^1 & 0 & 0 & 0 & 0 & 0 & -C_2^1 & C_2^2-2 C_2^1 & 0 & 0 \\
 0 & 0 & 0 & 0 & 0 & 0 & 0 & 0 & 0 & 0 & 0 & 0 & 0 & 0 & 0 & 0 & 0 & 0 & 0 & 0 & 0 & -C_2^1 & 0 & C_2^2-2 C_2^1 & 0 & 0 & -C_2^1 & 0 \\
 0 & 0 & 0 & 0 & 0 & 0 & 0 & 0 & 0 & 0 & 0 & 0 & 0 & 0 & 0 & 0 & 0 & 0 & 0 & 0 & 0 & C_1^2 & 0 & C_1^2 & C_1^2 & C_1^2 & C_1^1+C_1^2 & C_1^2 \\
 0 & 0 & 0 & 0 & 0 & 0 & 0 & 0 & 0 & 0 & 0 & 0 & 0 & 0 & 0 & 0 & 0 & 0 & 0 & 0 & 0 & 0 & 1 & 0 & 0 & 0 & -1 & 0 \\
 0 & 0 & 0 & 0 & 0 & 0 & 0 & 0 & 0 & 0 & 0 & 0 & 0 & 0 & 0 & 0 & 0 & 0 & 0 & 0 & 0 & 0 & 0 & 0 & C_2^2-2 C_2^1 & -C_2^1 & -C_2^1 & 0 \\\hline 
 1 & 1 & 1 & 1 & 1 & 1 & 1 & 1 & 1 & 1 & 1 & 1 & 1 & 1 & 1 & 1 & 1 & 1 & 1 & 1 & 1 & 1 & 1 & 1 & 1 & 1 & 1 & 1 \\
\end{array}\right].$}
\end{equation*}
Note that in the presentation $\mathcal{\widetilde{M}}$ is  partitioned. Let $\mathcal{\widetilde{M}}_{2,2}$ and $\mathcal{\widetilde{M}}_{4,4}$  denote the $(2,2)$ and $(4,4)$ partitions, respectively, of $\mathcal{\widetilde{M}}$ counting from left-to-right and top-to-bottom.
The matrix $\widetilde{\mathcal{M}}$ has full rank if and only if $det(\widetilde{\mathcal{M}}_{2,2})\neq 0$ and $det(\widetilde{\mathcal{M}}_{4,4})\neq 0$. We have 
\begin{eqnarray*}
det(\widetilde{\mathcal{M}}_{2,2}) & = & C_1^2 C_2^1\left(C_2^1 - C_2^2\right)\left(3 C_2^1 - C_2^2\right),\\
det(\widetilde{\mathcal{M}}_{4,4}) & = & \left(C_1^2\right)^2 C_2^1 \left(C_2^1 - C_2^2\right)^2 \left(2 C_2^1 - C_2^2\right) \left(3 C_2^1 - C_2^2\right).
\end{eqnarray*}
We know that $C_1^2\neq 0$, $C_2^1\neq 0$ and $C_2^1 - C_2^2\neq 0$. So $\widetilde{\mathcal{M}}$ has full rank if and only if $3 C_2^1 - C_2^2\neq 0$ and $2C_2^1 - C_2^2\neq 0$, where $\Sigma_I^4(\xs)=x_{11}^*+x_{12}^*+x_{13}^*=x_1^*+2x_2^*$, 
\begin{equation*}
3 C_2^1 - C_2^2  =  \frac{\sigma(\Sigma_I^4(\xs))}{(x_2^*)^2}-\frac{2\sigma(x_2^*)}{(\Sigma_I^4(\xs))^2}=\frac{\Sigma_I^4(\xs)(1-\Sigma_I^4(\xs))-2x_2^*(1-x_2^*)}{(x_2^*)^2(\Sigma_I^4(\xs))^2},
\end{equation*}
\begin{equation*}
2 C_2^1 - C_2^2  =  \frac{\sigma(\Sigma_I^4(\xs))}{(x_2^*)^2}-\frac{\sigma(x_2^*)}{(\Sigma_I^4(\xs))^2}=\frac{\Sigma_I^4(\xs)(1-\Sigma_I^4(\xs))-x_2^*(1-x_2^*)}{(x_2^*)^2(\Sigma_I^4(\xs))^2}.
\end{equation*}
Assume on the contrary that $3 C_2^1 - C_2^2=0$. We simplify the previous equality and get
\begin{equation}\label{eqn:3:12}
(x_1^*+2x_2^*)(1-x_1^*-2x_2^*)-2x_2^*(1-x_2^*)=0\ \ \tnr{or}\ \ x_2^*=-x_1^*+\sqrt{\frac{x_1^*+(x_1^*)^2}{2}}
\end{equation}
as $x_2^*>0$.  Since $\xs\in\Delta^{27}$, we have $8(x_1^*+2x_2^*)+4x_7^*=1$. This implies $0<x_1^*<\Sigma_I^4(\xs)=x_1^*+2x_2^*<\tfrac{1}{8}$. By (\ref{eqn:3:12}), we have $x_2^*<x_1^*$ if and only if $x_1^*>\tfrac{1}{7}$. Using the equality $f_2(\xs)=f_3(\xs)$ and the formulas of $f_1(\xs)$, $f_2(\xs)$ and $f_3(\xs)$ in (\ref{3:1}), (\ref{3:2}) and (\ref{3:3}), we find that $\sigma(x_2^*)=3\sigma(\Sigma_I^4(\xs))\sigma(x_1^*)$, where $\sigma(\Sigma_I^4(\xs))>1$. So we deduce that $x_2^*<x_1^*$. 
This is a contradiction. 

Next assume that $2 C_2^1 - C_2^2=0$. Then we get  $(x_1^*+2x_2^*)(1-x_1^*-2x_2^*)-x_2^*(1-x_2^*)=0$. This gives 
\begin{equation*}\label{eqn:3:13}
x_2^*=\frac{1-x_1^*}{3}\quad\tnr{or}\quad x_2^*=-x_1^*.
\end{equation*}
Since $x_2^*>0$, we obtain $x_1^*+3x_2^*=1$ or $7x_1^*+13x_2^*+4x_7^*=0$, a contradiction. This shows that $M$ has full rank.

By  \fullref{flipside}, there exists a direction $\vec v_3\in T_{\xs}\Delta^{27}$ such that values of $f_l$ for $l\in I-\{7,14,21,28\}$ decrease along a line segment in the direction of $\vec v_3$. Values of $f_l$ for $l\in\{7,14,21,28\}$ are smaller than $\alpha_*$ for a short distance along $\vec v_3$. As a result there exists a point $\tb{w}\in\Delta^{27}$ such that  $f_l(\tb{w})<\alpha_*$ for every $l\in I=\{1,2,\dots,28\}$, a contradiction. Therefore we obtain that $f_7(\xs)=\alpha_*$. This concludes the proof.
\end{proof}

\fullref{prop:3:1} and \fullref{prop:3:2} establish the properties of $F$ given in (\ref{b}) in the introduction. Once these properties are verified, the computation of $\alpha_*$, and consequently the infimums of the maximum of the displacement functions in $\mathcal{F}$ and $\mathcal{G}$ on $\Delta^{27}$, is straightforward. In other words, we have the statements below:

\begin{theorem}\label{thm:3:1}
Let $F\co\Delta^{27}\to\mathbb{R}$ be defined by $\tb{x}\to\max\{f(\tb{x})\co f\in\mathcal{F}\}$, where $\mathcal{F}$ is the set of functions listed in \fullref{dispfunc}. Then $\inf_{\tb{x}\in\Delta^{27}}F(\tb{x})=\alpha_*=24.8692...$ the unique real root of the polynomial
$21 x^4 - 496 x^3 - 654 x^2 + 24 x + 81$ greater than $9$.
\end{theorem}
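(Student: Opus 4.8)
The plan is to treat this theorem as the capstone of the structural results already in place and to reduce the computation of $\alpha_*=\min_{\tb{x}\in\Delta^{27}}F(\tb{x})$ to solving a small explicit algebraic system. By \fullref{lemtwo} the minimum is attained at a point $\xs$ with $\alpha_*\in[1,\alpha]$; by \fullref{prop:3:1} this $\xs$ is unique; by \fullref{lem:3:3} its coordinates collapse into four values, namely $a$ on the indices $\{1,5,9,13,15,19,23,27\}$, $b$ on $\{2,6,8,12,16,20,22,26\}$, $c$ on $\{3,4,10,11,17,18,24,25\}$, and $d$ on $\{7,14,21,28\}$; and by \fullref{prop:3:2} every displacement function agrees with $\alpha_*$ at $\xs$, so that the four representative identities (\ref{3:1})--(\ref{3:4}) all hold with equality, subject to the simplex constraint $8(a+b+c)+4d=1$.

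First I would use the constraint to linearise the $\sigma$-factors built from sums. Writing $s=a+b+c$, the constraint $8s+4d=1$ gives $\Sigma_J^1(\xs)=2s+d=\tfrac14$ and $\Sigma_1^J(\xs)=1-\Sigma_J^1(\xs)=\tfrac34$, whence $\sigma(\Sigma_J^1(\xs))=3$ and $\sigma(\Sigma_1^J(\xs))=\tfrac13$. Feeding these constants into (\ref{3:1}) and (\ref{3:4}) collapses them to $3\,\sigma(a)=\alpha_*$ and $\tfrac13\,\sigma(d)=\alpha_*$, so that
\begin{equation*}
a=\frac{3}{3+\alpha_*},\qquad d=\frac{1}{1+3\alpha_*}.
\end{equation*}
Since the first factor of both (\ref{3:2}) and (\ref{3:3}) is the common expression $\sigma(1-s)$, their equality forces $\sigma(b)=\sigma(c)$ and hence $b=c$. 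The constraint then pins down $b$ in terms of $\alpha_*$ alone through $2b=s-a=\tfrac{1-4d}{8}-a$, and a short computation gives
\begin{equation*}
2b=\frac{3\left(\alpha_*^{2}-22\alpha_*-11\right)}{8\,(1+3\alpha_*)(3+\alpha_*)}.
\end{equation*}

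Next I would substitute the closed forms for $a$, $d$, and $b$ into the single surviving identity (\ref{3:2}), written as $\tfrac{s}{1-s}\cdot\tfrac{1-b}{b}=\alpha_*$. Clearing denominators turns this into one polynomial equation in the lone unknown $\alpha_*$; carrying out the routine elimination, every solution must satisfy
\begin{equation*}
21\alpha_*^{4}-496\alpha_*^{3}-654\alpha_*^{2}+24\alpha_*+81=0.
\end{equation*}
It remains only to isolate the correct root. The coordinate $b$ must be strictly positive, and since $(1+3\alpha_*)(3+\alpha_*)>0$ the displayed formula for $2b$ forces $\alpha_*^2-22\alpha_*-11>0$, i.e.\ $\alpha_*>11+\sqrt{132}>9$. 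As $\alpha=24.8692\dots$ is the unique real root of the quartic exceeding $9$ (consistent with the bound $\alpha_*\le\alpha$ from \fullref{lemtwo}), this yields $\alpha_*=\alpha$, and therefore $\inf_{\tb{x}\in\Delta^{27}}F(\tb{x})=\alpha_*=\alpha$.

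The genuine difficulty has been front-loaded into \fullref{prop:3:1}, \fullref{lem:3:3}, and \fullref{prop:3:2}; within this proof the main obstacle is careful bookkeeping of the reduction—checking that (\ref{3:1})--(\ref{3:4}) are precisely the relations that survive the symmetry of \fullref{lem:3:3}, that the sum-type $\sigma$-factors simplify to the constants $3$, $\tfrac13$, and $\sigma(1-s)$ as claimed—followed by the honest polynomial elimination producing the quartic and the positivity argument that singles out $\alpha$ among its roots.
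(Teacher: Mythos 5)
Your proposal is correct and follows essentially the same route as the paper: both invoke \fullref{lemtwo}, \fullref{prop:3:1}, \fullref{lem:3:3} and \fullref{prop:3:2} to reduce to the system (\ref{3:1})--(\ref{3:4}) with the simplex constraint, extract $x_7^*=1/(1+3\alpha_*)$ and $x_1^*=3/(3+\alpha_*)$, obtain $x_2^*=x_3^*=3(\alpha_*-1)/(21\alpha_*^2+14\alpha_*-3)$, and eliminate to the quartic $21x^4-496x^3-654x^2+24x+81$. The only cosmetic differences are the order of elimination (you solve the constraint for $b$ and substitute into (\ref{3:2}), while the paper solves (\ref{3:2}) for $x_2^*$ and substitutes into the constraint) and your alternative root isolation via positivity of $b$, which supplements the paper's appeal to $\alpha_*\in[1,\alpha]$ from \fullref{lemtwo}; both are sound.
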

\begin{proof}
Since $\xs\in\Delta^{27}$, we have $8x_1^*+8x_2^*+8x_3^*+4x_7^*=1$ by \fullref{lem:3:3}. We plug $x_1^*+x_2^*+x_3^*=\tfrac{1}{8}-x_7^*/2$ into $f_7(\xs)=\alpha_*$ in (\ref{3:4}). Then we find $x_7^*=1/(1+3\alpha_*)$. Using $x_7^*$, we obtain from $f_1(\xs)=\alpha_*$ in (\ref{3:1}) that $x_1^*=3/(3+\alpha_*)$. Because we have $f_2(\xs)=f_3(\xs)$ by \fullref{prop:3:2}, using the formulas in (\ref{3:2}) and (\ref{3:3}) we find 
\begin{equation*}
x_2^*=x_3^*=\frac{3 (\alpha_* -1)}{21 \alpha_*^2+14 \alpha_* -3}.
\end{equation*}
When we plug all these values into the equation $2x_1^*+2x_2^*+2x_3^*+x_7^*=\tfrac{1}{4}$ we see that $\alpha_*$ satisfies the equation $21 x^4 - 496 x^3 - 654 x^2 + 24 x + 81=0$ which has the roots
$$\alpha_1=-1.1835...,\quad\alpha_2=-0.3968...,\quad \alpha_3=0.3302...,\quad \alpha_4=24.8692....$$  
The conclusion of the theorem follows from \fullref{lemtwo}.
\end{proof}

\begin{theorem}\label{thm:3:2}
Let $G\co\Delta^{27}\to\mathbb{R}$ be defined by $\tb{x}\to\max\{f(\tb{x})\co f\in\mathcal{G}\}$, where $\mathcal{G}$ is the set of functions listed in \fullref{dispfunc}. Then $\inf_{\tb{x}\in\Delta^{27}}G(\tb{x})=24.8692...$.
\end{theorem}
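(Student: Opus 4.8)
The plan is to establish the equality by two opposite inequalities, reusing the explicit minimiser of $F$ produced in \fullref{thm:3:1}. Since $\mathcal{F}\subseteq\mathcal{G}$, we have $F(\tb{x})\leq G(\tb{x})$ for every $\tb{x}\in\Delta^{27}$, so $\inf_{\tb{x}\in\Delta^{27}}G(\tb{x})\geq\inf_{\tb{x}\in\Delta^{27}}F(\tb{x})=\alpha_*$ by \fullref{thm:3:1}. For the reverse inequality it suffices to exhibit one point at which $G$ equals $\alpha_*$, and the natural candidate is the unique minimiser $\xs$ of $F$. By \fullref{lem:3:3} the entries of $\xs$ fall into the four symmetry classes $\{1,5,9,13,15,19,23,27\}$, $\{2,6,8,12,16,20,22,26\}$, $\{3,4,10,11,17,18,24,25\}$ and $\{7,14,21,28\}$, on which $\xs$ takes the values $a=3/(3+\alpha_*)$, $b$, $b$ and $d=1/(1+3\alpha_*)$, where $b=3(\alpha_*-1)/(21\alpha_*^2+14\alpha_*-3)$. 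By \fullref{prop:3:2} the point $\xs$ lies in $\Delta_{27}$, so $f_l(\xs)=\alpha_*$ for every $l\in I=\{1,\dots,28\}$.

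The key observation is that every function of $\mathcal{G}-\mathcal{F}$ is strictly dominated at $\xs$ by the member of $\mathcal{F}$ sharing its second factor. Each displacement function has the shape $\sigma(S)\,\sigma(x_l)$, where $S$ is one of the sums in (\ref{sigma}). Given $f\in\mathcal{G}-\mathcal{F}$ with second factor $\sigma(x_l)$, I would pair it with the unique $f_l\in\mathcal{F}$ carrying the same factor $\sigma(x_l)$, and compare the arguments of the two first factors at $\xs$. Because $\sigma$ is strictly decreasing on $(0,1)$, a strict inequality $S(\xs)>S'(\xs)$ between the first-factor arguments $S$ of $f$ and $S'$ of $f_l$ immediately yields $f(\xs)<f_l(\xs)=\alpha_*$.

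The comparison reduces entirely to the behaviour of the sums of (\ref{sigma}) at $\xs$. By the symmetry of $\xs$ one has $\Sigma_J^i(\xs)=\tfrac14$ for each $i$, hence $\Sigma_i^J(\xs)=\tfrac34$, while $\Sigma_I^j(\xs)=1-(a+2b)$ and $\Sigma^n(\xs)=1-x_n^*\geq 1-a$ for each $n$; numerically $\tfrac14<\tfrac34<1-(a+2b)<1-a$. For every index the function $f_l\in\mathcal{F}$ carries the \emph{smallest} admissible first-factor argument: the eight functions of $\mathcal{F}'$ use $\Sigma_J^i=\tfrac14$, the functions $f_7,f_{14},f_{21},f_{28}$ use $\Sigma_i^J=\tfrac34$, and the remaining $f_l$ use $\Sigma_I^j=1-(a+2b)$, whereas each matching member of $\mathcal{G}-\mathcal{F}$ carries either a larger complement-type sum $\Sigma_I^j$ or a one-coordinate deletion $\Sigma^n=1-x_n^*$. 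Running through the three families $\{g_i\}$, $\{h_j\}$ and $\{u_k\}$ and verifying this strict domination index by index (for instance $g_1$ against $f_1$ via $\tfrac14<1-(a+2b)$, or $u_7$ against $f_7$ via $\tfrac34<1-b$) is the only laborious part of the argument; in every instance it collapses to the elementary chain displayed above. The main obstacle is thus purely organisational: ensuring the pairing is exhaustive over the sixteen $g$'s, the twelve $h$'s and $u$'s, and that no function of $\mathcal{G}-\mathcal{F}$ inadvertently matches the minimal argument of its partner.

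Having verified $f(\xs)<\alpha_*$ for every $f\in\mathcal{G}-\mathcal{F}$ while $f_l(\xs)=\alpha_*$ for every $f_l\in\mathcal{F}$, I conclude $G(\xs)=\max\{f(\xs):f\in\mathcal{G}\}=\alpha_*$. Therefore $\inf_{\tb{x}\in\Delta^{27}}G(\tb{x})\leq G(\xs)=\alpha_*$, which together with the lower bound of the first paragraph gives $\inf_{\tb{x}\in\Delta^{27}}G(\tb{x})=\alpha_*=24.8692\dots$, completing the proof.
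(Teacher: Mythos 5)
Your proposal is correct and takes essentially the same route as the paper: the lower bound comes from $\mathcal{F}\subset\mathcal{G}$ together with \fullref{thm:3:1}, and the upper bound from checking that every function in $\mathcal{G}-\mathcal{F}$ stays strictly below $\alpha_*$ at the minimiser $\xs$, so that $G(\xs)=F(\xs)=\alpha_*$. The only difference is in the verification step, where the paper simply evaluates $g_l(\xs)$, $h_l(\xs)$ and $u_l(\xs)$ numerically ($2.4822...$, $1.1131...$, $0.4028...$, $0.1111...$), while you obtain the same strict inequalities by pairing each such function with the $f_l\in\mathcal{F}$ sharing its second factor $\sigma(x_l)$ and comparing first-factor arguments via the monotonicity of $\sigma$ and the chain $\tfrac14<\tfrac34<1-(a+2b)<1-x_n^*$ --- a tidier, numerics-free check of the same step.
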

\begin{proof}
Since $\mathcal{F}\subset\mathcal{G}$, we have $G(\tb{x})\geq F(\tb{x})$ for every $\tb{x}\in\Delta^{27}$. Note that we obtain the coordinates of $\tb{x}^*$ as
$$x_1^*= 0.1076...,\quad x_2^*=x_3^*= 0.0053...,\quad x_7^*= 0.0132...$$ by \fullref{thm:3:1}. Then for the indices $l\in\{3,4,10,11,17,18,24,25\}$ we find that 
$g_l(\xs)=2.4822...$. For the indices $l\in\{1,5,9,13,15,17,19,23,27\}$ we have ${g_l(\xs)= 1.1131...}$. Similarly we compute that  $h_l(\xs)=u_l(\xs)=0.4028...$ for $l\in\{7,14,21,28\}$ and  $h_l(\xs)=0.1111...$ for $l\in\{1,5,9,13,15,19,23,27\}$.  Because $G(\xs)=F(\xs)$, we are done.  
\end{proof}


\section{Proof of the Main Theorem}\label{sec4}

To prove the main theorem of this paper we shall require two preliminary statements. The first is the following:

\begin{lemma}\label{duetoref}
Let $\xi$ and $\eta$ be two non-commuting loxodromic isometries of $\hyp$. If $z_2$ is the mid-point of the shortest geodesic segment connecting the axes of $\xi$ and $\eta^{-1}\xi\eta$, then $d_{\xi}z_2<d_{\eta\xi\eta^{-1}}z_2$. 
\end{lemma}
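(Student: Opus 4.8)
The plan is to reduce the displacement inequality to a comparison of the distances from $z_2$ to the two relevant axes, and then to prove that distance comparison by a short triangle-inequality argument that exploits the fixed-point-freeness of the loxodromic $\eta$. Write $\mathcal{A}$, $\mathcal{B}$, $\mathcal{C}$ for the axes of $\xi$, $\eta\xi\eta^{-1}$, $\eta^{-1}\xi\eta$ respectively, so that $\mathcal{B}=\eta(\mathcal{A})$ and $\mathcal{C}=\eta^{-1}(\mathcal{A})$. Since $\eta\xi\eta^{-1}$ is conjugate to $\xi$, it has the same translation length $T_\xi$ and rotation angle $\theta$. By the two displacement formulas recorded in the introduction, both $\sinh^2\tfrac12 d_\xi z_2$ and $\sinh^2\tfrac12 d_{\eta\xi\eta^{-1}}z_2$ are obtained by evaluating the single function $f(t)=\sinh^2(\tfrac12 T_\xi)\cosh^2 t+\sin^2\theta\,\sinh^2 t$ at $t=d_{z_2}\mathcal{A}$ and $t=d_{z_2}\mathcal{B}$, respectively. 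Because $\xi$ is loxodromic we have $T_\xi>0$, hence $\sinh^2(\tfrac12 T_\xi)>0$ and $f$ is strictly increasing on $[0,\infty)$. Thus the lemma reduces to the purely metric claim $d_{z_2}\mathcal{A}<d_{z_2}\mathcal{B}$.

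Next I would set up the symmetry. Let $[P,Q]$ be the common perpendicular of $\mathcal{A}$ and $\mathcal{C}$, with $P\in\mathcal{A}$, $Q\in\mathcal{C}$ and midpoint $z_2$, and set $d=\mathrm{dist}(\mathcal{A},\mathcal{C})>0$ (positivity is forced by $\mathcal{A}\neq\mathcal{C}$, which holds because otherwise $\eta$ would preserve $\mathcal{A}$, making $\xi$ and $\eta$ share an axis and hence commute). Since $z_2$ lies on a geodesic meeting $\mathcal{A}$ orthogonally at $P$, the nearest point of $\mathcal{A}$ to $z_2$ is $P$, so $d_{z_2}\mathcal{A}=d(z_2,P)=d/2$. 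Applying $\eta$ to the pair $(\mathcal{A},\mathcal{C})=(\mathcal{A},\eta^{-1}\mathcal{A})$ produces the pair $(\mathcal{B},\mathcal{A})$ and carries $[P,Q]$ to the common perpendicular of $\mathcal{A}$ and $\mathcal{B}$; consequently $\mathrm{dist}(\mathcal{A},\mathcal{B})=\mathrm{dist}(\mathcal{A},\mathcal{C})=d$ and $\eta(z_2)=z_1$, the midpoint of the common perpendicular of $\mathcal{A}$ and $\mathcal{B}$.

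The heart of the argument is then the strict inequality $d_{z_2}\mathcal{B}>d/2$. Let $S$ be the nearest point of $\mathcal{B}$ to $z_2$. The triangle inequality gives $d(P,S)\le d(P,z_2)+d(z_2,S)=\tfrac{d}{2}+d_{z_2}\mathcal{B}$, while $d(P,S)\ge \mathrm{dist}(\mathcal{A},\mathcal{B})=d$; together these force $d_{z_2}\mathcal{B}\ge d/2$. The point is that equality is impossible: if $d_{z_2}\mathcal{B}=d/2$, then both inequalities are equalities, so $d(P,S)=d=\mathrm{dist}(\mathcal{A},\mathcal{B})$ with $P\in\mathcal{A}$, $S\in\mathcal{B}$, which forces $[P,S]$ to be the unique common perpendicular of $\mathcal{A}$ and $\mathcal{B}$; and the triangle-inequality equality forces $z_2\in[P,S]$ at distance $d/2$ from $P$, i.e. $z_2$ is the midpoint $z_1$ of that perpendicular. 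But then $\eta(z_2)=z_1=z_2$, contradicting the fact that the loxodromic isometry $\eta$ has no fixed point in $\hyp$. Hence $d_{z_2}\mathcal{A}=d/2<d_{z_2}\mathcal{B}$, and the strict monotonicity of $f$ yields $d_\xi z_2<d_{\eta\xi\eta^{-1}}z_2$.

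The step I expect to require the most care is the strictness: the naive triangle (or $1$-Lipschitz) estimate only delivers $d_{z_2}\mathcal{B}\ge d/2$, so everything hinges on the equality analysis, where uniqueness of the common perpendicular of two disjoint, non-asymptotic geodesics converts equality into $\eta(z_2)=z_2$ and then into a contradiction with loxodromicity. In writing this up I would record explicitly the two standard facts used—that the existence of the connecting segments defining $z_1,z_2$ guarantees the axes are disjoint and non-asymptotic (so the common perpendiculars exist and are unique), and that $\mathcal{A}$ is not the axis of $\eta$ (guaranteed by non-commutativity)—both of which hold without appeal to the purely loxodromic or free hypotheses.
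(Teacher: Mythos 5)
Your proof is correct and is essentially the paper's own argument in unpacked form: the paper phrases it via $\lambda$-displacement cylinders, observing that for $\lambda=d_{\xi}z_2$ the cylinders $Z_{\lambda}(\xi)$ and $Z_{\lambda}(\eta^{-1}\xi\eta)$ meet only at the point $z_2$, that $\eta$ carries this configuration to the pair $\left(Z_{\lambda}(\xi),Z_{\lambda}(\eta\xi\eta^{-1})\right)$ with unique common point $\eta\cdot z_2$, and that $\eta\cdot z_2\neq z_2$ --- which corresponds exactly to your reduction to distances from $z_2$ to the axes via the monotone displacement profile, your identity $\eta(z_2)=z_1$, and your appeal to the fixed-point-freeness of the loxodromic $\eta$. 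Your triangle-inequality equality analysis, resting on uniqueness of the common perpendicular of two disjoint non-asymptotic geodesics, is precisely a proof of the tangency-uniqueness claim that the paper asserts without detail, so your write-up is, if anything, the more complete of the two.
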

\begin{proof}
Let us denote the $\lambda$-displacement cylinder for a loxodromic isometry $\gamma$ by $Z_{\lambda}(\gamma)$. Let $\lambda=d_{\xi}z_2$. The point $z_2\in Z_{\lambda}(\xi)$ is the only point in the set $Z_{\lambda}(\xi)\cap Z_{\lambda}(\eta^{-1}\xi\eta)$. Because $\eta\cdot z_2\neq z_2$ and $\eta\cdot z_2$ is the only element in $Z_{\lambda}(\eta\xi\eta^{-1})\cap Z_{\lambda}(\xi)$, the point $z_2$ cannot be in $Z_{\lambda}(\eta\xi\eta^{-1})$. Hence the conclusion follows.
\end{proof}

The second statement below is proved using arguments analogous to the ones introduced in \cite[Theorem 9.1]{CSParadox}, \cite[Theorem 5.1]{Y1} and \cite[Theorem 4.1]{Y2}. Therefore we shall not provide a detailed proof. 
\begin{theorem}\label{thm:4:1}
Let $\xi$ and $\eta$ be two non--commuting isometries of $\hyp$. Suppose that $\Gamma=\langle\xi,\eta\rangle$ is a purely loxodromic free Kleinian group. Let $\Phi_1  =  \{\xi,\eta,\eta^{-1},\xi^{-1}\}$ and 
$$\Gamma_{\jrg}=\Phi_1\cup\{\xi\eta\xi^{-1},\xi^{-1}\eta\xi,\eta\xi\eta^{-1},\eta^{-1}\xi\eta,\xi\eta^{-1}\xi^{-1},\xi^{-1}\eta^{-1}\xi,\eta\xi^{-1}\eta^{-1},\eta^{-1}\xi^{-1}\eta\}.$$ Then we have $\max\nolimits_{\gamma\in\Gamma_{\jrg}}\left\{d_{\gamma}z\right\}\geq 1.6068...$ for any $z\in\hyp$. 
\end{theorem}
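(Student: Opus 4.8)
The plan is to follow the two-case division announced in the introduction, treating first the geometrically infinite case directly with the measure-theoretic machinery and then reducing the geometrically finite case to it by a properness argument on the character variety.

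In the geometrically infinite case I would fix $z\in\hyp$ and invoke \fullref{thm:2:1} to produce the family of Borel measures $\{\nu_\psi\}_{\psi\in\Psi}$ satisfying $(i)$--$(iii)$. The first task is to verify that $0<\nu_{s(\gamma)}(S_\infty)<1$ for every group-theoretical relation $(\gamma,s(\gamma),S(\gamma))$ listed in \fullref{lem:2:1}; this is exactly the check performed at the start of the proof of \fullref{dispfunc}, using that most $\psi\in\Psi$ have their inverse in $\Psi$ and handling the eight exceptional squares through the relations in \fullref{Table2}. With this in hand, \fullref{lem1.2} applies to each relation upon setting $\nu=\nu_{s(\gamma)}$, $a=\nu_{s(\gamma)}(S_\infty)$ and $b=\int_{S_\infty}(\lambda_{\gamma,z})^2 d\nu_{s(\gamma)}$, where part $(iii)$ of \fullref{thm:2:1} guarantees $b=1-\sum_{\psi\in S(\gamma)}\nu_\psi(S_\infty)$. \fullref{dispfunc} then packages the resulting lower bounds into the displacement functions of $\mathcal{G}$. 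Writing $\tb{m}=(\nu_{\xi\eta^{-1}\xi^{-1}}(S_\infty),\dots,\nu_{\xi^{-2}}(S_\infty))\in\Delta^{27}$, I would obtain $\max_{\gamma\in\Gamma_{\jrg}}e^{2d_{\gamma}z}\geq G(\tb{m})$, and \fullref{thm:3:2} gives $G(\tb{m})\geq\inf_{\tb{x}\in\Delta^{27}}G(\tb{x})=24.8692\dots$. Taking $\tfrac{1}{2}\log$ of both sides yields $\max_{\gamma}d_{\gamma}z\geq\tfrac{1}{2}\log(24.8692\dots)=1.6068\dots$, which is the chain recorded in (\ref{inequality}).

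For the geometrically finite case I would argue by continuity and density. Fix $z$ and consider $f_z\co\mathfrak{X}\to\R$, $f_z(\xi,\eta)=\max_{\psi\in\Gamma_{\jrg}}\dpsi$, which is continuous and proper. By arguments parallel to \cite[Theorem 9.1]{CSParadox}, \cite[Theorem 5.1]{Y1} and \cite[Theorem 4.1]{Y2}, the restriction of $f_z$ to the open set $\mathfrak{GF}$ of free, geometrically finite, parabolic-free pairs attains its infimum on the boundary $\overline{\mathfrak{GF}}-\mathfrak{GF}$ rather than in $\mathfrak{GF}$ itself. Since the pairs $(\xi,\eta)$ generating free, geometrically infinite, parabolic-free groups are dense in $\overline{\mathfrak{GF}}-\mathfrak{GF}$ by \cite[Propositions 9.3 and 8.2]{CSParadox}, \cite[Main Theorem]{CSH} and \cite{CCHS}, and the bound $f_z\geq 1.6068\dots$ has just been established on that dense set, continuity of $f_z$ propagates the inequality to all of $\overline{\mathfrak{GF}}-\mathfrak{GF}$, hence to every geometrically finite $(\xi,\eta)$ as well.

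The main obstacle is not the computation, since that heavy lifting was already completed in \fullref{thm:3:2}, but rather the structural reduction in the geometrically finite case: establishing that $f_z$ is proper and that its minimum over $\mathfrak{GF}$ is forced onto the boundary $\overline{\mathfrak{GF}}-\mathfrak{GF}$. This is precisely where the deformation-theoretic input is indispensable, combining properness of the displacement function on the relevant slice of the character variety with the density of geometrically infinite groups in the boundary of the geometrically finite ones, and it is the reason the argument is said to mirror \cite[Theorem 9.1]{CSParadox} and its analogues in \cite{Y1} and \cite{Y2}.
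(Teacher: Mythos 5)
Your proposal is correct and takes essentially the same route as the paper's proof: the geometrically infinite case via \fullref{thm:2:1}, \fullref{lem1.2}, \fullref{dispfunc} and \fullref{thm:3:2} combined in the inequality chain of (\ref{inequality}), and the geometrically finite case via properness of $f_z$ on the character variety, the forcing of its minimum onto $\overline{\mathfrak{GF}}-\mathfrak{GF}$, and the density there of free, geometrically infinite, parabolic-free pairs. The only ingredient the paper makes more explicit than your sketch is the geodesic-triangle argument (the acute versus non-acute cases handled by the one-step and two-step deformation processes) that places the minimising pair $(\xi_0,\eta_0)$ on the boundary, which you correctly identify as the deformation-theoretic input borrowed from \cite{CSParadox}, \cite{Y1} and \cite{Y2}.
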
 
\begin{proof}
Assume that $\Gamma=\langle\xi,\eta\rangle$ is geometrically infinite. The conclusion of the theorem follows from \fullref{dispfunc}, \fullref{thm:3:2} and the following inequality
\begin{equation*}\label{inequality2}
\max_{\gamma\in\Gamma_{\jrg}}\left\{d_{\gamma}z\right\} \geq  \tfrac{1}{2}\log G(\tb{m})
                         \geq  \tfrac{1}{2}\log\left(\inf_{\tb{x}\in\Delta^{27}} G(\tb{x})\right)=\tfrac{1}{2}\log 24.8692...=1.6068...,
\end{equation*}
where $\tb{m}=(\nu_{\xi\eta^{-1}\xi^{-1}}(S_{\infty}),\dots,\nu_{\xi^{-2}}(S_{\infty}))\in\Delta^{27}$.

Assume that $\Gamma=\langle\xi,\eta\rangle$ is geometrically finite.  Because $\Gamma=\langle\xi,\eta\rangle$ is torsion-free, each isometry $\gamma\in\Gamma_{\jrg}$ has infinite order. This implies that $\gamma\cdot z\neq z$ for every $z\in\hyp$. 
Since 
$\tr{dist}(z,\gamma_1\gamma_2\cdot z)=\tr{dist}(\gamma_1^{-1}\cdot z,\gamma_2\cdot z)$ and $\tr{dist}(z,\gamma_1\cdot z)=\tr{dist}(z,\gamma_1^{-1}\cdot z)$ for all $\gamma_1, \gamma_2\in\Gamma=\langle\xi,\eta\rangle$, we have 
\begin{equation*}
\begin{array}{cccc}
\textnormal{dist}(z,\xi\eta\xi^{-1}\cdot z)=\textnormal{dist}(\xi^{-1}\cdot z,\eta\xi^{-1}\cdot z)=\textnormal{dist}(\xi^{-1}\cdot z,\eta^{-1}\xi^{-1}\cdot z)=\textnormal{dist}(z,\xi\eta^{-1}\xi^{-1}\cdot z),\\
\textnormal{dist}(z,\xi^{-1}\eta\xi\cdot z)=\textnormal{dist}(\xi\cdot z,\eta\xi\cdot z)=\textnormal{dist}(\xi\cdot z,\eta^{-1}\xi\cdot z)=\textnormal{dist}(z,\xi^{-1}\eta^{-1}\xi\cdot z),\\
\textnormal{dist}(z,\eta\xi\eta^{-1}\cdot z)=\textnormal{dist}(\eta^{-1}\cdot z,\xi\eta^{-1}\cdot z)=\textnormal{dist}(\eta^{-1}\cdot z,\xi^{-1}\eta^{-1}\cdot z)=\textnormal{dist}(z,\eta\xi^{-1}\eta^{-1}\cdot z),\\
\textnormal{dist}(z,\eta^{-1}\xi\eta\cdot z)=\textnormal{dist}(\eta\cdot z,\xi\eta\cdot z)=\textnormal{dist}(\eta\cdot z,\xi^{-1}\eta\cdot z)=\textnormal{dist}(z,\eta^{-1}\xi^{-1}\eta\cdot z).
\end{array}
\end{equation*}
Therefore, all of the hyperbolic displacements under the isometries in $\Gamma_{\jrg}$ are realised by the geodesic line segments 
joining the points $\{z\}\cup\{\gamma\cdot z\co\gamma\in\Phi\}$, where  $\Phi=\{\xi,\eta^{-1},\eta,\xi^{-1}\}\cup\{\xi\eta^{-1},\xi\eta,\eta\xi,\eta\xi^{-1}\}.$
We enumerate the elements of $\Phi$ for some index set $I'\subset\mathbb{N}$ such that $P_0=z$ and $P_i=\gamma_i\cdot z$ for $i\in I'$ and $\gamma_i\in\Phi$.  Let $\Delta_{ij}=\triangle{P_iP_0P_j}$ represent the geodesic triangle with vertices $P_i$, $P_0$ and $P_j$ for $i,j\in I'$ and $i\neq j$. 

Let $\mathfrak{X}$ denote the character variety $PSL(2,\C)\times PSL(2,\C)\simeq\tnr{Isom}^+(\hyp)\times\tnr{Isom}^+(\hyp)$ and  $\mathfrak{GF}$ be the set $\{(\gamma,\beta)\in\mathfrak{X}\co\langle\gamma,\beta\rangle\textnormal{ is free, geometrically finite and without any parabolic}\}.$
For a fixed $z\in\hyp$ let us define the real-valued function $f_{z}\co\mathfrak{X}\to\R$ with the formula
\begin{displaymath}
f_{z}(\xi,\eta)=\max_{\psi\in\Gamma_{\jrg}}\{\dpsi\}.
\end{displaymath}
The function $f_{z}$ is continuous and proper. Therefore, it takes a minimum value at some point $(\xi_0,\eta_0)$ in $\overline{\mathfrak{GF}}$. 
The value $f_{z}(\xi_0,\eta_0)$ is the unique longest side length of one geodesic triangle $\Delta_{ij}$ for some $i,j\in I'$. Let us denote this geodesic triangle with $\Delta$ and their vertices by $\widetilde{P}_i$, $P_0$ and $\widetilde{P}_j$. There are two cases to consider: (1) $\Delta$ is acute or (2) $\Delta$ is not acute. 

Assume that (2) is the case. Then there is a one-step process analogous to the ones described in the 
\begin{figure}[H]
\begin{center}
\includegraphics[scale=.53]{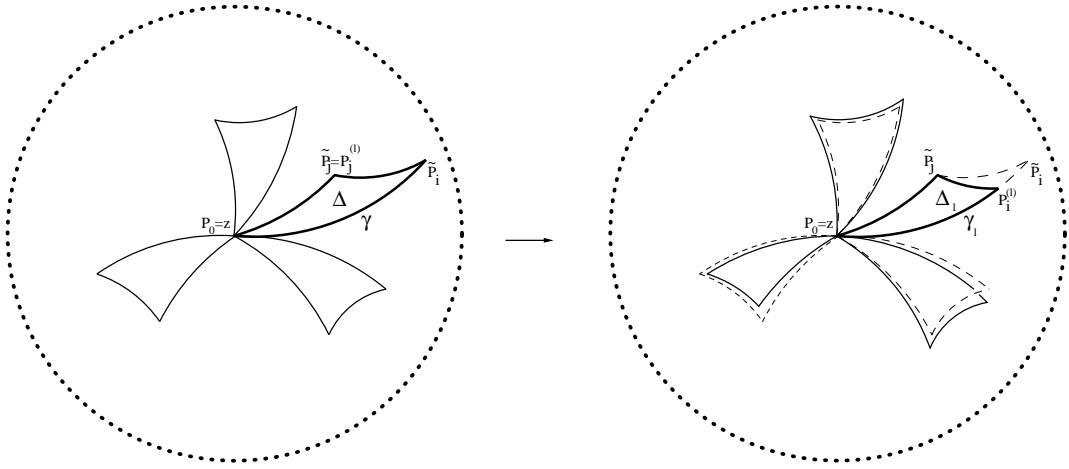}
\end{center}
\caption{Case (2): $\Delta$ is not acute.}
\end{figure}
\noindent proofs of \cite[Theorem 5.1]{Y1} and \cite[Theorem 4.1]{Y2}. This one-step process is illustrated in Figure 1 proving that $(\xi_0,\eta_0)\in\overline{\mathfrak{GF}}-\mathfrak{GF}$. 
\begin{figure}[H]
\begin{center}
\includegraphics[scale=.53]{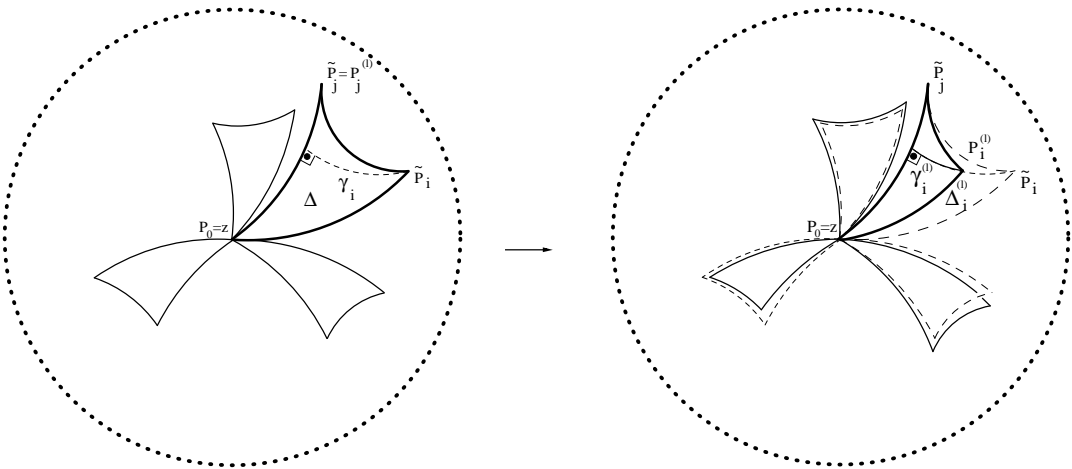}
\end{center}
\caption{Case (1): $\Delta$ is acute.}
\end{figure}
If (1) is the case, then there is a two-step process analogous to the ones described in the proofs of \cite[Theorem 5.1]{Y1} and \cite[Theorem 4.1]{Y2}. This two-step process is illustrated in Figures 2 and 3 proving again that $(\xi_0,\eta_0)\in\overline{\mathfrak{GF}}-\mathfrak{GF}$. 
\begin{figure}[H]
\begin{center}
\includegraphics[scale=.53]{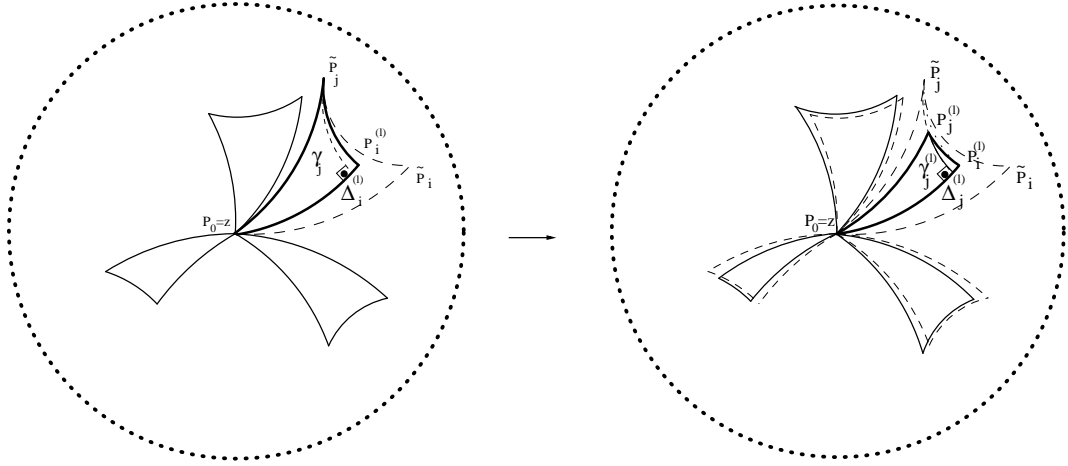}
\end{center}
\caption{Case (1): $\Delta$ is acute.}
\end{figure}

Since geometrically finite case reduces to geometrically infinite case by the facts that the set of $(\xi,\eta)$ such that $\langle\xi,\eta\rangle$ is free, geometrically infinite and without any parabolic is dense in $\overline{\mathfrak{GF}}-\mathfrak{GF}$ and every $(\xi,\eta)\in\mathfrak{X}$ with  $\langle\xi,\eta\rangle$ is free and without any parabolic is in $\overline{\mathfrak{GF}}$, the conclusion of the theorem follows when $\Gamma=\langle\xi,\eta\rangle$ is geometrically finite as well.  For the details of this crucial final step in the proof, readers may refer to \cite[Propositions 8.2 and 9.3]{CSParadox}, \cite[Main Theorem]{CSH} and \cite{CCHS}.
\end{proof}

Using \fullref{duetoref} and \fullref{thm:4:2} we can prove the following statement, the main result of this paper;

\begin{theorem}\label{thm:4:2}
Let $\xi$ and $\eta$ be two non--commuting isometries of $\hyp$. 
Suppose that $\Gamma=\langle\xi,\eta\rangle$ is a purely loxodromic free Kleinian group.
If $d_{\gamma}z_2<1.6068...$ for every $\gamma\in\Phi_2=\{\eta,\xi^{-1}\eta\xi,\xi\eta\xi^{-1}\}$ and $d_{\eta\xi\eta^{-1}}z_2\leq d_{\eta\xi\eta^{-1}}z_1$ for the mid--points $z_1$ and $z_2$ of the shortest geodesic segments joining the axes of $\xi$, $\eta\xi\eta^{-1}$ and $\eta^{-1}\xi\eta$, respectively, then we have
$
|\textnormal{trace}^2(\xi)-4|+|\textnormal{trace}(\xi\eta\xi^{-1}\eta^{-1})-2|\geq1.5937....
$
\end{theorem}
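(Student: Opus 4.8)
The plan is to reduce the desired trace inequality to a lower bound on the single displacement $d_\xi z_1$ furnished by \fullref{thm:4:1}, and then to feed that bound into a Beardon-type trace/displacement estimate. Write $\beta=|\textnormal{trace}^2(\xi)-4|$ and $\gamma=|\textnormal{trace}(\xi\eta\xi^{-1}\eta^{-1})-2|$. The first ingredient I would establish is the geometric inequality
$$\beta+\gamma\ \geq\ 2\sinh^2\left(\tfrac12 d_\xi z_1\right),$$
which is available precisely because $z_1$ is the midpoint of the common perpendicular of the axes $\mathcal{A}$ of $\xi$ and $\mathcal{B}$ of $\eta\xi\eta^{-1}$, so that $d_{z_1}\mathcal{A}=d_{z_1}\mathcal{B}$ and hence $d_\xi z_1=d_{\eta\xi\eta^{-1}}z_1$. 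Diagonalising $\xi$ and writing $\eta=\left(\begin{smallmatrix} a&b\\ c&d\end{smallmatrix}\right)$ yields the commutator identity $\textnormal{trace}(\xi\eta\xi^{-1}\eta^{-1})-2=-bc\,(\textnormal{trace}^2(\xi)-4)$, so that $\gamma=|bc|\,\beta$; a short computation also gives $\beta=2(\cosh T_\xi-\cos\theta)$. Combining these with the displacement formula $\sinh^2\tfrac12 d_\xi z_1=\sinh^2(\tfrac12 T_\xi)\cosh^2 d_{z_1}\mathcal{A}+\sin^2\theta\sinh^2 d_{z_1}\mathcal{A}$ quoted in the introduction reduces the inequality to elementary hyperbolic estimates in $T_\xi$, $\theta$ and $d_{z_1}\mathcal{A}$, which are exactly the ones appearing, with the opposite orientation, in the proof of \cite[Theorem 5.4.5]{B}.

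The second ingredient is the lower bound $d_\xi z_1\geq 1.6068\ldots=\tfrac12\log\alpha$, which I would extract by applying \fullref{thm:4:1} at the point $z_2$. First I would collapse the twelve displacements over $\Gamma_{\jrg}$ at $z_2$ using the identity $d_\gamma z_2=d_{\gamma^{-1}}z_2$ for the generators together with the four displacement identities displayed in the proof of \fullref{thm:4:1}, namely $d_{\xi\eta\xi^{-1}}z_2=d_{\xi\eta^{-1}\xi^{-1}}z_2$, $d_{\xi^{-1}\eta\xi}z_2=d_{\xi^{-1}\eta^{-1}\xi}z_2$, $d_{\eta\xi\eta^{-1}}z_2=d_{\eta\xi^{-1}\eta^{-1}}z_2$ and $d_{\eta^{-1}\xi\eta}z_2=d_{\eta^{-1}\xi^{-1}\eta}z_2$. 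This leaves six distinct values. Three of them, $d_\eta z_2$, $d_{\xi^{-1}\eta\xi}z_2$ and $d_{\xi\eta\xi^{-1}}z_2$, are strictly less than $1.6068\ldots$ by hypothesis. Since $z_2$ is the midpoint of the common perpendicular of the axes of $\xi$ and $\eta^{-1}\xi\eta$, the midpoint symmetry gives $d_\xi z_2=d_{\eta^{-1}\xi\eta}z_2$, and \fullref{duetoref} gives $d_\xi z_2<d_{\eta\xi\eta^{-1}}z_2$.

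Putting these together, the maximum of the displacements over $\Gamma_{\jrg}$ at $z_2$ equals $\max\{\,d_{\eta\xi\eta^{-1}}z_2,\ d_\eta z_2,\ d_{\xi^{-1}\eta\xi}z_2,\ d_{\xi\eta\xi^{-1}}z_2\,\}$, since the remaining value $d_\xi z_2=d_{\eta^{-1}\xi\eta}z_2$ is dominated by $d_{\eta\xi\eta^{-1}}z_2$. As \fullref{thm:4:1} forces this maximum to be at least $1.6068\ldots$ while the three hypothesised displacements are strictly below it, the bound must be realised by $d_{\eta\xi\eta^{-1}}z_2$, whence $d_{\eta\xi\eta^{-1}}z_2\geq 1.6068\ldots$. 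The hypothesis $d_{\eta\xi\eta^{-1}}z_2\leq d_{\eta\xi\eta^{-1}}z_1$ together with $d_{\eta\xi\eta^{-1}}z_1=d_\xi z_1$ then gives $d_\xi z_1\geq 1.6068\ldots=\tfrac12\log\alpha$. Finally, since $t\mapsto 2\sinh^2(\tfrac12 t)$ is increasing on $[0,\infty)$, the first ingredient yields $\beta+\gamma\geq 2\sinh^2(\tfrac12 d_\xi z_1)\geq 2\sinh^2(\tfrac14\log\alpha)=1.5937\ldots$, which is the assertion.

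I expect the main obstacle to be the first ingredient: reversing Beardon's chain of inequalities to obtain the clean lower bound $\beta+\gamma\geq 2\sinh^2(\tfrac12 d_\xi z_1)$ rather than the one-sided contradiction he derives. The delicate point is controlling the factor $|bc|$, equivalently the twist (imaginary part) of the complex distance between $\mathcal{A}$ and $\mathcal{B}$, uniformly in $T_\xi$, $\theta$ and $d_{z_1}\mathcal{A}$, so that the midpoint hypothesis $d_{z_1}\mathcal{A}=d_{z_1}\mathcal{B}$ is genuinely used and the estimate does not degrade when the axes are far apart. The displacement-symmetry bookkeeping of the second ingredient is, by contrast, routine once \fullref{duetoref} and the identities from \fullref{thm:4:1} are in hand.
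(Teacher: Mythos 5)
Your proposal is correct and follows essentially the same route as the paper: both rest on the reversed Beardon estimate $|\textnormal{trace}^2(\xi)-4|+|\textnormal{trace}(\xi\eta\xi^{-1}\eta^{-1})-2|\geq 2\sinh^2\left(\tfrac12 d_{\xi}z_1\right)$ (the paper's inequality (\ref{trace_ineq})), combined with \fullref{thm:4:1} applied at $z_2$, \fullref{duetoref}, and the midpoint symmetries $d_{\xi}z_1=d_{\eta\xi\eta^{-1}}z_1$ and $d_{\xi}z_2=d_{\eta^{-1}\xi\eta}z_2$. The only difference is organisational: you argue directly that the maximum over $\Gamma_{\jrg}$ at $z_2$ must be realised by $d_{\eta\xi\eta^{-1}}z_2$, whereas the paper runs the contrapositive, assuming the trace sum is below $1.5937...$ and deriving a contradiction with \fullref{thm:4:1}.
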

\begin{proof}
We shall mostly follow  the computations given in the proof of Theorem 5.4.5 in Beardon \cite[Section 5.4]{B}. Readers who are interested in further details should refer to this source. 

Considering conjugate elements, for $u=|u|e^{i\theta}$ and $ad-bc=1$ we can assume that
\begin{eqnarray*}
\xi=\left(\begin{array}{cc}
u & 0\\
0 & 1/u
\end{array}\right) & \tnr{and} & \eta=\left(\begin{array}{cc}
a & b\\
c & d
\end{array}\right).
\end{eqnarray*}
Let $\mathcal{A}$  and $T_{\xi}$ denote the axis and translation length of $\xi$, respectively.  Above $\theta$ denotes the angle of rotation of $\xi$ about its axis. Then we have $$|\textnormal{trace}^2(\xi)-4|+|\textnormal{trace}(\xi\eta\xi^{-1}\eta^{-1})-2|=|u-1/u|^2(1+|bc|),$$
where $\sinh^2(\tfrac{1}{2}T_{\xi})+\sin^2\theta=\tfrac{1}{4}|u-1/u|^2$ (see \cite[Equations (5.4.8) and (5.4.10)]{B}).
First we shall determine a lower bound for the term $1+|bc|$. 

By construction $\mathcal{A}$ is the geodesic with end--points $0$ and $\infty$ and $\mathcal{B}=\eta\mathcal{A}$ is the geodesic with end--points $\eta0$ and $\eta\infty$. Since $\Gamma=\langle\xi,\eta\rangle$  is non-elementary, $\mathcal{A}$ and $\mathcal{B}$ don't have a common end--point. This implies that $bc\neq 0$. So the equation 
\begin{equation}\label{Eqn:4:1}
bc=\frac{(1-w)^2}{4w}
\end{equation} 
obtained by the cross--ratios $[1,-1,w,-w]=[0,\infty,b/d,a/c]$ has two solutions. Let $w=\tnr{exp}\ {2(x_0+iy_0)}$ be one of the solutions. We may assume that $|w|\geq 1$. 

Plugging $w=\tnr{exp}\ {2(x_0+iy_0)}$ in (\ref{Eqn:4:1}) we obtain $bc=\sinh^2(x_0+iy_0)$. Then we derive  
\begin{equation*}
\begin{multlined} 
4|bc|^2=|\cosh 2(x_0+iy_0)-1|^2=(\cosh 2x_0 -\cos 2y_0)^2 \\
\shoveleft[5cm]{\geq (\cosh 2x_0 -1)^2 =(\cosh^2x_0+\sinh^2x_0-1)^2\geq(\cosh^2x_0-1)^2,}
\end{multlined}
\end{equation*}
which gives that $2|bc|\geq\cosh^2x_0-1=\sinh^2x_0$. This implies the following inequality
\begin{equation}\label{eqn:4}
1+|bc| \geq\tfrac{1}{2} \sinh^2x_0+1
          = \tfrac{1}{2} \cosh^2x_0+\tfrac{1}{2}
                 \geq  \tfrac{1}{2} \cosh^2x_0.
\end{equation}
Let $d_z\mathcal{A}$ denote the shortest distance between $z$ and $\mathcal{A}$.  Since $\xi$ and $\eta\xi\eta^{-1}$ have the same trace squared, the same translation length and consequently the same value of $\sin^2\theta$, for every $z\in\hyp$ we obtain
\begin{equation}\label{eqn:1}
\sinh^2\tfrac{1}{2}d_{\xi}z  =  \sinh^2(\tfrac{1}{2}T_{\xi})\cosh^2d_z\mathcal{A}+\sin^2\theta\sinh^2d_z\mathcal{A}\\
                                                \leq \left( \sinh^2(\tfrac{1}{2}T_{\xi})+\sin^2\theta\right)\cosh^2d_z\mathcal{A},
\end{equation}
\begin{equation}\label{eqn:2}
\sinh^2\tfrac{1}{2}d_{\eta\xi\eta^{-1}}z  =  \sinh^2(\tfrac{1}{2}T_{\xi})\cosh^2d_z\mathcal{B}+\sin^2\theta\sinh^2d_z\mathcal{B}\\
                                                \leq \left( \sinh^2(\tfrac{1}{2}T_{\xi})+\sin^2\theta\right)\cosh^2d_z\mathcal{B}.
\end{equation}
Then by using the inequalities in (\ref{eqn:1}) and (\ref{eqn:2}) and the fact that $\sinh^2x$ and $\cosh^2x$ are increasing for $x>0$, for every $z\in\hyp$ we derive that 
\begin{equation}\label{eqn:3}
\sinh^2\tfrac{1}{2}\max\{d_{\xi}z,d_{\eta\xi\eta^{-1}}z\}\leq\tfrac{1}{4}\big|u-1/u\big|^2\cosh^2\max\{d_z\mathcal{A},d_z\mathcal{B}\}.
\end{equation}
At this point consider the M\"{o}bius transformation $\psi$ taking $0$, $\infty$, $\beta0$, $\beta\infty$ to  $1$, $-1$, $w$, $-w$. Then we have
\begin{equation*}\label{Eqn:4:2}
d_{\mathcal{A}}\mathcal{B}=d_{\psi\mathcal{A}}\psi\mathcal{B}=\log|w|=2x_0,
\end{equation*}
where $d_{\mathcal{A}}\mathcal{B}$ denotes the shortest distance between $\mathcal{A}$ and $\mathcal{B}$.  
Since we have $d_{z_1}\mathcal{A}=d_{z_1}\mathcal{B}=x_0$ and $d_{\xi}z_1=d_{\eta\xi\eta^{-1}}z_1$, by the inequalities in (\ref{eqn:4}) and (\ref{eqn:3}) we derive that
\begin{equation}\label{trace_ineq}
\sinh^2\tfrac{1}{2}d_{\xi}z_1  \leq  \tfrac{1}{4}\big|u-1/u\big|^2\cosh^2d_{z_0}\mathcal{A}
 \leq   \tfrac{1}{2}\big|u-1/u\big|^2(1+|bc|).
\end{equation}
Now assume on the contrary that $|\textnormal{trace}^2(\xi)-4|+|\textnormal{trace}(\xi\eta\xi^{-1}\eta^{-1})-2|<1.5937...$. 
Because we have $
d_{\eta\xi\eta^{-1}}z_2\leq d_{\eta\xi\eta^{-1}}z_1=d_{\xi}z_1$ and $d_{\gamma}z_2<1.6068...$ for every $\gamma\in\{\eta,\xi\eta\xi^{-1},\xi^{-1}\eta\xi\}$ by the hypothesis, we get $d_{\gamma}z_2<1.6068...$ for every $\gamma\in\Gamma_{\o}$ by the inequality in (\ref{trace_ineq}) and \fullref{duetoref}. This contradicts with \fullref{thm:4:1}. 
\end{proof}

Notice that all of the computations given in this paper to prove \fullref{thm:4:1} and \fullref{thm:4:2} can be repeated also for a finitely generated purely loxodromic free Kleinian group $\Gamma=\langle\xi_1,\xi_2,\dots,\xi_n\rangle$ satisfying a hypothesis similar to the one in \fullref{thm:4:2}. An analog of the decomposition $\Gamma_{\mathcal{D}}$ defined in (\ref{dJ}) is required. For a fixed $n>2$, let 
$$\Psi^n=\{\xi_i^2,\xi_i^{-2}\co i=1,\dots, n\}\cup\{\xi_i\xi_j\xi_k^{-1}\co i\neq j,\ j\neq k,\ i,j,k=1,\dots,n\}$$ 
and $\Phi_1^n=\Psi_r^n=\Xi\cup\Xi^{-1}$, where $\Xi=\{\xi_i\co i=1,\dots,n\}$ and $\Xi^{-1}=\{\xi_i^{-1}\co i=1,\dots,n\}$. When the group $\Gamma=\langle\xi_1,\xi_2,\dots,\xi_n\rangle$ is geometrically infinite, the following is the relevant decomposition:
\begin{equation}\label{dJFG}
\Gamma=\{1\}\cup\Psi_r^n\cup\bigcup_{\psi\in\Psi^n}J_{\psi}.
\end{equation}
Let us name this decomposition $\Gamma_{\mathcal{D}^n}$. The rest follows again from the Culler-Shalen machinery introduced in \cite{CSParadox} and the solution method for the optimisation problems described in this text, \cite{Y1} and \cite{Y2}. Consider the subset of  isometries
\begin{equation}\label{conjG}
\Gamma_{\jrg}^n=\Phi_1^n\cup\{\xi_i\xi_j\xi_i^{-1}\co i\neq j,\ \ i,j=1,2,\dots,n\}
\end{equation}
of $\Psi_r^n\cup\Psi^n$. We first prove an analog of \fullref{thm:2:1} for $\Gamma_{\mathcal{D}^n}$. We list all of the group--theoretical relations as in \fullref{lem:2:1} for the isometries in $\Gamma_{\jrg}^n$. By \fullref{lem1.2} and the group-theoretical relations, we state analog of \fullref{dispfunc} to list all of the displacement functions $\mathcal{G}^n=\{f_l\}$ for the indices $l=1,2,\dots,2 n (8 n^2-10 n+3)$ for the isometries in $\Gamma_{\jrg}^n$. 

These displacement functions satisfy generalised versions of the properties in (\ref{a}) and (\ref{b}) for the decomposition $\Gamma_{\mathcal{D}^n}$. In other words we can prove statements similar to \fullref{prop:3:1} and \fullref{prop:3:2}. With a suitable enumeration of the isometries in $\Gamma_{\jrg}^n$ as in (\ref{list:1:4}), an analog of \fullref{prop:3:1} for $\Gamma_{\mathcal{D}^n}$ implies that it is enough to compare the values of four functions  
\begin{eqnarray*}
\begin{array}{c}
\dis\frac{1-2(n-1)(x_1^*+(n-1)x_2^*+(n-1)x_3^*)-x^*}{2(n-1)(x_1^*+(n-1)x_2^*+(n-1)x_3^*)+x^*}\cdot\frac{1-x_1^*}{x_1^*}  = \alpha_*,\label{4:1}\\
\dis\frac{1-(4n^2-4n-1)(x_1^*+(n-1)x_2^*+(n-1)x_3^*)-2nx^*}{(4n^2-4n-1)(x_1^*+(n-1)x_2^*+(n-1)x_3^*)+2nx^*}\cdot\frac{1-x_2^*}{x_2^*} \leq \alpha_*, \label{4:2}\\
\dis\frac{1-(4n^2-4n-1)(x_1^*+(n-1)x_2^*+(n-1)x_3^*)-2nx^*}{(4n^2-4n-1)(x_1^*+(n-1)x_2^*+(n-1)x_3^*)+2nx^*}\cdot\frac{1-x_3^*}{x_3^*} \leq \alpha_*, \label{4:3}\\
\dis\frac{1-(2n-1)(2(n-1)(x_1^*+(n-1)x_2^*+(n-1)x_3^*)+x_{2(n-1)(2n-1)+1}^*)}{(2n-1)(2(n-1)(x_1^*+(n-1)x_2^*+(n-1)x_3^*)+x_{2(n-1)(2n-1)+1}^*)}\cdot\frac{1-x_{2(n-1)(2n-1)+1}^*}{x_{2(n-1)(2n-1)+1}^*} \leq \alpha_*,\label{4:4}
\end{array}
\end{eqnarray*}
where $\alpha_*$ is the infimum of the maximum of the displacement functions in $\mathcal{G}^n$ on the simplex $\Delta^{(2n-1)^3}$. 
Using analog of \fullref{prop:3:2} for $\Gamma_{\mathcal{D}^n}$ and 
the computations given in \fullref{thm:3:1} and \fullref{thm:3:2}, we can prove the following generalisation of \fullref{thm:4:1}:
\begin{conjecture}\label{conj:4:1}
Let $\Xi=\{\xi_1,\xi_2,\dots,\xi_n\}$ for $n>2$ be a set of non-commuting isometries of $\hyp$ and $\Xi^{-1}=\{\xi_1^{-1},\xi_2^{-1},\dots,\xi_n^{-1}\}$. Suppose that $\Gamma=\langle\xi_1,\xi_2,\dots,\xi_n\rangle$ is a purely loxodromic free Kleinian group. Let $\Phi_1^n=\Xi\cup\Xi^{-1}$ and $\Gamma_{\jrg}^n$ be as in (\ref{conjG}).
Then we have $$\max_{\gamma\in\Gamma_{\jrg}^n}d_{\gamma}z\geq\tfrac{1}{2}\log\alpha_n$$ for every $z\in\hyp$. Above $\alpha_n$ is the only real root of the polynomial $p_n(x)$ greater than $(2n-1)^2$, where
\begin{equation*}
\begin{multlined}
p_n(x)=(8n^3-12n^2+2n+1)\ x^4+\\
\shoveleft[2cm]{(-64n^6+192n^5-192n^4+64n^3+4n^2+2n-4)\ x^3}\ +\\
\shoveleft[3.5cm]{(-96n^5+224n^4-168n^3+52n^2-18n+6)\ x^2}\ +\\
\shoveleft[5cm]{(32n^5-112n^4+128n^3-68n^2+22n-4)\ x}\ +\\
\shoveleft[8.8cm]{16 n^4-32 n^3+24 n^2-8 n+1}.
\end{multlined}
\end{equation*}
\end{conjecture}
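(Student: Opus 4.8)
The plan is to reproduce, for the $n$--generator decomposition $\Gamma_{\mathcal{D}^n}$ defined in (\ref{dJFG}), the entire chain of arguments that established \fullref{thm:4:1}, checking at each stage that the combinatorics degenerate to the four--equation system recorded in the statement. First I would dispose of the geometrically infinite case. Exactly as in \fullref{thm:2:1}, the freeness of $\Gamma=\langle\xi_1,\dots,\xi_n\rangle$ lets one decompose the normalised area measure $A_z$ on $S_\infty$ as $\sum_{\psi\in\Psi^n}\nu_\psi$, with each group--theoretical relation $(\gamma,s(\gamma),S(\gamma))$ of $\Gamma_{\mathcal{D}^n}$ translating into the identity $\int_{S_\infty}(\lambda_{\gamma,z})^2\,d\nu_{s(\gamma)}=1-\sum_{\psi\in S(\gamma)}\nu_\psi(S_\infty)$. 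The analog of \fullref{lem:2:1} is the bookkeeping step: one enumerates, for each $\gamma\in\Gamma_{\jrg}^n$ in (\ref{conjG}), all relations obtained by left multiplication, and counts them to total $2n(8n^2-10n+3)$ (returning $60$ at $n=2$). Feeding $a=\nu_{s(\gamma)}(S_\infty)$ and $b=\int_{S_\infty}(\lambda_{\gamma,z})^2\,d\nu_{s(\gamma)}$ into \fullref{lem1.2} then yields the family $\mathcal{G}^n=\{f_l\}$ of displacement lower bounds on the simplex $\Delta^{(2n-1)^3}$, precisely as in \fullref{dispfunc}.

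Second, I would carry out the optimisation $\inf_{\tb{x}}F^n(\tb{x})$, where $F^n=\max\{f:f\in\mathcal{F}^n\}$ is taken over the distinguished subfamily $\mathcal{F}^n$ of $\mathcal{G}^n$ playing the role of $\mathcal{F}=\{f_1,\dots,f_{28}\}$ in \fullref{dispfunc}. The coercivity argument of \fullref{lemtwo} shows the infimum is an attained minimum $\alpha_*$ lying in $[1,\alpha_n]$, since along any sequence approaching $\partial\Delta^{(2n-1)^3}$ some $f_l$ blows up. The decisive structural input is that the functions indexed by the conjugates $\{\xi_i\xi_j\xi_i^{-1}\}$ factor through the two--variable map $g$ of (\ref{g}), so \fullref{convex2} and \fullref{convex3} apply verbatim; I would then replay \fullref{firstfive}, \fullref{unique5}, \fullref{unique6} and \fullref{unique7} to place the minimiser $\xs$ in the common convex domain $C=\bigcap C_{f_l}$ and deduce its uniqueness via \fullref{prop:3:1}. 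The permutation symmetry of $\Gamma_{\jrg}^n$ under relabelling the generators (the analog of the transpositions $\tau_1,\tau_2,\tau_3$ of \fullref{lem:3:3}) forces $\xs$ to be invariant, collapsing its coordinates into the four unknowns $x_1^*,x_2^*,x_3^*,x^*$ of the displayed system. The analog of \fullref{prop:3:2}, proved by exhibiting a descent direction in $T_{\xs}\Delta^{(2n-1)^3}$ through \fullref{flipside}, upgrades the three inequalities to equalities, and solving the system exactly as in \fullref{thm:3:1} (using $\sum_i x_i^*=1$) shows $\alpha_*$ is the root of $p_n(x)$ exceeding $(2n-1)^2$, i.e. $\alpha_*=\alpha_n$.

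Third, I would confirm $\inf G^n=\inf F^n=\alpha_n$ by evaluating every remaining displacement function in $\mathcal{G}^n\setminus\mathcal{F}^n$ at the now explicit point $\xs$ and checking each value is at most $\alpha_n$, as in \fullref{thm:3:2}. The geometrically infinite case of the bound then follows from $\max_{\gamma\in\Gamma_{\jrg}^n}d_\gamma z\ge\tfrac12\log G^n(\tb{m})\ge\tfrac12\log\alpha_n$ with $\tb{m}=(\nu_\psi(S_\infty))_\psi$. Finally, the reduction of the geometrically finite case to the geometrically infinite one is identical to the end of \fullref{thm:4:1}: the displacement maximum $f_z$ is continuous and proper on the character variety, attains its minimum in $\overline{\mathfrak{GF}}-\mathfrak{GF}$, and the density results cited there (\cite[Propositions 8.2 and 9.3]{CSParadox}, \cite[Main Theorem]{CSH}, \cite{CCHS}) transport the bound to all free, parabolic--free $\langle\xi_1,\dots,\xi_n\rangle$.

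The main obstacle is the combinatorial and linear--algebraic bulk of the two middle steps rather than any new idea. Verifying the count $2n(8n^2-10n+3)$ and organising the relations so that the symmetry reduction genuinely collapses $\mathcal{F}^n$ to four representative functions requires a clean, $n$--uniform enumeration of $\Psi^n$; and the analog of the full--rank computation in \fullref{prop:3:2} --- which for $n=2$ was the explicit reduction of a $25\times28$ matrix to $\widetilde{\mathcal{M}}$ and the non--vanishing of the block determinants $\det(\widetilde{\mathcal{M}}_{2,2})$ and $\det(\widetilde{\mathcal{M}}_{4,4})$ --- must be shown to persist with $n$--dependent entries. The delicate point is re--deriving the inequality $x_2^*<x_1^*$ (from $\sigma(x_2^*)=3\sigma(\Sigma_I^4(\xs))\sigma(x_1^*)$ in the $2$--generator argument) in a form that still rules out the degenerate cases $3C_2^1-C_2^2=0$ and $2C_2^1-C_2^2=0$ for every $n$. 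Confirming that the resulting quartic is exactly $p_n(x)$ and that its relevant root exceeds $(2n-1)^2$ is then a finite, if lengthy, verification.
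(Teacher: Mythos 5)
Your proposal reproduces the paper's own outline for \fullref{conj:4:1} step for step: the decomposition $\Gamma_{\mathcal{D}^n}$ of (\ref{dJFG}), the measure-theoretic analog of \fullref{thm:2:1}, the $2n(8n^2-10n+3)$ group--theoretical relations feeding \fullref{lem1.2} to produce $\mathcal{G}^n$ on $\Delta^{(2n-1)^3}$, the convexity and symmetry reduction (analogs of \fullref{prop:3:1}, \fullref{lem:3:3} and \fullref{prop:3:2}) collapsing the optimisation to the displayed four-equation system whose solution gives the root $\alpha_n$ of $p_n(x)$, and the properness-plus-density reduction of the geometrically finite case exactly as at the end of \fullref{thm:4:1}. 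Be aware, though, that the paper records this statement only as a conjecture and explicitly defers the details to future work, so the items you flag as the main obstacles --- the $n$-uniform enumeration, the analog of the full-rank computation in \fullref{prop:3:2}, and the verification that the relevant root of $p_n$ exceeds $(2n-1)^2$ --- are precisely the steps the paper also leaves unverified; as an approach yours coincides with the paper's, but neither constitutes a complete proof.
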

\noindent The proof of \fullref{conj:4:1} goes along the same lines as the proof of \fullref{thm:4:1} when $\Gamma=\langle\xi_1,\xi_2\dots,\xi_n\rangle$ is geometrically finite. 

This conjecture and arguments analogous to the ones presented in the proof of \fullref{thm:4:2} imply the following generalisation of \fullref{thm:4:2}:
\begin{conjecture}\label{conj:4:2}
Let $\Gamma=\langle\xi_1,\xi_2,\dots,\xi_n\rangle$ and $\alpha_n$ be as described in \fullref{conj:4:1}. 
Assume that there exists an isometry $\xi_i$ for $i\neq 1$ so that  $d_{\xi_i\xi_1\xi_i^{-1}}z_2\leq d_{\xi_i\xi_1\xi_i^{-1}}z_1$ and $d_{\gamma}z_2<\tfrac{1}{2}\log\alpha_n$ for every isometry $\gamma\in\Phi_n=\Gamma_{\o}^n-\{\xi_1,\xi_1^{-1},\xi_i^{-1}\xi_1\xi_i,\xi_i^{-1}\xi_1^{-1}\xi_i,\xi_i\xi_1\xi_i^{-1},\xi_i\xi_1^{-1}\xi_i^{-1}\}$, where $z_1$ and $z_2$ are the mid-points of the shortest geodesic segments connecting the axes of $\xi_1$, $\xi_i\xi_1\xi_i^{-1}$ and $\xi_i^{-1}\xi_1\xi_i$, respectively.
Then  we have
\begin{equation*}\label{GJ}
|\textnormal{trace}^2(\xi_1)-4|+|\textnormal{trace}(\xi_1\xi_i\xi_1^{-1}\xi_i^{-1})-2|\geq 2\sinh^2\left(\tfrac{1}{4}\log\alpha_n\right).
\end{equation*}
\end{conjecture}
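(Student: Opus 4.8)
The plan is to transplant, essentially verbatim, the geometric argument behind \fullref{thm:4:2} to the pair $(\xi_1,\,\xi_i\xi_1\xi_i^{-1})$, and to conclude against \fullref{conj:4:1} in place of \fullref{thm:4:1}. First I would conjugate $\Gamma$ so that the axis $\mathcal{A}$ of $\xi_1$ runs from $0$ to $\infty$, write $\xi_1=\mathrm{diag}(u,1/u)$ with $u=|u|e^{i\theta}$, and take $\xi_i$ to be a unimodular matrix with entries $a,b,c,d$. Following the computation of Beardon reproduced in the proof of \fullref{thm:4:2}, this yields
\[
|\tnr{trace}^2(\xi_1)-4|+|\tnr{trace}(\xi_1\xi_i\xi_1^{-1}\xi_i^{-1})-2|=|u-1/u|^2\,(1+|bc|),
\]
together with $\sinh^2(\tfrac12 T_{\xi_1})+\sin^2\theta=\tfrac14|u-1/u|^2$. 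Because $\Gamma$ is free and non--elementary, the axes $\mathcal{A}$ and $\mathcal{B}=\xi_i\mathcal{A}$ of $\xi_1$ and $\xi_i\xi_1\xi_i^{-1}$ share no endpoint, so $bc\neq 0$; solving $bc=(1-w)^2/(4w)$ with $w=\exp 2(x_0+iy_0)$ and $|w|\ge 1$ gives $2|bc|\ge\sinh^2 x_0$, hence $1+|bc|\ge\tfrac12\cosh^2 x_0$, where $2x_0$ is the distance between $\mathcal{A}$ and $\mathcal{B}$.

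Next I would use the displacement identity $\sinh^2\tfrac12 d_{\gamma}z=\sinh^2(\tfrac12 T_{\xi_1})\cosh^2 d_z\mathcal{C}+\sin^2\theta\sinh^2 d_z\mathcal{C}$, applied to $\gamma=\xi_1$ with $\mathcal{C}=\mathcal{A}$ and to $\gamma=\xi_i\xi_1\xi_i^{-1}$ with $\mathcal{C}=\mathcal{B}$; these two conjugates share trace squared, translation length and rotation angle. Evaluating at the midpoint $z_1$ of the common perpendicular of $\mathcal{A}$ and $\mathcal{B}$, where $d_{z_1}\mathcal{A}=d_{z_1}\mathcal{B}=x_0$ and $d_{\xi_1}z_1=d_{\xi_i\xi_1\xi_i^{-1}}z_1$, and inserting the bound on $1+|bc|$, I obtain
\[
2\sinh^2\tfrac12 d_{\xi_1}z_1\le |u-1/u|^2(1+|bc|)=|\tnr{trace}^2(\xi_1)-4|+|\tnr{trace}(\xi_1\xi_i\xi_1^{-1}\xi_i^{-1})-2|.
\]
Arguing by contradiction, I would suppose the asserted inequality fails, so the right--hand side is strictly below $2\sinh^2(\tfrac14\log\alpha_n)$; monotonicity of $\sinh^2$ then forces $d_{\xi_1}z_1<\tfrac12\log\alpha_n$.

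The final step converts the hypothesis at $z_2$ into a uniform bound over all of $\Gamma_{\jrg}^n$, contradicting \fullref{conj:4:1}. The six isometries absent from $\Phi_n$ are treated as follows. The verbatim analogue of \fullref{duetoref} for $(\xi_1,\xi_i)$---whose proof uses only the displacement cylinders of $\xi_1$, $\xi_i\xi_1\xi_i^{-1}$ and $\xi_i^{-1}\xi_1\xi_i$, and so transfers unchanged---gives $d_{\xi_1}z_2<d_{\xi_i\xi_1\xi_i^{-1}}z_2$; the hypothesis $d_{\xi_i\xi_1\xi_i^{-1}}z_2\le d_{\xi_i\xi_1\xi_i^{-1}}z_1=d_{\xi_1}z_1<\tfrac12\log\alpha_n$ then bounds both $\xi_1$ and $\xi_i\xi_1\xi_i^{-1}$, while $z_2$ being the midpoint of the common perpendicular of the axes of $\xi_1$ and $\xi_i^{-1}\xi_1\xi_i$ gives $d_{\xi_i^{-1}\xi_1\xi_i}z_2=d_{\xi_1}z_2$. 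The three remaining members $\xi_1^{-1}$, $\xi_i^{-1}\xi_1^{-1}\xi_i$ and $\xi_i\xi_1^{-1}\xi_i^{-1}$ are paired to the three just bounded by the identities $\tnr{dist}(z,\gamma z)=\tnr{dist}(z,\gamma^{-1}z)$ and $\tnr{dist}(z,\gamma_1\gamma_2 z)=\tnr{dist}(\gamma_1^{-1}z,\gamma_2 z)$ used in \fullref{thm:4:1}. With the hypothesis handling every $\gamma\in\Phi_n$ directly, this shows $\max_{\gamma\in\Gamma_{\jrg}^n}d_{\gamma}z_2<\tfrac12\log\alpha_n$, which is the desired contradiction.

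I expect the genuine obstacle to lie not in this geometric argument---which is a faithful copy of \fullref{thm:4:2}---but in \fullref{conj:4:1}, on which it depends: proving that conjecture demands extending the measure decomposition of \fullref{thm:2:1}, the displacement table of \fullref{dispfunc}, and, most seriously, the convexity and uniqueness analysis of \fullref{prop:3:1} and \fullref{prop:3:2} to the simplex $\Delta^{(2n-1)^3}$, where one must organise the $2n(8n^2-10n+3)$ displacement functions and the corresponding group--theoretical relations. For \fullref{conj:4:2} taken in isolation the only delicate point is the bookkeeping of the last paragraph: one must check that $\Gamma_{\jrg}^n-\Phi_n$ is \emph{exactly} the set reached by the midpoint equalities, the \fullref{duetoref}--type strict inequality, and the order--two symmetries, since a single uncovered displacement would break the contradiction.
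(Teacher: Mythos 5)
Your proposal is correct (conditional, as it must be, on \fullref{conj:4:1}) and takes essentially the same approach as the paper, which itself only sketches this statement by declaring that it follows from \fullref{conj:4:1} together with arguments analogous to the proof of \fullref{thm:4:2}: your transplanted Beardon trace computation for the pair $(\xi_1,\xi_i)$, the midpoint equalities at $z_1$ and $z_2$, the \fullref{duetoref}--type cylinder lemma, and the inversion identities are exactly that argument. Your closing bookkeeping --- verifying that $\Gamma_{\jrg}^n-\Phi_n$ consists precisely of the six isometries reached by those equalities and symmetries --- is the same reduction the paper performs implicitly in the proof of \fullref{thm:4:2}, so nothing in your route genuinely diverges from the author's.
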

\noindent The details of the outlines of the proofs of \fullref{conj:4:1} and \fullref{conj:4:2} given above will be left to future studies.

%
%
%
%

\end{document}